\documentclass[10pt]{extarticle}
\usepackage{graphicx}
\usepackage{amssymb}
\usepackage{amsfonts,amsmath,amsxtra,amsthm,amssymb,latexsym}
\usepackage[cp1251]{inputenc}
\usepackage{color,xcolor,braket,graphicx}

\textwidth14.6cm \textheight21.7cm

\topmargin-0.2cm \evensidemargin1.15cm \oddsidemargin1.15cm \topmargin-0.2cm \headsep0.5cm
\newtheorem{theorem}{Theorem}[section]

\newtheorem{lemma}[theorem]{Lemma}
\newtheorem{corollary}[theorem]{Corollary}
\newtheorem{remark}[theorem]{Remark}

\newcommand{\card}{\mbox{card}}

\makeatletter

\renewcommand{\@biblabel}[1]{#1.}

\makeatother \hfuzz=0.5pt \tolerance=500
\bibliographystyle{plain}

\begin{document}

\begin{center}
{	{ \bf On Optimal Recovery and Information Complexity\\ in Numerical Differentiation and Summation}}

\end{center}

\vspace*{5mm}
\centerline{\textsc { Y.V. Semenova $\!\!{}^{\dag}$, S.G. Solodky  $\!\!{}^{\dag,\ddag}$} }


%
\vspace*{5mm}
\centerline{$\!\!{}^{\dag}\!\!$ Institute of Mathematics, National Academy of Sciences of Ukraine, Kyiv}
\centerline{$\!\!{}^{\ddag}\!\!$ University of Giessen, Department of Mathematics, Giessen, Germany}

\vspace{4mm}
%
%
%
%


\begin{abstract}
{ In this paper, we study optimization problems of numerical differentiation and summation methods on classes of univariate functions. Sharp estimates (in order) of the optimal recovery error and information complexity are calculated for these classes. Algorithms are constructed based on the truncation method and Chebyshev polynomials to implement these estimates. Moreover, we establish under what conditions the summation problem is well-posed.
}
\vspace*{3mm}

{\textit{Key words:} Numerical differentiation, numerical summation, Chebyshev polynomials, truncation method, information complexity, optimal recovery}
\end{abstract}

%

%
%
%

\section{Introduction. Description of the problem}
It is known (see, for example, \cite{MasHand}, \cite {COH2007}) that in solving many problems of mathematical physics and
engineering, it seems convenient to search for a solution in the form of a finite sum over orthogonal polynomials.
In this case, the proposed methods are reduced to recovering   Fourier coefficients
of the solution itself and its derivatives
(see, for instance, \cite{Ar2014}, \cite{Pies1972}).
In turn, this leads to problems of numerical differentiation and summation (NDS).
Generally speaking, these problems are unstable to small perturbations of the input data and require
regularization when constructing approximations. Since the emergence of Tikhonov's regularization theory
in the 60s of the last century, these tasks have attracted the attention of a wide range of researchers.

Among the works devoted to the construction and justification of effective methods of NDS,
we highlight
\cite{Ahn&Choi&Ramm_2006}, \cite{Dolgopolova&Ivanov_USSR_Comput_Math_Math_Phys_1966_Eng}, \cite{Cul71}, \cite{And84},  \cite{EgorKond_1989},   \cite{Groetsch_1992_V74_N2}, \cite{Hanke&Scherzer_2001_V108_N6}, \cite{Lu&Naum&Per},  \cite{Mathe_Per},\, \cite{Meng&Zhaoa&Mei&Zhou_2020},\,
\cite{Nakamura&Wang&Wang_2008},\, \cite{PoPo}, \cite{Qian&Fu&Xiong&Wei_2006}, \cite{Qu96}, \cite{Ramm_1968_No11},  \cite{RammSmir_2001}, \cite{Sol_Shar},\, \cite{VasinVV_1969_V7_N2}, \cite{Wang_Hon_Ch_2006}, \cite{WW2005}, \cite{Zhao_2010}, \cite{Zhao&Meng&Zhao&You&Xie_2016}.
As is known,  in assessing the effectiveness of approximate methods, their optimality plays an important role.
At the same time, in NDS, until recently the optimality of methods was understood only
in the sense of their accuracy.
The point is that the optimal accuracy can be achieved by using different amounts of discrete information.
Therefore, it makes sense to also study information complexity of NDS.
In other words, it is very important to research NDS methods that achieve
optimal accuracy by using the minimal possible amount of discrete input data.
Such input data is often Galerkin information, which is traditionally understood as
a set of values of Fourier coefficients.
It should be noted that in their recent publications, the authors investigated the information complexity
of NDS in cases where inaccurately known values of the Fourier coefficients w.r.t.
trigonometric system \cite{Sol_Stas_JC2020}, \cite{SSS_CMAM}, \cite{Sol_Stas_JC2023},
as well as noisy values of the Fourier-Legendre coefficients \cite{Sem_Sol_2021}, \cite{Sol_Stas_UMZ2022}, \cite{Sem_Sol_ETNA},
were considered as Galerkin information.
Continuing with these studies, this article examines the problem of optimizing NDS methods in a situation where the input data is in the form of perturbed Fourier-Chebyshev coefficients. Moreover, it will be shown that, in the $C$-metric, Chebyshev polynomials provide better accuracy for the differentiation problem compared to Legendre polynomials. In addition, we will establish which parameters affect the stability of the numerical summation problem.

For the further presentation of the material, we need the following notation and concepts.
Let $\{\bar{T}_k(t)\}_{k=0}^\infty$  be the system of Chebyshev polynomials of the first kind as
$$
\bar{T}_k(t) = \cos(k\arccos t)
=\frac{(-1)^k 2^{k} k!}{(2k)!}\sqrt{1-t^2} \frac{d^k}{dt^k}(1-t^2)^{k-1/2}, \quad k\in \mathbb{N}_0 ,
$$
where $\mathbb{N}_0=\{0\}\bigcup\mathbb{N}$.
{We proceed from $\{\bar{T}_k(t)\}$ to an orthonormal system $\{T_k(t)\}$ of Chebyshev polynomials
according to the relations}
$$
T_0(t)=\frac{1}{\sqrt{\pi}} \bar{T}_0(t),\qquad
T_k(t)=\frac{\sqrt{2}}{\sqrt{\pi}} \bar{T}_k(t),\quad k\in \mathbb{N} .
$$
By $L_{2,\omega}$ we mean a weighted Hilbert space of  real-valued functions $f(t)$ which are square-summable on $[-1,1]$ with the weight $\omega(t)=(1-t^2)^{-1/2}$.
In $L_{2,\omega}$ the inner product and norm are introduced in the standard way
$$
\langle f, g\rangle=\int_{-1}^{1} \omega(t) f(t) g(t) \ d t,
$$
$$
\|f\|_{2,\omega}^2 := \int_{-1}^1 \omega(t) |f(t)|^2 \, d t =
\sum_{k=0}^{\infty}|\langle f, T_{k} \rangle|^2 < \infty .
$$
Here $$ \langle f, T_{k}\rangle=\int_{-1}^{1} \omega(t) f (t) T_k(t) dt, \quad
k=0,1,2,\ldots,
$$
are Fourier-Chebyshev coefficients of $f$.
Moreover, let $C$ be the space of continuous on $[-1,1]$ real-valued functions
and $\ell_p$, $1\leq p\leq\infty$, be the vector space of sequences of real numbers
$\overline{x}=\{x_{k}\}_{k\in\mathbb{N}_0}$, such that the corresponding relation
$$
\|\overline{x}\|_{\ell_p}  := \left\{
\begin{array}{cl}
	\bigg(\sum\limits_{k\in\mathbb{N}_0} |x_{k}|^p\bigg)^{\frac{1}{p}} < \infty ,
	\ & 1\leq p<\infty ,
	\\\\
	\sup\limits_{k\in\mathbb{N}_0}  |x_{k}| < \infty ,
	\ & p=\infty ,
\end{array}
\right.
$$
is fulfilled.

We introduce the space of functions
$$
W_{s}^\mu =\{f\in L_{2,\omega}: \quad \|f\|_{s,\mu}^s :=\sum_{k=0}^{\infty} ({\underline{k}})^{s\mu}|\langle f,
T_{k}\rangle|^s<\infty\},
$$
where $\mu>0$,\ $1\le s<\infty$,\ $\underline{k}=\max\{1,k\}$,\ $k=0,1,2,\dots$. Note that in the future we will use
the same notations both for space and for a unite ball from this space: $W_{s}^{\mu} = \{f\in
W_{s}^{\mu}\!: \|f\|_{s,\mu} \leq 1\}$, what we call a class of functions. What exactly is meant by $
W_{s}^{\mu}$, space or class, will be clear depending on the context in each case.

\vskip 4mm

\begin{remark} \label{Tom}

Let $\widehat{W}^r_{2}$, $r=1,2,\ldots$, denote the set of all functions $f(t)$ for which $f, f',\ldots, f^{(r-1)}$
are absolutely continuous on any interval $(\eta_1,\eta_2)$, $\eta_1>-1$, $\eta_2<1$, and the condition
$$\int_{-1}^{1} \omega(t)^{-2r+1} |f^{(r)}(t)|^2 \ d t < \infty $$
is satisfied.
Then from \cite{Tom} it follows that $\widehat{W}^r_{2}$ can be equipped by the norm
$$
\|f\|_{\widehat{W}^r_{2}} := \big(\sum_{j=0}^r \int_{-1}^{1} \omega(t)^{-2j+1} |f^{(j)}(t)|^2 dt\big)^{1/2}
$$
and the following two statements are equivalent:
1) $f\in \widehat{W}^r_{2}$,\, 2) $f\in W^r_{2}$.
\end{remark}

\vskip 4mm

We represent a function $f(t)$ from $W_{s}^{\mu}$ as
\begin{equation}\label{function}
	f(t) =
\sum_{k=0}^{\infty}\langle f, T_{k}\rangle T_k(t),
\end{equation}
and by its derivative of order $r$, $r\in \mathbb{N}$, we mean the following series
\begin{equation}\label{r_deriv}
	f^{(r)}(t) = \sum_{k=r}^{\infty}\langle f, T_{k}\rangle T^{(r)}_k(t).
\end{equation}

We will investigate problems of recovering $f^{(r)}$ using perturbed input data.
Further, we give a strict statement of the problems to be studied.
Suppose that instead of the function $f\in W_s^\mu$ we only know some its approximation $f^\delta \in L_{2,\omega}$
such that $\|f - f^\delta\|_{2,\omega} \le \delta$, $0< \delta <1$.
Let $X=L_{2,\omega}$ or $X=C$. As a {\it solution operator} of the NDS problem, we take any mapping.
$$
m^{(r)} : L_{2,\omega} \to X ,
$$
where the element $m^{(r)} f^\delta \in X$ serves as an approximate solution of the problem.
We denote by ${\mathcal M}$ the set of all mappings from $L_{2,\omega}$ into $X$.
Mappings from ${\mathcal M}$, in general, do not necessarily  require linearity, continuity, or even stability.
Such a general understanding of the solution operator is explained by the desire to compare as wide a range of possible methods of differentiation as possible.

By the {\it optimal recovery error}, we will understand the quantity
$$
E^{(r)}_{\delta} (W^{\mu}_{s}, {\mathcal M}, X) =
\inf\limits_{m^{(r)}\in {\mathcal M}}
\ \sup_{\substack{\|f\|_{s,\mu}\leq 1}}
 \ \sup_{\substack{f^\delta \in L_{2,\omega}, }
 \atop \|f-f^\delta\|_{2,\omega} \leq \delta }
 \| f^{(r)} - m^{(r)} f^{\delta}\|_{X} .
$$
The quantity $E^{(r)}_{\delta}$ characterizes the highest possible accuracy of approximation of
the derivative $f^{(r)}\in W_s^\mu$,
which can be achieved by all possible mappings from $L_{2,\omega}$ into $X$.
A similar quantity is studied for various problems, for example, in \cite{OsJC}.
It answers the question about the minimal approximation error but leaves out the question
of the smallest amount of discrete information needed to achieve a given accuracy.
These kinds of problems are investigated in  the IBC (Information-Based Complexity) theory,
the foundations of which were laid in monographs \cite{TrW} and \cite{TrWW}.

To study the information complexity of the numerical differentiation problem,
we need to introduce additional concepts and definitions into consideration.

Let us now suppose that the error level is measured in the metric of $\ell_p$.
More precisely, we assume that there is a sequence of real numbers $\overline{f^\delta}= \{\langle
f^\delta, T_{k} \rangle\}_{k\in\mathbb{N}_0}$ such that for $\overline{\xi}=
\{\xi_{k}\}_{k\in\mathbb{N}_0}$, where $\xi_{k}=\langle f-f^\delta,T_{k}\rangle$, and for some $1\leq
p\leq \infty$ the relation
\begin{equation}\label{perturbation2}
\|\overline{\xi}\|_{\ell_p} \leq \delta , \quad 0<\delta <1 ,
\end{equation}
is true.

By an {\it information operator}
$$
G: L_{2,\omega} \to \ell_p
$$
we mean a mapping that assigns to any function $f\in L_{2,\omega}$ the set of values of its Fourier-Chebyshev coefficients
\begin{equation}\label{FHc}
G (f) = \left(\langle f, T_{0} \rangle, \langle f, T_{1} \rangle,\ldots, \langle f, T_{n} \rangle, \ldots\right) .
\end{equation}
Note that discrete information in the form of values of Fourier coefficients, for example, of the form (\ref{FHc}),
is usually called {\it Galerkin information}.

By a {\it recovery operator}, we understand any mapping
$$
\psi^{(r)}: \ell_p \to X ,
$$
which assigns to any set of numbers $\overline{f}=(f_0, f_1, \ldots, f_n, \ldots)\in \ell_p$
some element $\psi^{(r)}\overline{f}\in X$.

By a {\it differentiation method}, we mean any operator
\begin{equation}\label{DM}
\psi^{(r)} G: L_{2,\omega} \to X ,
\end{equation}
which has the form of a composition of two mappings: the information operator $G$ and
the recovery operator $\psi^{(r)}$.
The element $\psi^{(r)}G(f^\delta)\in X$ serves as an approximation
to the derivative $f^{(r)}$ of the function $f\in W_s^\mu$.
We denote by $\Psi$ the set of all methods of the form (\ref{DM}).

The only condition of the methods from $\Psi$ is that they use Galerkin information (\ref{FHc}) as discrete information.
In this case, the number of Fourier-Chebyshev coefficients involved can be infinite.

{The error of the method $\psi^{(r)}G$ on the class $W^{\mu}_{s}$ is determined by the quantity
 $$
e_{\delta}(W^{\mu}_{s}, \psi^{(r)}G, X, \ell_p)
 = \sup_{\substack{\|f\|_{s,\mu}\leq 1}}
 \ \sup_{\substack{f^\delta \in L_{2,\omega}: \, \overline{f}^{\delta}=\overline{f}+\overline{\xi}}
 \atop \|\overline{\xi}\|_{\ell_p} \leq \delta }
 \| f^{(r)} - \psi^{(r)}G(f^{\delta}) \|_{X} .
 $$}
We will investigate the quantity
$$
{\cal E}^{(r)}_{\delta} (W^{\mu}_{s}, \Psi, X, \ell_p) =
\inf\limits_{\psi^{(r)}G\in\Psi} e_{\delta}(W^{\mu}_{s}, \psi^{(r)}G, X, \ell_p) ,
$$
which describes the highest possible accuracy that can be achieved
in the numerical differentiation of an arbitrary function $f\in W^{\mu}_{s}$ by methods that use
the values of the Fourier-Chebyshev coefficients, perturbed in the metric $\ell_p$, as discrete information.

In addition, let us consider another quantity that characterizes not only the highest possible accuracy
but also the minimal amount of Galerkin information that ensures this accuracy.
So, on the coordinate axis $[r,\infty)$, we take an arbitrary finite domain $\Omega$, $\card(\Omega) = N$,
where $\card(\Omega)$ means the number of points that make up $\Omega$.

We consider an operator
$$
G_\Omega: L_{2,\omega} \to \ell_p
$$
which operates as follows:
each function $f\in L_{2,\omega}$ is assigned a numerical sequence. The $k$-th component of this sequence coincides with $\langle f, T_k \rangle$, for $k \in \mathbb{N}_0$, if $k \in\Omega$, and it is equal to $0$ otherwise.
Then by algorithm of numerical differentiation, we will understand any operator of the form
$$
\psi^{(r)} G_\Omega: L_{2,\omega} \to X ,
$$

where $\psi^{(r)}: \ell_p \to X$, as before, is a recovery operator.
The set of all algorithms of the form $\psi^{(r)} G_\Omega$ is denoted by $\Psi_\Omega$.
Let $\Psi_N := \cup_{\Omega: \card(\Omega)\le N} \Psi_\Omega$,
where the union is taken over all domains $\Omega$ with $\card(\Omega)\le N$.
It is easy to see that $\Psi_N \subset \Psi$.
The contraction of the set $\Psi_N$ relative to $\Psi$  occurs because
the algorithms in $\Psi_N$ use no more than $N$ values of the Fourier-Chebyshev coefficients.

The {\it minimal radius of the Galerkin information} for the problem of numerical differentiation on the class
$W^{\mu}_{s}$ is given by
$$
R^{(r)}_{N,\delta} (W^{\mu}_{s}, \Psi_N, X, \ell_p) =
\inf\limits_{\psi^{(r)}G_\Omega\in\Psi_N} e_{\delta}(W^{\mu}_{s}, \psi^{(r)}G_\Omega, X, \ell_p) .
$$

The quantity  $R^{(r)}_{N,\delta} (W^{\mu}_{s}, \Psi_N, X, \ell_p)$ describes the highest possible accuracy,
which can be achieved by numerical differentiation of an arbitrary function $f\in W_{s}^{\mu}$ , while using at most  $N$ values of its Fourier-Chebyshev coefficients that are $\delta$-perturbed in the $\ell_p$-metric.
Note that the minimal radius of Galerkin information
for other types of ill-posed problems was studied in
\cite{PS1996}, \cite{Mileiko_Solodkii_2014}. It should be noted that the Galerkin minimal radius characterises the information complexity of the problem under consideration and is traditionally studied within the framework of IBC theory.

Due to the obvious relation $\Psi_N \subset \Psi \subset {\mathcal M}$, for any $N$, $\delta>0$ it is true
\begin{equation}  \label{IV}
R_{N,\delta}^{(r)}(W^{\mu}_{s}, \Psi_N, X, \ell_2)
\geq {\cal E}_{\delta}^{(r)}(W^{\mu}_{s}, \Psi, X, \ell_2)
\geq E_{\delta}^{(r)}(W^{\mu}_{s}, {\mathcal M}, X) ,
\end{equation}
and for any $N$, $\delta>0$ and $2\le p\le \infty$ we have
\begin{equation}  \label{V}
R_{N,\delta}^{(r)}(W^{\mu}_{s}, \Psi_N, X, \ell_p)
\geq {\cal E}_{\delta}^{(r)}(W^{\mu}_{s}, \Psi, X, \ell_p) .
\end{equation}

Our research aims to calculate sharp (in order) estimates of the quantities $R_{N,\delta}^{(r)}$,
$E_{\delta}^{(r)}$, ${\cal E}_{\delta}^{(r)}$, and to construct methods to implement these estimates.

\section{Truncation method. Error estimate in the metric of $C$} \label{up_bound_C}

To solve the problems described above, we propose to use the truncation method.
The essence of this method is to replace the Fourier
series (\ref{r_deriv}) with a finite Fourier sum using perturbed data $\langle f^\delta, T_{k} \rangle$. In the
truncation method to ensure the stability of the approximation and achieve the required order accuracy,  it is
necessary to choose properly the discretization parameter, which here serves as a regularization parameter. So, the
process of regularization in the method under consideration consists in matching the discretization parameter with the
perturbation level $\delta$ of the input data. The simplicity of implementation is one of the important advantages of this method.

Let $r=1,2,\ldots$. For recovering $r$-derivative of functions $f\in W^{\mu}_{s}$ the proposed truncation method has the form
\begin{equation} \label{ModVer}
	\mathcal{D}_N^{(r)} f^\delta(t) = \sum_{k=r}^{N} \langle f^\delta, T_{k}\rangle
	T^{(r)}_k(t)
\end{equation}
with $\Omega=[r,N]$.

The approximation properties of the method (\ref{ModVer}) will be investigated in Sections
\ref{up_bound_C} and \ref{up_bound_L2}.  In Sections \ref{opt_C} and \ref{opt_L2},
it will be established that the method (\ref{ModVer}) is order-optimal in the sense of the quantities
$R_{N,\delta}^{(r)}$, $E_{\delta}^{(r)}$, ${\cal E}_{\delta}^{(r)}$.
Section \ref{num_prob} is devoted to the problem of numerical summation.
In Section \ref{num_exp}, numerical experiments will be provided to confirm the effectiveness of the proposed approach.

Let us write the error of the method (\ref{ModVer}) as
\begin{equation}\label{fullError}
	f^{(r)}(t)-\mathcal{D}_N^{(r)} f^\delta(t)= \left(f^{(r)}(t)-\mathcal{D}_N^{(r)}
	f(t)\right)+\left(\mathcal{D}_N^{(r)} f(t)-\mathcal{D}_N^{(r)} f^\delta(t)\right).
\end{equation}
Here
$$
\mathcal{D}_N^{(r)} f(t) = \sum_{k=r}^{N} \langle f, T_{k}\rangle T^{(r)}_k(t) .
$$
The parameter $N$ in (\ref{ModVer}) should be chosen depending on
$\delta$, $p$, $s$ and $\mu$ so as to minimize the error estimate for $\mathcal{D}_N^{(r)}$.

It is known (see  \cite[p.34]{MasHand}) that
\begin{equation}\label{MH}
	\frac{d}{d t} \bar{T}_{k}(t) \, = 2 \, k
	\mathop{{\sum}^*}\limits_{l=0}^{k-1} \bar{T}_{l}(t) ,
	\ \ \ k\in\mathbb{N} ,
\end{equation}
where in aggregate \quad $ \mathop{{\sum}^*}\limits_{l=0}^{k-1} \bar{T}_{l}(t)$ the summation
is extended over only those terms for which $k+l$ is odd. Moreover, it holds true
\begin{equation}\label{max}
\max_{-1\le t\le 1} |\bar{T}_k(t)| = \bar{T}_k(1) =1, \qquad k\in \mathbb{N}_0 .
\end{equation}
Since in further calculations, we use the orthonormal system $\{T_k\}$, then for convenience we rewrite relations
(\ref{MH}), (\ref{max}) as
\begin{equation}\label{dif_ort}
	\frac{d}{d t} T_{k}(t) \, = 2 \, k
	\mathop{{\sum}^*}\limits_{l=0}^{k-1} \xi_l T_{l}(t) ,
	\quad k\in\mathbb{N} ,\qquad \xi_0=\sqrt{2}, \ \ \xi_l=1,\ \ l\in\mathbb{N} ,
\end{equation}
$$
\max_{-1\le t\le 1} |T_k(t)| = T_k(1) := \left\{
\begin{array}{cl} 1/\sqrt{\pi}, \ & k=0 ,\\\\
\sqrt{2/\pi}, \ & k\in \mathbb{N} .
\end{array}
\right.
$$

In the sequel, we adopt the convention that $c$ denotes a generic positive coefficient,
which can vary from inequality to inequality and does not depend on the values of $\delta$ and $N$.
The notation $A\preceq B$ means $A\le c B$, $A\succeq B$ means $A\ge c B$, $A\asymp B$ means
$A\preceq B\preceq A$.
Moreover, the notation $A\ll B$ means $A=o(B)$, $A\gg B$ means $B=o(A)$.

Let us estimate the error of (\ref{ModVer}) in the metric of $C$. An upper bound for the norm of the first difference
on the right-hand side of (\ref{fullError}) is contained in the following statement.

\begin{lemma}\label{lemma_BoundErrHCC}
	Let $f\in W^\mu_{s}$, $1\leq s< \infty$, $\mu>2r-1/s+1$. Then it holds
	$$
	\|f^{(r)}-\mathcal{D}^{(r)}_N f\|_{C}\leq c\|f\|_{s,\mu} N^{-\mu+2r-1/s+1} .
	$$
\end{lemma}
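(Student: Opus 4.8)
The plan is to start from the identity
$$f^{(r)}(t)-\mathcal{D}_N^{(r)}f(t)=\sum_{k=N+1}^{\infty}\langle f,T_k\rangle\, T_k^{(r)}(t),$$
which follows at once from (\ref{r_deriv}) and the definition of $\mathcal{D}_N^{(r)}f$, and then to estimate the $C$-norm term by term by the triangle inequality. Two ingredients are needed: a uniform bound on $\|T_k^{(r)}\|_C$, and a summation of the resulting tail that uses the hypothesis on $\mu$.

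For the first ingredient I would prove by induction on $r$ that $\|T_k^{(r)}\|_C\le c\,k^{2r}$ for $k\ge 1$, with $c$ depending only on $r$. The base case $r=1$ is immediate from (\ref{dif_ort}): the starred sum has at most $k$ terms, $|\xi_l|\le\sqrt{2}$ and $\|T_l\|_C\le\sqrt{2/\pi}$, so $\|T_k'\|_C\le 2k\cdot k\cdot\sqrt{2}\,\sqrt{2/\pi}\le c\,k^2$. For the inductive step, differentiating (\ref{dif_ort}) $r-1$ times gives $T_k^{(r)}=2k\mathop{{\sum}^*}_{l}\xi_l\,T_l^{(r-1)}$ (with $T_0^{(r-1)}\equiv 0$ when $r\ge 2$), whence $\|T_k^{(r)}\|_C\le 2k\sum_{l=1}^{k-1}|\xi_l|\,\|T_l^{(r-1)}\|_C\le c\,k\sum_{l=1}^{k-1}l^{2(r-1)}\le c\,k^{2r}$. (Alternatively one may invoke the Markov inequality $\|p^{(r)}\|_C\le c\,n^{2r}\|p\|_C$ for polynomials of degree $n$ on $[-1,1]$ together with $\|T_k\|_C\le\sqrt{2/\pi}$; the correct power is also visible from $\bar{T}_k^{(r)}(1)=\prod_{j=0}^{r-1}(k^2-j^2)/(2j+1)\asymp k^{2r}$.)

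With this bound, $\|f^{(r)}-\mathcal{D}_N^{(r)}f\|_C\le c\sum_{k=N+1}^{\infty}k^{2r}|\langle f,T_k\rangle|$. I would then apply Hölder's inequality $\sum_k|a_kb_k|\le\|(a_k)\|_{\ell_s}\|(b_k)\|_{\ell_{s'}}$, $1/s+1/s'=1$, with $a_k=k^{\mu}|\langle f,T_k\rangle|$ and $b_k=k^{2r-\mu}$ (for $s=1$ this is simply pulling out $\sup_{k>N}k^{2r-\mu}$). The first factor, restricted to $k>N$, is at most $\|f\|_{s,\mu}$ by definition of the norm in $W_s^{\mu}$. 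The second factor equals $\big(\sum_{k>N}k^{(2r-\mu)s'}\big)^{1/s'}$, and here the assumption $\mu>2r-1/s+1$ is precisely the condition $(2r-\mu)s'<-1$ that makes this tail converge; estimating the tail of the power series gives $\big(\sum_{k>N}k^{(2r-\mu)s'}\big)^{1/s'}\le c\,N^{2r-\mu+1/s'}=c\,N^{-\mu+2r-1/s+1}$. Multiplying the two bounds yields the asserted inequality.

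The only genuinely delicate step is controlling the growth of $\|T_k^{(r)}\|_C$ by the sharp power $k^{2r}$; the rest is the triangle inequality, Hölder's inequality, and a routine tail estimate for a power series, with the stated restriction on $\mu$ being exactly what guarantees summability of that tail.
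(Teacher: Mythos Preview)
Your argument is correct and essentially the same as the paper's. The paper also starts from $f^{(r)}-\mathcal{D}_N^{(r)}f=\sum_{k>N}\langle f,T_k\rangle T_k^{(r)}$, iterates (\ref{dif_ort}) to expand $T_k^{(r)}$ as a nested starred sum (what you package as an induction), bounds the resulting expression by $c\sum_{k>N}k^{2r}|\langle f,T_k\rangle|$, and then applies H\"older exactly as you do; your alternative appeal to Markov's inequality or to the closed form $\bar{T}_k^{(r)}(1)=\prod_{j=0}^{r-1}(k^2-j^2)/(2j+1)$ is a nice shortcut but not needed.
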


\begin{proof}
Using (\ref{dif_ort}), we write down the representation
$$
f^{(r)}(t)-\mathcal{D}^{(r)}_N f(t) =
\sum_{k=N+1}^\infty \langle f, T_{k} \rangle T_k^{(r)} (t) =
2 \sum_{k=N+1}^\infty k\, \langle f, T_{k} \rangle \mathop{{\sum}^*}\limits_{l_1=0}^{k-1} \xi_{l_1} T_{l_1}^{(r-1)}(t)
$$
\begin{equation}  \label{first_dif}
	= \cdots = 2^r \sum_{k=N+1}^\infty k\, \langle f, T_{k} \rangle
	\mathop{{\sum}^*}\limits_{l_1=r-1}^{k-1} l_1 \mathop{{\sum}^*}\limits_{l_2=r-2}^{l_1-1} l_2 \ldots
	\mathop{{\sum}^*}\limits_{l_{r-1}=1}^{l_{r-2}-1} l_{r-1}
	\mathop{{\sum}^*}\limits_{l_{r}=0}^{l_{r-1}-1} \xi_{l_r} T_{l_r}(t) .
\end{equation}
Now we can estimate
$$
\|f^{(r)}-\mathcal{D}^{(r)}_N f\|_{C} \leq
2^r \sum_{k=N+1}^\infty k\, |\langle f, T_{k} \rangle |
\mathop{{\sum}}\limits_{l_1=r-1}^{k-1} l_1 \mathop{{\sum}}\limits_{l_2=r-2}^{l_1-1} l_2 \ldots
\mathop{{\sum}}\limits_{l_{r-1}=1}^{l_{r-2}-1} l_{r-1}
\mathop{{\sum}}\limits_{l_r=0}^{l_{r-1}-1} \xi_{l_r} \|T_{l_r}\|_C
$$
$$
\leq
\frac{2^{r+1/2}}{\sqrt{\pi}} \sum_{k=N+1}^\infty k\, |\langle f, T_{k} \rangle |
\mathop{{\sum}}\limits_{l_1=r-1}^{k-1} l_1 \mathop{{\sum}}\limits_{l_2=r-2}^{l_1-1} l_2 \ldots
\mathop{{\sum}}\limits_{l_{r-1}=1}^{l_{r-2}-1} l_{r-1}^2
\leq
c \sum_{k=N+1}^\infty k^{2r}\, |\langle f, T_{k} \rangle | .
$$
Let's start with the case $1< s< \infty$. Then using the H\"{o}lder inequality and the definition of
$W^\mu_{s}$, we continue for $\mu>2r-1/s+1$
$$
\|f^{(r)}-\mathcal{D}^{(r)}_N f\|_{C}\leq
c \sum_{k=N+1}^{\infty}\ k^{2r-\mu}\, k^\mu\, |\langle f, T_{k} \rangle |
$$
$$
\leq c\, \left(\sum_{k=N+1}^{\infty}\ k^{s\mu}\,|\langle f, T_{k} \rangle |^s\right)^{1/s}
\left(\sum_{k=N+1}^{\infty}\ k^{(2r-\mu)\frac{s}{s-1}}\right)^{(s-1)/s}
$$
$$
\leq c \|f\|_{s,\mu}  N^{-\mu+2r-1/s+1} .
$$
In the case of $s=1$ the assertion of Lemma is proved similarly. \vspace{0.1in}
\end{proof}

The following statement contains an estimate for the second difference on the right-hand side of (\ref{fullError}) in
the metric of $C$.
\begin{lemma}\label{lemma_BoundPertHCC}
	Let the condition (\ref{perturbation2}) be satisfied for $1\le p\le \infty$.  Then for arbitrary function $f\in L_{2,\omega}$ it holds
	$$
	\|\mathcal{D}^{(r)}_N f - \mathcal{D}^{(r)}_N f^\delta\|_{C} \leq c \delta N^{2r-1/p+1} .
	$$
\end{lemma}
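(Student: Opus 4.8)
The plan is to follow the proof of Lemma~\ref{lemma_BoundErrHCC} almost verbatim, with two adjustments: the series is now a finite sum over $k\in[r,N]$, and the coefficients $\langle f-f^\delta, T_k\rangle$ are controlled through the perturbation bound (\ref{perturbation2}) rather than through the smoothness of $f$. First I would note
$$
\mathcal{D}^{(r)}_N f(t) - \mathcal{D}^{(r)}_N f^\delta(t) = \sum_{k=r}^{N} \langle f-f^\delta, T_{k}\rangle\, T_k^{(r)}(t),
$$
and then iterate the differentiation formula (\ref{dif_ort}) exactly $r$ times on each $T_k^{(r)}$, exactly as in the chain leading to (\ref{first_dif}). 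Passing to the $C$-norm, discarding the parity restriction on the inner sums, using $\|T_{l}\|_C\le\sqrt{2/\pi}$ and collapsing the nested arithmetic sums (for $r=1$ this is $\|T_k'\|_C\preceq k\cdot k$, and each further differentiation raises the exponent by $2$) gives $\|T_k^{(r)}\|_C\preceq k^{2r}$, hence
$$
\|\mathcal{D}^{(r)}_N f - \mathcal{D}^{(r)}_N f^\delta\|_C \le c \sum_{k=r}^{N} k^{2r}\, |\langle f-f^\delta, T_{k}\rangle| .
$$

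It then remains to estimate this last sum by means of (\ref{perturbation2}). For $1<p<\infty$ I would apply the H\"older inequality with exponents $p$ and $p'=p/(p-1)$:
$$
\sum_{k=r}^{N} k^{2r}|\langle f-f^\delta, T_{k}\rangle| \le \Big(\sum_{k=r}^{N}|\langle f-f^\delta, T_{k}\rangle|^p\Big)^{1/p}\Big(\sum_{k=r}^{N} k^{2rp'}\Big)^{1/p'} \le \delta\Big(\sum_{k=r}^{N} k^{2rp'}\Big)^{1/p'} \preceq \delta\, N^{2r+1/p'} = \delta\, N^{2r-1/p+1},
$$
where I used $\sum_{k=r}^{N} k^{2rp'}\asymp N^{2rp'+1}$ and $1/p'=1-1/p$. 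The endpoints are handled directly: for $p=1$ one bounds $k^{2r}\le N^{2r}$ and uses $\sum_{k=r}^N |\langle f-f^\delta, T_{k}\rangle|\le\delta$; for $p=\infty$ one bounds $|\langle f-f^\delta, T_{k}\rangle|\le\delta$ and uses $\sum_{k=r}^{N} k^{2r}\asymp N^{2r+1}$. In every case the exponent equals $2r-1/p+1$, which is the assertion.

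There is no real obstacle here, since the computation is structurally identical to the one already performed for Lemma~\ref{lemma_BoundErrHCC}. The only two points that need mild care are the bookkeeping of the $r$ nested sums that yields $\|T_k^{(r)}\|_C\preceq k^{2r}$ — which may simply be quoted from the estimate preceding (\ref{first_dif}), now applied to a single fixed index $k$ instead of being summed against $\langle f,T_k\rangle$ — and the correct reading of the exponent $1/p$ at the boundary values $p=1,\infty$, which is why those two cases are best treated separately.
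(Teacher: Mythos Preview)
Your proposal is correct and follows essentially the same approach as the paper: the paper also expands $\mathcal{D}^{(r)}_N f-\mathcal{D}^{(r)}_N f^\delta$ via the iterated formula (\ref{dif_ort}) (this is the representation (\ref{second_dif})), collapses the nested sums to obtain $\|\mathcal{D}^{(r)}_N f-\mathcal{D}^{(r)}_N f^\delta\|_C\le c\sum_{k=r}^N k^{2r}|\langle f-f^\delta,T_k\rangle|$, applies H\"older for $1<p<\infty$, and remarks that the endpoint cases are similar.
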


\begin{proof}
Using (\ref{dif_ort}), we write down the representation
$$
\mathcal{D}^{(r)}_N f(t) - \mathcal{D}^{(r)}_N f^\delta(t)
= \sum_{k=r}^N  \langle f-f^\delta, T_{k} \rangle T_{k}^{(r)}(t)
= 2 \sum_{k=r}^N k\, \langle f-f^\delta, T_{k} \rangle
\mathop{{\sum}^*}\limits_{l_1=0}^{k-1} \xi_{l_1} T_{l_1}^{(r-1)}(t)
$$
\begin{equation}  \label{second_dif}
	= \cdots = 2^r \sum_{k=r}^N k\, \langle f-f^\delta, T_{k} \rangle
	\mathop{{\sum}^*}\limits_{l_1=r-1}^{k-1} l_1 \mathop{{\sum}^*}\limits_{l_2=r-2}^{l_1-1} l_2 \ldots
	\mathop{{\sum}^*}\limits_{l_{r-1}=1}^{l_{r-2}-1} l_{r-1}
	\mathop{{\sum}^*}\limits_{l_r=0}^{l_{r-1}-1} \xi_{l_r} T_{l_r}(t) .
\end{equation}
Let $1< p<\infty$ first. Then, using the H\"{o}lder inequality, we find
$$
\|\mathcal{D}^{(r)}_N f - \mathcal{D}^{(r)}_N f^\delta\|_{C}
\leq \frac{2^{r+1/2}}{\sqrt{\pi}} \sum_{k=r}^N k\, |\langle f-f^\delta, T_{k} \rangle |
\mathop{{\sum}^*}\limits_{l_1=r-1}^{k-1} l_1 \mathop{{\sum}^*}\limits_{l_2=r-2}^{l_1-1} l_2 \ldots
\mathop{{\sum}^*}\limits_{l_{r-1}=1}^{l_{r-2}-1} l_{r-1}^2
$$
$$
\leq c\, \sum_{k=r}^N k^{2r}\, |\langle f-f^\delta, T_{k} \rangle |
\leq c\, \left(\sum_{k=r}^{N}\,|\xi_k|^p\right)^{1/p}
\left(\sum_{k=r}^{N}\ k^{\frac{2rp}{p-1}}\right)^{(p-1)/p}
$$
$$
\leq c \delta N^{2r-1/p+1} ,
$$
which was required to prove.

In the cases of $p=1$ and $p=\infty$, the assertion of Lemma is proved similarly. \vspace{0.1in}
\end{proof}

The combination of Lemmas \ref{lemma_BoundErrHCC} and \ref{lemma_BoundPertHCC} gives
\begin{theorem} \label{Th2}
	Let $f\in W^\mu_{s}$, $1\leq s< \infty$, $\mu>2r-1/s+1$, and let the condition (\ref{perturbation2})
	be satisfied for $1\le p\le \infty$. Then for $N\asymp \delta^{-\frac{1}{\mu-1/p+1/s}}$ it
	holds
	$$
	\|f^{(r)}-\mathcal{D}^{(r)}_N f^\delta\|_{C} \leq c \delta^{\frac{\mu-2r+1/s-1}{\mu-1/p+1/s}}  .
	$$
\end{theorem}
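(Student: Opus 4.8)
The plan is to combine the two previously established lemmas via the triangle inequality applied to the error decomposition (\ref{fullError}) and then optimize the free parameter $N$. From Lemma \ref{lemma_BoundErrHCC} we have the approximation (truncation) error bound $\|f^{(r)}-\mathcal{D}^{(r)}_N f\|_C \le c\|f\|_{s,\mu} N^{-\mu+2r-1/s+1} \le cN^{-\mu+2r-1/s+1}$ (using $\|f\|_{s,\mu}\le 1$), and from Lemma \ref{lemma_BoundPertHCC} we have the perturbation (stability) error bound $\|\mathcal{D}^{(r)}_N f - \mathcal{D}^{(r)}_N f^\delta\|_C \le c\delta N^{2r-1/p+1}$. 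Hence
$$
\|f^{(r)}-\mathcal{D}^{(r)}_N f^\delta\|_C \le c\left(N^{-\mu+2r-1/s+1} + \delta N^{2r-1/p+1}\right).
$$

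The next step is to balance the two terms. The first term decreases in $N$ (since $\mu>2r-1/s+1$ makes the exponent negative) while the second increases in $N$; equating them, $N^{-\mu+2r-1/s+1} = \delta N^{2r-1/p+1}$, gives $N^{-\mu+1/p-1/s} = \delta$, i.e. $N \asymp \delta^{-1/(\mu-1/p+1/s)}$, which is exactly the prescribed choice. Substituting this $N$ into the first term yields the exponent $(-\mu+2r-1/s+1)\cdot(-1/(\mu-1/p+1/s)) = (\mu-2r+1/s-1)/(\mu-1/p+1/s)$, and substituting into the second term gives the same power of $\delta$ after a short computation; so both terms are of order $\delta^{(\mu-2r+1/s-1)/(\mu-1/p+1/s)}$, which gives the claimed estimate.

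One should also check that the chosen $N$ is a legitimate (large) integer: since $0<\delta<1$ and the exponent $1/(\mu-1/p+1/s)$ is positive, $N\to\infty$ as $\delta\to 0$, so for small $\delta$ the value is well-defined and $\ge r$ as required for the method (\ref{ModVer}) to make sense; for $\delta$ bounded away from $0$ the estimate holds trivially by adjusting the constant $c$. I do not anticipate any real obstacle here — the argument is a routine triangle inequality plus parameter optimization, and the only mild care needed is in verifying that the two exponents of $\delta$ coincide under the chosen $N$ and that the hypothesis $\mu>2r-1/s+1$ (inherited from Lemma \ref{lemma_BoundErrHCC}) guarantees a genuine trade-off (positive numerator $\mu-2r+1/s-1>0$, so the error indeed tends to zero).
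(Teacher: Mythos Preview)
Your proof is correct and follows essentially the same route as the paper: triangle inequality on the decomposition (\ref{fullError}), invoke Lemmas \ref{lemma_BoundErrHCC} and \ref{lemma_BoundPertHCC}, then substitute the balancing choice $N\asymp\delta^{-1/(\mu-1/p+1/s)}$. The only cosmetic difference is that the paper factors the sum as $cN^{2r+1}(N^{-\mu-1/s}+\delta N^{-1/p})$ before substituting, whereas you explicitly equate the two terms to motivate the choice of $N$; the substance is identical.
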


\begin{proof}
Taking into account  Lemmas \ref{lemma_BoundErrHC}, \ref{lemma_BoundPertHC},  from (\ref{fullError})  we get
$$
\|f^{(r)} - \mathcal{D}^{(r)}_N f^\delta\|_{C} \leq \|f^{(r)}-\mathcal{D}_N^{(r)} f\|_{C} +
\|\mathcal{D}^{(r)}_N f - \mathcal{D}^{(r)}_N f^\delta\|_{C}
$$
$$
\leq c\, \|f\|_{s,\mu} N^{-\mu+2r-1/s+1} + c\, \delta N^{2r-1/p+1}
= c\, N^{2r+1} \left(N^{-\mu-1/s} + \delta N^{-1/p}\right) .
$$
Substituting the rule $N\asymp \delta^{-\frac{1}{\mu-1/p+1/s}}$ into the relation above completely proves Theorem.
\end{proof}
\vskip 2mm

\begin{corollary} \label{Cor2}
In the considered problem, the truncation method  $\mathcal{D}^{(r)}_{N}$ (\ref{ModVer})
	achieves the accuracy
	$$
	O\Big(\delta^{\frac{\mu-2r+1/s-1}{\mu-1/p+1/s}}\Big)
	$$
	on the class $W^{\mu}_{s}$, $\mu>2r-1/s+1$, and requires
	$$
	\card([r,N]) \asymp
	N \asymp \delta^{-\frac{1}{\mu-1/p+1/s}}
	$$
	perturbed Fourier-Chebyshev coefficients.
\end{corollary}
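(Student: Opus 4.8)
The plan is to derive this corollary directly from Theorem~\ref{Th2}, supplemented only by a count of how many Fourier--Chebyshev coefficients the method $\mathcal{D}^{(r)}_N$ from (\ref{ModVer}) actually consumes. First I would pin down the admissible parameter range so that all the exponents make sense: since $r\ge 1$ and $\mu>2r-1/s+1$, one has $\mu>3-1/s\ge 2$, while $0<1/p\le 1$, so the denominator $\mu-1/p+1/s$ occurring in the exponents is strictly positive. Consequently the prescribed discretization level $N\asymp\delta^{-1/(\mu-1/p+1/s)}$ is well defined and tends to infinity as $\delta\to 0$, and the numerator $\mu-2r+1/s-1=\mu-(2r-1/s+1)$ is positive by hypothesis, so $O\big(\delta^{(\mu-2r+1/s-1)/(\mu-1/p+1/s)}\big)$ is genuinely a convergence rate.

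Next I would invoke Theorem~\ref{Th2}: for $f\in W^\mu_{s}$, under condition (\ref{perturbation2}) with $1\le p\le\infty$, and with $N\asymp\delta^{-1/(\mu-1/p+1/s)}$ (rounded to the nearest integer, which does not affect the order), it holds
$$
\|f^{(r)}-\mathcal{D}^{(r)}_N f^\delta\|_{C}\le c\,\delta^{\frac{\mu-2r+1/s-1}{\mu-1/p+1/s}}
$$
uniformly over the unit ball $\|f\|_{s,\mu}\le 1$. This is precisely the asserted accuracy of $\mathcal{D}^{(r)}_{N}$ on the class $W^{\mu}_{s}$.

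It then remains to count the information used. By the very form of (\ref{ModVer}), the operator $\mathcal{D}^{(r)}_N$ is assembled from the perturbed coefficients $\langle f^\delta,T_k\rangle$ with indices $k$ running over $\Omega=[r,N]$ only; hence it employs exactly $\card([r,N])=N-r+1$ Fourier--Chebyshev coefficients. Since $r$ is a fixed natural number and $N\to\infty$ as $\delta\to 0$, we get $\card([r,N])=N-r+1\asymp N\asymp\delta^{-1/(\mu-1/p+1/s)}$, which closes the argument.

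I do not expect a real obstacle here, as the statement is essentially a bookkeeping consequence of Theorem~\ref{Th2}. The only points demanding a line of care are verifying that $\mu-1/p+1/s>0$ (so that $N$ is meaningfully defined and grows), which follows from the standing assumption $\mu>2r-1/s+1$ with $r\ge 1$, and noting that integer rounding of $N$ together with replacing $N-r+1$ by $N$ leaves the orders unchanged.
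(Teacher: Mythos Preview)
Your proposal is correct and matches the paper's approach: the paper states Corollary~\ref{Cor2} immediately after Theorem~\ref{Th2} without a separate proof, treating it as a direct consequence of that theorem together with the evident count $\card([r,N])=N-r+1\asymp N$. Your additional checks that $\mu-1/p+1/s>0$ and that rounding does not affect the order are fine and make the reasoning explicit.
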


\vskip 2mm


\vskip 2mm

\section{Error estimate for truncation method in $L_{2,\omega}$--metric}   \label{up_bound_L2}

Now we have to bound the error of the method (\ref{ModVer}) in the metric of $L_{2,\omega}$.
An upper estimate for the norm
of the first difference on the right-hand side of (\ref{fullError}) is contained in the following statement.

\begin{lemma}\label{lemma_BoundErrHC}
	Let $f\in W^\mu_{s}$, $1\leq s< \infty$, $\mu>2r-1/s+1/2$. Then it holds
	$$
	\|f^{(r)}-\mathcal{D}_N^{(r)} f\|_{2,\omega} \leq c\|f\|_{s,\mu} N^{-\mu+2r-1/s+1/2} .
	$$
\end{lemma}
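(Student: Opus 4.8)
The plan is to follow the proof of Lemma~\ref{lemma_BoundErrHCC} almost verbatim; the only new ingredient is that in the $L_{2,\omega}$-metric one should exploit the orthonormality of $\{T_l\}$ instead of the pointwise bound $\|T_l\|_C\asymp 1$, and this is exactly what turns the exponent $-\mu+2r-1/s+1$ into $-\mu+2r-1/s+1/2$. Concretely, the key estimate to establish is
$$
\|T_k^{(r)}\|_{2,\omega}\le c\,k^{2r-1/2},\qquad k\ge r .
$$
Expanding $T_k^{(r)}$ by iterating (\ref{dif_ort}) exactly as in (\ref{first_dif}) gives $T_k^{(r)}=\sum_{l}a_{k,l}T_l$ with $a_{k,l}=\langle T_k^{(r)},T_l\rangle$, so by orthonormality $\|T_k^{(r)}\|_{2,\omega}^2=\sum_{l}a_{k,l}^2\le\big(\max_{l}|a_{k,l}|\big)\sum_{l}|a_{k,l}|$. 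The sum $\sum_{l}|a_{k,l}|$ is the same nested sum already estimated in the proof of Lemma~\ref{lemma_BoundErrHCC} (up to the harmless factor $\|T_{l_r}\|_C\asymp 1$), so $\sum_{l}|a_{k,l}|\preceq k^{2r}$; and $\max_{l}|a_{k,l}|$ is bounded by the same chain-counting argument but with the last summation (over $l$) omitted: dropping the parity restrictions, bounding each intermediate index by $k$, and using that the number of decreasing chains $k>l_1>\cdots>l_{r-1}>l$ does not exceed $\binom{k}{r-1}\le k^{r-1}$, one gets $\max_{l}|a_{k,l}|\preceq k\cdot k^{r-1}\cdot k^{r-1}=k^{2r-1}$, uniformly in $l$. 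Hence $\|T_k^{(r)}\|_{2,\omega}^2\preceq k^{2r}\cdot k^{2r-1}=k^{4r-1}$, which is the claim.

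With this in hand I would write $f^{(r)}-\mathcal{D}_N^{(r)}f=\sum_{k=N+1}^{\infty}\langle f,T_k\rangle T_k^{(r)}$, apply the triangle inequality in $L_{2,\omega}$ together with the bound above, and obtain
$$
\|f^{(r)}-\mathcal{D}_N^{(r)}f\|_{2,\omega}\le c\sum_{k=N+1}^{\infty}k^{2r-1/2}\,|\langle f,T_k\rangle| .
$$
For $1<s<\infty$ I would then apply H\"older's inequality with exponents $s$ and $s/(s-1)$, splitting $k^{2r-1/2}=k^{2r-1/2-\mu}\cdot k^{\mu}$ and using the definition of $W_s^\mu$: the tail series $\sum_{k>N}k^{(2r-1/2-\mu)s/(s-1)}$ converges exactly when $\mu>2r-1/s+1/2$, and the resulting H\"older factor is of order $N^{(s-1)/s-(\mu-2r+1/2)}=N^{-\mu+2r-1/s+1/2}$, which is the assertion. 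The case $s=1$ follows directly from $\sum_{k>N}k^{2r-1/2}|\langle f,T_k\rangle|\le\big(\sup_{k>N}k^{2r-1/2-\mu}\big)\sum_{k>N}k^\mu|\langle f,T_k\rangle|\le c\,N^{2r-1/2-\mu}\|f\|_{1,\mu}$, which again requires $\mu>2r-1/2=2r-1/s+1/2$.

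The only step that needs genuine care is the sharp bound $\|T_k^{(r)}\|_{2,\omega}\preceq k^{2r-1/2}$; the half-power gained there compared with $\|T_k^{(r)}\|_C\asymp k^{2r}$ comes from the fact that $T_k^{(r)}$ is a polynomial of degree $k-r$, hence has only $O(k)$ nonzero Fourier--Chebyshev coefficients, each of size $O(k^{2r-1})$, and it is the interplay of these two facts — the $\max_l\times\sum_l$ bookkeeping above, with all constants independent of $N$ — that is the heart of the matter. Everything after that is the same H\"older computation as in the $C$-metric case, with $2r$ replaced by $2r-1/2$.
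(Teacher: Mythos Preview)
Your proof is correct, but it takes a genuinely different route from the paper's. The paper does not pass through the estimate $\|T_k^{(r)}\|_{2,\omega}\preceq k^{2r-1/2}$ or the triangle inequality in $L_{2,\omega}$; instead it reorders the nested sum (\ref{first_dif}) so as to read off the Fourier--Chebyshev coefficients of the tail directly,
\[
f^{(r)}-\mathcal{D}_N^{(r)}f=2^r\sum_{l_r}\xi_{l_r}T_{l_r}\cdot\Big(\text{inner sum over }k\Big),
\]
splits the outer sum at $l_r\approx N$, applies Parseval to get a sum of squares of these coefficients, and only then uses H\"older on each inner sum. Your approach is more elementary and more modular: once the single-term bound $\|T_k^{(r)}\|_{2,\omega}\preceq k^{2r-1/2}$ is in hand, the $L_{2,\omega}$ case becomes literally the $C$-metric computation with $2r$ replaced by $2r-\tfrac12$. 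What you lose is only tightness that is not needed here anyway (triangle inequality vs.\ Parseval), while the paper's reordering, though heavier for this lemma, is exactly the representation that gets reused for the lower bounds in Section~\ref{opt_L2} (e.g.\ estimating $\|f_1^{(r)}\|_{2,\omega}$ from below in Theorem~\ref{Th5.1}). Your derivation of $\max_l|a_{k,l}|\preceq k^{2r-1}$ via chain counting and $\sum_l|a_{k,l}|\preceq k^{2r}$ via the $C$-norm computation is correct; note that the coefficients $a_{k,l}$ are nonnegative, so the $\max\times\sum$ bound is immediate.
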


\begin{proof}
We take the presentation (\ref{first_dif}) for the first difference of (\ref{fullError})
$$
f^{(r)}(t)-\mathcal{D}^{(r)}_N f(t)
$$
$$
= 2^r \sum_{k=N+1}^\infty k\, \langle f, T_{k} \rangle
\mathop{{\sum}^*}\limits_{l_1=r-1}^{k-1} l_1 \mathop{{\sum}^*}\limits_{l_2=r-2}^{l_1-1} l_2 \cdots
\mathop{{\sum}^*}\limits_{l_r=0}^{l_{r-1}-1} \xi_{l_r} T_{l_r}(t)
$$
and will change the order of summation in it.
Recall that  in this representation only those terms  take place for which all  indexes
$l_1+k, l_2+l_1,...,l_r+l_{r-1}$ are odd . Such rule valid also for the term
$\mathcal{D}_N^{(r)} f-\mathcal{D}_N^{(r)} f^\delta$ (see (\ref{fullError})) and the right-hand side of relation (\ref{RHS}).
In the following, for simplicity, we will omit the symbol "*"\, when denoting such summation operations, while taking into account this rule in the calculations.

Using the formula (\ref{first_dif}), we have
$$
f^{(r)}(t)-\mathcal{D}^{(r)}_N f(t)
= 2^r\, \mathop{{\sum}}\limits_{l_{r}=0}^{N-r+1} \xi_{l_r} T_{l_r}(t)
\sum_{k=N+1}^\infty k\, \langle f, T_{k} \rangle B^r_{k}
$$
$$
+ 2^r\, \mathop{{\sum}}\limits_{l_{r}=N-r+2}^{\infty} T_{l_r}(t)
\sum_{k=l_r+r}^\infty k\, \langle f, T_{k} \rangle B^r_{k} ,
$$
where
$$
B^r_{k}:=\mathop{{\sum}}\limits_{l_1=l_r+r-1}^{k-1} l_1 \mathop{{\sum}}\limits_{l_2=l_r+r-2}^{l_1-1}
l_2 \ldots \mathop{{\sum}}\limits_{l_{r-2}=l_r+2}^{l_{r-3}-1}  l_{r-2}\,
\mathop{{\sum}}\limits_{l_{r-1}=l_r+1}^{l_{r-2}-1}  l_{r-1} ,
$$
\begin{equation}   \label{Br}
B^r_{k} = c^* k^{2r-2}
\end{equation}
and the factor $c^*$ does not depend on $N$, $k$.

At first, we consider the case $1<s<\infty.$
Using H\"{o}lder inequality  and the definition of $W^\mu_{s}$,  for $\mu>2r-1/s+1/2$ we get
$$
\|f^{(r)}-\mathcal{D}^{(r)}_N f\|^2_{2,\omega}
\le c\, \mathop{{\sum}}\limits_{l_{r}=0}^{N-r+1}
\left(\sum_{k=N+1}^\infty k\, \langle f, T_{k} \rangle B^r_{k}\right)^2
$$
$$
+ c\, \mathop{{\sum}}\limits_{l_{r}=N-r+2}^{\infty}
\left(\sum_{k=l_r+r}^\infty k\, \langle f, T_{k} \rangle B^r_{k}\right)^2
$$
$$
\leq c  \mathop{{\sum}}\limits_{l_{r}=0}^{N-r+1}
\left(\sum_{k=N+1}^\infty k^{s\mu}\, |\langle f, T_{k} \rangle |^s\right)^{2/s}
\left(\sum_{k=N+1}^\infty k^{(2r-\mu-1)\frac{s}{s-1}}\right)^{2(s-1)/s}
$$
$$
+ c\, \mathop{{\sum}}\limits_{l_{r}=N-r+2}^{\infty}
\left(\sum_{k=l_r+r}^\infty k^{s\mu}\, |\langle f, T_{k} \rangle |^s\right)^{2/s}
\left(\sum_{k=l_r+r}^\infty \, k^{(2r-\mu-1)\frac{s}{s-1}}\right)^{2(s-1)/s}
$$
$$
\leq c \|f\|_{s,\mu}^2 N^{-2(\mu-2r+1)+\frac{2(s-1)}{s}+1}.
$$
Thus, we find
$$
\|f^{(r)}-\mathcal{D}^{(r)}_N f\|_{2,\omega} \leq c \|f\|_{s,\mu} N^{-\mu+2r-1/s+1/2} .
$$

In the case of $s=1$ the assertion of Lemma is proved similarly. \vspace{0.1in}
\end{proof}

The following statement contains an estimate for the second difference from the right-hand side of (\ref{fullError}) in
the metric of $L_{2,\omega}$.

\begin{lemma}\label{lemma_BoundPertHC}
Let the condition (\ref{perturbation2}) be satisfied for $1\le p\le \infty$.
Then for arbitrary function $f\in L_{2,\omega}$ it holds
	$$
	\|\mathcal{D}^{(r)}_N f - \mathcal{D}^{(r)}_N f^\delta\|_{2,\omega} \leq c\delta N^{2r-1/p+1/2} .
	$$
\end{lemma}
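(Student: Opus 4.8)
The plan is to mirror the proof of Lemma~\ref{lemma_BoundErrHC}: start from the representation (\ref{second_dif}) of the second difference, interchange the order of summation so as to collect the coefficient of each $T_{l_r}(t)$, and then exploit the orthonormality of $\{T_k\}$ via Parseval's identity. Concretely, after reordering (keeping in mind, but suppressing, the parity constraint on the nested indices) one writes
$$
\mathcal{D}^{(r)}_N f(t) - \mathcal{D}^{(r)}_N f^\delta(t)
= 2^r \sum_{l_r=0}^{N-r} \xi_{l_r} T_{l_r}(t) \sum_{k=l_r+r}^{N} k\, \langle f-f^\delta, T_k\rangle\, B^r_{k} ,
$$
with $B^r_{k}$ exactly the quantity introduced before (\ref{Br}), so that $B^r_{k} = c^* k^{2r-2}$ with $c^*$ independent of $N$ and $k$. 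Parseval's identity then gives
$$
\|\mathcal{D}^{(r)}_N f - \mathcal{D}^{(r)}_N f^\delta\|_{2,\omega}^2
= 4^r \sum_{l_r=0}^{N-r} \xi_{l_r}^2 \Big(\sum_{k=l_r+r}^{N} k\, \langle f-f^\delta, T_k\rangle\, B^r_{k}\Big)^2 .
$$

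Next I would bound each factor crudely: $\xi_{l_r}^2\le 2$ and $k\,B^r_{k}\le c\,k^{2r-1}$, and then drop the dependence of the inner summation range on $l_r$, which costs only the harmless factor equal to the number of admissible indices $l_r$, namely $O(N)$:
$$
\|\mathcal{D}^{(r)}_N f - \mathcal{D}^{(r)}_N f^\delta\|_{2,\omega}^2
\le c\, N \Big(\sum_{k=r}^{N} k^{2r-1}\,|\langle f-f^\delta, T_k\rangle|\Big)^2 .
$$
For $1<p<\infty$ the inner sum is estimated by H\"older's inequality with exponents $p$ and $p/(p-1)$, using (\ref{perturbation2}):
$$
\sum_{k=r}^{N} k^{2r-1}\,|\langle f-f^\delta, T_k\rangle|
\le \|\overline{\xi}\|_{\ell_p}\Big(\sum_{k=r}^{N} k^{(2r-1)\frac{p}{p-1}}\Big)^{(p-1)/p}
\le c\,\delta\, N^{2r-1/p} .
$$
Substituting back yields $\|\mathcal{D}^{(r)}_N f - \mathcal{D}^{(r)}_N f^\delta\|_{2,\omega}^2 \le c\,\delta^2 N^{4r-2/p+1}$, and taking square roots gives the asserted bound. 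The cases $p=1$ (bound the inner sum by $N^{2r-1}\sum_{k}|\xi_k|\le \delta N^{2r-1}$) and $p=\infty$ (bound it by $\delta\sum_{k=r}^{N} k^{2r-1}\asymp \delta N^{2r}$) are handled the same way and lead to the same exponent $2r-1/p+1/2$.

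The only genuinely delicate point is the interchange of summation together with the verification that the reordered inner coefficient satisfies $B^r_{k}\asymp k^{2r-2}$; but this is precisely the computation already performed in the proof of Lemma~\ref{lemma_BoundErrHC} and recorded in (\ref{Br}), so it can simply be quoted here. Everything else is an application of Parseval and H\"older. I note that the extra factor $N^{1/2}$ relative to the $C$-metric estimate of Lemma~\ref{lemma_BoundPertHCC} arises exactly from the $O(N)$ outer sum over $l_r$ combined with the fact that, in the $L_{2,\omega}$-norm, Parseval replaces the bound $\|T_{l_r}\|_{C}\asymp 1$ used there by the orthonormal normalization.
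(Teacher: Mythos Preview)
Your proof is correct and follows essentially the same approach as the paper: reorder the nested sum in (\ref{second_dif}) to collect the coefficient of each $T_{l_r}$, invoke orthonormality to pass to a sum of squared coefficients, bound $k\,B^r_k\preceq k^{2r-1}$ via (\ref{Br}), and apply H\"older with the $\ell_p$-perturbation bound. The only cosmetic difference is that the paper applies H\"older inside each $l_r$-term before summing over $l_r$, whereas you first replace the inner range $[l_r+r,N]$ by the largest one $[r,N]$ and then multiply by the $O(N)$ count of outer indices; both routes yield the same exponent $4r-2/p+1$ for the squared norm.
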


\begin{proof}
We take the presentation (\ref{second_dif}) for the second difference of (\ref{fullError})
$$
\mathcal{D}^{(r)}_N f(t) - \mathcal{D}^{(r)}_N f^\delta(t)
= 2^r \sum_{k=r}^N k\, \langle f-f^\delta, T_{k} \rangle
\mathop{{\sum}}\limits_{l_1=r-1}^{k-1} l_1 \mathop{{\sum}}\limits_{l_2=r-2}^{l_1-1} l_2 \cdots
\mathop{{\sum}}\limits_{l_r=0}^{l_{r-1}-1} \xi_{l_r} T_{l_r}(t)
$$
and change the order of summation in it.
Recall that in this representation only those terms  take place for which all  indexes
$l_1+k, l_2+l_1,...,l_r+l_{r-1}$ are odd.

Using the formula (\ref{dif_ort}), we have
$$
\mathcal{D}^{(r)}_N f(t) - \mathcal{D}^{(r)}_N f^\delta(t)
= 2^r \mathop{{\sum}}\limits_{l_r=0}^{N-r} \xi_{l_r} T_{l_r}(t)
\, \sum_{k=l_r+r}^N k\, \langle f-f^\delta, T_{k} \rangle B^r_k .
$$
At first, we consider the case $1<p<\infty.$
Using H\"{o}lder inequality, we get
$$
\|\mathcal{D}^{(r)}_N f - \mathcal{D}^{(r)}_N f^\delta\|_{2,\omega}^2
\le \frac{2^{2r+1}}{\pi} \mathop{{\sum}}\limits_{l_r=0}^{N-r}
\, \left(\sum_{k=l_r+r}^N k\, \langle f-f^\delta, T_{k} \rangle B^r_k\right)^2
$$
$$
\leq c\, \mathop{{\sum}}\limits_{l_r=0}^{N-r}
\, \left(\sum_{k=l_r+r}^N |\xi_k|^p\right)^{2/p}
\, \left(\sum_{k=l_r+r}^N k^{(2r-1)\frac{p}{p-1}}\right)^{2(p-1)/p}
$$
$$
\leq c\, \delta^2 N^{2(2r-1)+2(p-1)/p+1} ,
$$
which was required to prove.

In the cases of $p=1$ and $p=\infty$, the assertion of Lemma is proved similarly. \vspace{0.1in}
\end{proof}

\vskip 2mm

The combination of Lemmas \ref{lemma_BoundErrHC} and \ref{lemma_BoundPertHC} gives
\begin{theorem} \label{Th1}
	Let $f\in W^\mu_{s}$, $1\leq s< \infty$, $\mu>2r-1/s+1/2$, and let the condition (\ref{perturbation2}) be satisfied
	for $1\le p\le \infty$.  Then for $N\asymp \delta^{-\frac{1}{\mu-1/p+1/s}}$ it holds
	$$
	\|f^{(r)} - \mathcal{D}^{(r)}_N f^\delta\|_{2,\omega} \leq c \delta^{\frac{\mu-2r+1/s-1/2}{\mu-1/p+1/s}} .
	$$
\end{theorem}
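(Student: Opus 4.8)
The plan is to mimic exactly the argument already used for Theorem \ref{Th2}, but now with the $L_{2,\omega}$-metric lemmas in place of the $C$-metric ones. Starting from the decomposition (\ref{fullError}) of the error of $\mathcal{D}^{(r)}_N$ into the approximation term $f^{(r)}-\mathcal{D}^{(r)}_N f$ and the propagated-noise term $\mathcal{D}^{(r)}_N f-\mathcal{D}^{(r)}_N f^\delta$, I would apply the triangle inequality in $\|\cdot\|_{2,\omega}$ and then invoke Lemma \ref{lemma_BoundErrHC} for the first summand and Lemma \ref{lemma_BoundPertHC} for the second. Since $f\in W^\mu_s$ is taken from the unit ball, $\|f\|_{s,\mu}\le 1$, this yields
$$
\|f^{(r)} - \mathcal{D}^{(r)}_N f^\delta\|_{2,\omega}
\le c\, N^{-\mu+2r-1/s+1/2} + c\,\delta\, N^{2r-1/p+1/2}
= c\, N^{2r+1/2}\big(N^{-\mu-1/s} + \delta\, N^{-1/p}\big).
$$

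The next step is the standard balancing of the two competing terms inside the parentheses: the approximation term $N^{-\mu-1/s}$ decreases in $N$, while the noise term $\delta N^{-1/p}$ increases in $N$ (for $p<\infty$; the cases $p=1$ and $p=\infty$ are treated just as in the lemmas). Equating $N^{-\mu-1/s}\asymp \delta N^{-1/p}$ gives $N^{-\mu-1/s+1/p}\asymp\delta$, i.e. the choice $N\asymp\delta^{-1/(\mu-1/p+1/s)}$ stated in the theorem. Substituting this $N$ back, each of the two terms becomes of order $\delta^{(\mu+1/s)/(\mu-1/p+1/s)}$, and multiplying by the prefactor $N^{2r+1/2}\asymp\delta^{-(2r+1/2)/(\mu-1/p+1/s)}$ produces
$$
\delta^{\frac{\mu+1/s-(2r+1/2)}{\mu-1/p+1/s}}=\delta^{\frac{\mu-2r+1/s-1/2}{\mu-1/p+1/s}},
$$
which is exactly the asserted bound.

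I do not anticipate a genuine obstacle here: the only things to check are that the exponent arithmetic comes out right (the prefactor $N^{2r+1/2}$ must be combined correctly with the balanced $\delta$-power) and that the hypothesis $\mu>2r-1/s+1/2$ guarantees a positive exponent in the conclusion and is consistent with the convergence requirements of Lemma \ref{lemma_BoundErrHC}. One should also note $N\ge r$ for $\delta$ small enough, so that $\Omega=[r,N]$ is nonempty and the method (\ref{ModVer}) is well defined; this is automatic since $\mu-1/p+1/s>0$ makes $N\to\infty$ as $\delta\to 0$. The structure of the proof is otherwise identical to that of Theorem \ref{Th2}, so it can be written in a couple of lines.
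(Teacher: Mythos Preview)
Your proposal is correct and is exactly the argument the paper uses: it states Theorem \ref{Th1} as the direct combination of Lemmas \ref{lemma_BoundErrHC} and \ref{lemma_BoundPertHC} via the decomposition (\ref{fullError}), followed by the substitution $N\asymp\delta^{-1/(\mu-1/p+1/s)}$. One small wording slip: after factoring out $N^{2r+1/2}$, the term $\delta N^{-1/p}$ does not increase in $N$; what you really mean (and what makes the balancing work) is that the unfactored terms $N^{-\mu+2r-1/s+1/2}$ and $\delta N^{2r-1/p+1/2}$ are respectively decreasing and increasing in $N$.
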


\vskip 2mm

\begin{corollary} \label{Cor1}
In the considered problem, the truncation method  $\mathcal{D}^{(r)}_{N}$ (\ref{ModVer})
achieves the accuracy
	$$
	O\Big(\delta^{\frac{\mu-2r+1/s-1/2}{\mu-1/p+1/s}}\Big)
	$$
	on the class $W^{\mu}_{s}$, $\mu>2r-1/s+1/2$, and requires
	$$
	\card([r,N]) \asymp
	N \asymp \delta^{-\frac{1}{\mu-1/p+1/s}}
	$$
perturbed Fourier-Chebyshev coefficients.
\end{corollary}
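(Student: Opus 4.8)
The plan is to obtain both assertions directly from Theorem~\ref{Th1} together with the definition (\ref{ModVer}) of the method, so the argument is essentially bookkeeping. The accuracy claim is immediate: under the hypotheses of the corollary --- $f\in W^\mu_{s}$, $1\le s<\infty$, $\mu>2r-1/s+1/2$, and $\|\overline{\xi}\|_{\ell_p}\le\delta$ for some $1\le p\le\infty$ --- Theorem~\ref{Th1} applies with the choice $N\asymp\delta^{-\frac{1}{\mu-1/p+1/s}}$ and gives
$$
\|f^{(r)}-\mathcal{D}^{(r)}_N f^\delta\|_{2,\omega}\le c\,\delta^{\frac{\mu-2r+1/s-1/2}{\mu-1/p+1/s}}
$$
uniformly over the unit ball of $W^\mu_{s}$ and over all admissible perturbations; this is exactly the declared rate $O\big(\delta^{(\mu-2r+1/s-1/2)/(\mu-1/p+1/s)}\big)$.

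For the information count I would argue as follows. By its very definition (\ref{ModVer}), the method $\mathcal{D}^{(r)}_N$ involves only the coefficients $\langle f^\delta,T_k\rangle$ with $r\le k\le N$; that is, $\mathcal{D}^{(r)}_N=\psi^{(r)}G_\Omega$ with $\Omega=[r,N]$, so the number of Fourier--Chebyshev coefficients actually used equals $\card([r,N])=N-r+1$. It then remains only to observe that $r$ is a fixed positive integer while $N\to\infty$ as $\delta\to0$, whence $N-r+1\asymp N\asymp\delta^{-\frac{1}{\mu-1/p+1/s}}$, which is the stated information cost.

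The one point deserving a line of justification --- and the closest thing here to an obstacle --- is that the exponent $\mu-1/p+1/s$ in the rule for $N$ is strictly positive, so that the chosen $N$ indeed grows as $\delta\to0$ (this is what makes the identification $\card([r,N])\asymp N$ meaningful and what Theorem~\ref{Th1} tacitly relies on). Positivity is immediate from the smoothness constraint: $\mu>2r-1/s+1/2$ together with $r\ge1$ and $1/s\le1$ forces $\mu>3/2$, hence $\mu-1/p+1/s>\mu-1>1/2>0$. Apart from this verification, the corollary is purely a reformulation of Theorem~\ref{Th1} in terms of ``accuracy versus amount of Galerkin information'', and no further computation is needed.
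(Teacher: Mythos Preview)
Your argument is correct and matches the paper's approach: the corollary is stated there without a separate proof, as an immediate reformulation of Theorem~\ref{Th1} together with the observation that $\mathcal{D}^{(r)}_N$ uses exactly the coefficients indexed by $[r,N]$. Your extra line verifying $\mu-1/p+1/s>0$ is a welcome clarification that the paper leaves implicit.
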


\vskip 2mm

\section{Optimal recovery and information complexity in uniform metric}  \label{opt_C}

\subsection{General case of error in input data}

First, we will consider the case when, instead of the exact values of the Fourier-Chebyshev coefficients
of a differentiable function $f$, their perturbations $\overline{f^\delta}$ are known,
for which the estimate (\ref{perturbation2}) holds.

\begin{theorem} \label{Th4.1}
Let $1\leq s< \infty$, $1\leq p \leq \infty$, $\mu>2r-1/s+1$. Then for any $\delta>0$ and $N$
it holds
\begin{equation}  \label{low_est_p}
R_{N,\delta}^{(r)}(W^{\mu}_{s}, \Psi_N, C, \ell_p)
\geq {\cal E}_{\delta}^{(r)}(W^{\mu}_{s}, \Psi, C, \ell_p)
\geq c_p \, \delta^{\frac{\mu-2r+1/s-1}{\mu-1/p+1/s}} ,
\end{equation}
where
\begin{equation}  \label{c_p}
c_p = 3^{-\frac{\mu(2r-1/p+1)}{\mu-1/p+1/s}} \frac{\sqrt{2}}{\sqrt{\pi} (2r-1)!!} .
\end{equation}
\end{theorem}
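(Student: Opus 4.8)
The plan is to establish the lower bound by the classical ``two-element'' (or one-element) testing argument that is standard for optimal-recovery lower bounds: exhibit a fixed ``hard'' function $f_0$ inside the class $W^\mu_s$ (or a suitable scaling of it) whose Galerkin information is essentially indistinguishable, within the $\ell_p$-error level $\delta$, from the information of the zero function, and then observe that no recovery method can do better than half the distance between $f_0^{(r)}$ and $0$ in the $C$-norm. Since $\Psi_N\subset\Psi\subset\mathcal M$ only the weakest quantity ${\cal E}_\delta^{(r)}(W^\mu_s,\Psi,C,\ell_p)$ needs a direct lower bound; the first inequality in \eqref{low_est_p} is just the inclusion already recorded in \eqref{V} (and \eqref{IV} for $p=2$).

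The concrete choice I would make is a single Chebyshev mode: set $f_0(t)=\lambda\,T_n(t)$ for an integer $n$ and a scalar $\lambda>0$ to be tuned. Then $\|f_0\|_{s,\mu}=\lambda\,n^\mu$, so membership in the unit ball forces $\lambda\le n^{-\mu}$; take $\lambda= n^{-\mu}$. The perturbation that makes $f_0$ look like $0$ is $\overline\xi=-\,\overline{G(f_0)}$, a sequence with a single nonzero entry of size $\lambda$ in position $n$, hence $\|\overline\xi\|_{\ell_p}=\lambda=n^{-\mu}$ for every $p$; the admissibility constraint \eqref{perturbation2} becomes $n^{-\mu}\le\delta$, i.e. $n\asymp\delta^{-1/\mu}$ is the largest admissible mode. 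For this $f_0$ both $f_0$ and the zero function produce Galerkin data at $\ell_p$-distance $\le\delta$ from the common point $\overline 0$, so for any method $\psi^{(r)}G\in\Psi$ one has
$$
2\,e_\delta(W^\mu_s,\psi^{(r)}G,C,\ell_p)\ \ge\ \|f_0^{(r)}-0\|_C\ =\ \lambda\,\|T_n^{(r)}\|_C .
$$
It remains to compute $\|T_n^{(r)}\|_C$ from below. Iterating the differentiation formula \eqref{dif_ort} $r$ times and evaluating at $t=1$ (where, by \eqref{max}, all the $\bar T_l$ attain their maximal value $1$, so no cancellation occurs among the positive terms) gives the leading-order behaviour $\|T_n^{(r)}\|_C\ge T_n^{(r)}(1)\asymp \frac{\sqrt2}{\sqrt\pi}\,\frac{n^{2r}}{(2r-1)!!}$ — this is exactly the source of the constant $\sqrt2/(\sqrt\pi\,(2r-1)!!)$ in \eqref{c_p}; indeed the classical identity $T_n^{(r)}(1)=\frac{\sqrt2}{\sqrt\pi}\prod_{j=0}^{r-1}\frac{n^2-j^2}{2j+1}$ makes the asymptotics transparent. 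Combining, $e_\delta\gtrsim n^{-\mu}n^{2r}\asymp n^{2r-\mu}$, and substituting the optimal $n\asymp\delta^{-1/\mu}$ yields the exponent $(2r-\mu)(-1/\mu)=(\mu-2r)/\mu$.

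The remaining discrepancy between this naive exponent and the asserted $\frac{\mu-2r+1/s-1}{\mu-1/p+1/s}$ is what forces the real work: one cannot afford to put all the ``mass'' of the perturbation into a single coefficient when $p<\infty$, because then the $\ell_p$-norm of a block of $m$ equal coefficients is $m^{1/p}$ times the common value, which is much smaller than $m$ times it; and symmetrically the $W^\mu_s$-norm of such a block is governed by the $\ell_s$-type sum. The correct hard element is therefore a \emph{block} $f_0(t)=\lambda\sum_{k\in\Delta}\varepsilon_k T_k(t)$ over a dyadic-type window $\Delta=[n,2n]$ (or $[n,2n-1]$) with signs $\varepsilon_k=\pm1$ chosen so that the derivative values $T_k^{(r)}(1)$ add constructively; then $\|f_0\|_{s,\mu}\asymp\lambda\,n^\mu\,n^{1/s}$, $\|\overline\xi\|_{\ell_p}=\|\overline{G(f_0)}\|_{\ell_p}\asymp\lambda\,n^{1/p}$, and $\|f_0^{(r)}\|_C\gtrsim\lambda\sum_{k\in\Delta}|T_k^{(r)}(1)|\asymp\lambda\,n^{2r}\,n$. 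Imposing $\lambda\,n^{\mu+1/s}\le1$ and $\lambda\,n^{1/p}\le\delta$ gives $\lambda\asymp\min\{n^{-\mu-1/s},\delta\,n^{-1/p}\}$, and the two constraints balance precisely at $n\asymp\delta^{-1/(\mu-1/p+1/s)}$, which is exactly the discretization level $N$ appearing in Theorems \ref{Th2} and \ref{Th1}; plugging back gives $\|f_0^{(r)}\|_C\gtrsim n^{-\mu-1/s}\,n^{2r+1}\asymp\delta^{(\mu-2r+1/s-1)/(\mu-1/p+1/s)}$, matching the upper bound of Theorem \ref{Th2} and showing order-optimality of the truncation method. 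The factor $3^{-\mu(2r-1/p+1)/(\mu-1/p+1/s)}$ in \eqref{c_p} is the residue of using the concrete window $[n,2n]$ (or similar) together with the elementary bounds $n\le 2n\le 3\delta^{-1/(\mu-1/p+1/s)}$ in the scaling step.

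The main obstacle — the one step that needs genuine care rather than bookkeeping — is the lower estimate for $\sum_{k\in\Delta}|T_k^{(r)}(1)|$, equivalently the constructive-interference estimate for the iterated sums in \eqref{dif_ort}. One must verify that choosing the signs $\varepsilon_k$ appropriately (or simply evaluating at $t=1$, where \eqref{max} already guarantees all contributing $T_l(1)>0$, so that \emph{no} sign juggling is needed and $f_0^{(r)}(1)=\lambda\sum_{k\in\Delta}T_k^{(r)}(1)$ is a sum of positive terms) really yields $\gtrsim \lambda\,n^{2r+1}$ and pins down the constant $\frac{\sqrt2}{\sqrt\pi(2r-1)!!}$ via $T_k^{(r)}(1)\sim\frac{\sqrt2}{\sqrt\pi}\frac{k^{2r}}{(2r-1)!!}$. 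Everything else — the two admissibility constraints, the balancing of $n$, and the passage from the two-function indistinguishability to the $\inf_{\psi^{(r)}G}$ — is routine and parallels the authors' own upper-bound computations in Sections \ref{up_bound_C}--\ref{up_bound_L2}.
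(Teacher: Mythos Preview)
Your proposal is correct and, once you move past the exploratory single-mode attempt, coincides with the paper's own argument: the authors take exactly the block function $f_1(t)=3^{-\mu}N_1^{-\mu-1/s}\sum_{k=N_1+r}^{2N_1+r-1}T_k(t)$ with $N_1$ determined by $3^{-\mu}N_1^{-\mu+1/p-1/s}=\delta$, verify $\|f_1\|_{s,\mu}\le1$ and $\|\overline{f_1}\|_{\ell_p}=\delta$, bound $\|f_1^{(r)}\|_C$ from below by evaluating at $t=1$ via the iterated formula \eqref{dif_ort} (your observation that all terms are positive there, so no signs $\varepsilon_k$ are needed, is precisely what they use), and conclude with the same two-element indistinguishability argument against $\overline 0$. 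Your identification of the constant via $T_k^{(r)}(1)=\frac{\sqrt2}{\sqrt\pi}\prod_{j=0}^{r-1}\frac{k^2-j^2}{2j+1}$ and of the factor $3^{-\mu(\cdots)}$ as an artifact of the window endpoints is also on the mark.
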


\begin{proof}
For an arbitrary $\delta>0$, we choose $N_1$ so that
\begin{equation}  \label{N_1}
3^{-\mu} N_1^{-\mu+1/p-1/s} = \delta
\end{equation}
and construct an auxiliary function
\begin{equation}  \label{f_1}
f_1(t)  = 3^{-\mu} \,  N_1^{-\mu-1/s}\, \mathop{{\sum}}\limits_{k=N_1+r}^{2N_1+r-1}
T_k(t) .
\end{equation}
It is easy to see that $\|f_1\|_{s,\mu}\leq 1$. Further, since
$$
\|\overline{f_1}\|_{\ell_p}^p
= 3^{-\mu p} \,  N_1^{-\mu p - p/s + 1} ,
$$
then by the choice of $N_1$ (\ref{N_1}), we have
\begin{equation}  \label{f_1=delta}
\|\overline{f_1}\|_{\ell_p}
= 3^{-\mu} \,  N_1^{-\mu +1/p - 1/s} = \delta.
\end{equation}
Let us estimate the value $\|f_1^{(r)}\|_{C}$ from below.
By (\ref{dif_ort}) we write down the representation
$$
f_1^{(r)}(t)  = 3^{-\mu} \,  N_1^{-\mu-1/s}\, \mathop{{\sum}}\limits_{k=N_1+r}^{2N_1+r-1}
T_k^{(r)}(t)
$$
\begin{equation}  \label{RHS}
= 2^r\, 3^{-\mu} \,  N_1^{-\mu-1/s}\, \mathop{{\sum}}\limits_{k=N_1+r}^{2N_1+r-1} k
\mathop{{\sum}}\limits_{l_1=r-1}^{k-1} l_1 \mathop{{\sum}}\limits_{l_2=r-2}^{l_1-1} l_2 \ldots
\mathop{{\sum}}\limits_{l_{r-1}=1}^{l_{r-2}-1} l_{r-1}
\mathop{{\sum}}\limits_{l_{r}=0}^{l_{r-1}-1} \xi_{l_r} T_{l_r}(t) .
\end{equation}
We note that in the right-hand side of (\ref{RHS}) only terms with odd indexes $l_1+k, l_2+l_1,..., l_r+l_{r-1}$ take part.
It is easy to see that
$$
\|f_1^{(r)}\|_{C}  \ge  |f_1^{(r)}(1)|
= \frac{2^{r+1/2}}{\sqrt{\pi}}\, 3^{-\mu} \,  N_1^{-\mu-1/s}\, \mathop{{\sum}}\limits_{k=N_1+r}^{2N_1+r-1} k
$$
$$
\times \mathop{{\sum}}\limits_{l_1=r-1}^{k-1} l_1 \mathop{{\sum}}\limits_{l_2=r-2}^{l_1-1} l_2 \ldots
\mathop{{\sum}}\limits_{l_{r-1}=1}^{l_{r-2}-1} l_{r-1}
\mathop{{\sum}}\limits_{l_{r}=0}^{l_{r-1}-1} 1
$$
$$
\ge \frac{\sqrt{2} N_1^{-\mu+2r-1/s+1}}{3^{\mu} \sqrt{\pi} (2r-1)!!} .
$$
This means
\begin{equation}  \label{f_1(r)}
\|f_1^{(r)}\|_{C}
\geq c_p \, \delta^{\frac{\mu-2r+1/s-1}{\mu-1/p+1/s}} .
\end{equation}

Next, we use a well-known scheme for obtaining lower bounds (see, for example, \cite{Os}).
Since $f_1, -f_1\in W^\mu_s$, then according to (\ref{f_1=delta}), we have that
$\overline{0} =(0,\ldots,0,\ldots)\in \ell_p$ is a $\delta$-perturbation of both
$\overline{f_1}$ and $-\overline{f_1}$.
Therefore, for any $\psi^{(r)} G\in \Psi$ it is true
$$
2\|f_1^{(r)}\|_{C}
\le \|f_1^{(r)} - \psi^{(r)}G(0)\|_{C}
+ \|-f_1^{(r)} - \psi^{(r)}G(0)\|_{C} .
$$
This implies
$$
\|f_1^{(r)}\|_{C}
\leq \sup_{\substack{\|f\|_{s,\mu}\leq 1},
\atop
\|\overline{f}\|_{\ell_p} \leq \delta }
\| f^{(r)}\|_{C}
 \leq \sup_{\substack{\|f\|_{s,\mu}\leq 1}}
 \ \sup_{\substack{f^\delta\in L_{2,\omega}: \, \overline{f}^{\delta}=\overline{f}+\overline{\xi}}
 \atop \|\overline{\xi}\|_{\ell_p} \leq \delta }
 \| f^{(r)} - \psi^{(r)}G(f^{\delta}) \|_{C} .
$$
Due to the arbitrariness of $\psi^{(r)} G\in \Psi$, using (\ref{f_1(r)}) we get
\begin{equation}  \label{E_delta}
{\cal E}^{(r)}_{\delta}(W^{\mu}_{s}, \Psi, C, \ell_p)
\geq \| f_1^{(r)}\|_{C}
\geq c_p \, \delta^{\frac{\mu-2r+1/s-1}{\mu-1/p+1/s}} .
\end{equation}
Substituting (\ref{E_delta}) into (\ref{V}), we obtain the statement of Theorem.
\end{proof}

The following statement contains order-optimal estimates for the quantities
${\cal E}^{(r)}_{\delta}$, $R_{N,\delta}^{(r)}$ in the uniform metric.

{\begin{theorem} \label{Th4.2}
Let $1\leq s< \infty$, $\mu>2r-1/s+1$, and let the condition (\ref{perturbation2})
be satisfied for $1\le p\le \infty$.
Then for any $\delta>0$ it holds
$$
{\cal E}_{\delta}^{(r)}(W^{\mu}_{s},\Psi, C, \ell_p)
\asymp \delta^{\frac{\mu-2r+1/s-1}{\mu-1/p+1/s}} .
$$
Moreover, for any $\delta>0$ and $N \asymp \delta^{-\frac{1}{\mu-1/p+1/s}}$ it holds
$$
R_{N,\delta}^{(r)}(W^{\mu}_{s},\Psi_N, C, \ell_p)
\asymp \delta^{\frac{\mu-2r+1/s-1}{\mu-1/p+1/s}}
\asymp N^{-\mu+2r-1/s+1} .
$$
The order-optimal bounds of ${\cal E}^{(r)}_{\delta}$, $R_{N,\delta}^{(r)}$
are implemented by the method (\ref{ModVer}).
\end{theorem}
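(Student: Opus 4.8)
The plan is to read off both bounds from results already established. The lower bound is exactly Theorem~\ref{Th4.1}, which asserts that for every $\delta>0$ and every $N$,
$$R_{N,\delta}^{(r)}(W^{\mu}_{s},\Psi_N, C, \ell_p)\ \geq\ {\cal E}_{\delta}^{(r)}(W^{\mu}_{s},\Psi, C, \ell_p)\ \geq\ c_p\,\delta^{\frac{\mu-2r+1/s-1}{\mu-1/p+1/s}},$$
valid throughout the range $1\le p\le\infty$; so the remaining task is to exhibit a method attaining this order.

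For the upper bound I would use the truncation method $\mathcal{D}_N^{(r)}$ from (\ref{ModVer}) with $\Omega=[r,N]$ and discretization level $N\asymp\delta^{-\frac{1}{\mu-1/p+1/s}}$. First I would note that $\mathcal{D}_N^{(r)}$ uses only the coefficients $\langle f^\delta,T_k\rangle$, $r\le k\le N$, hence is of the form $\psi^{(r)}G_\Omega$ with $\card(\Omega)=N-r+1\le N$, so that $\mathcal{D}_N^{(r)}\in\Psi_N\subset\Psi$ once the implicit constant in $N\asymp\delta^{-1/(\mu-1/p+1/s)}$ is fixed to respect this cardinality bound. Theorem~\ref{Th2} then supplies, uniformly over $\|f\|_{s,\mu}\le1$ and over all $f^\delta$ admissible in the sense of (\ref{perturbation2}),
$$\|f^{(r)}-\mathcal{D}^{(r)}_N f^\delta\|_{C}\ \le\ c\,\delta^{\frac{\mu-2r+1/s-1}{\mu-1/p+1/s}},$$
i.e.\ $e_\delta(W^\mu_s,\mathcal{D}^{(r)}_N,C,\ell_p)\preceq\delta^{(\mu-2r+1/s-1)/(\mu-1/p+1/s)}$. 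Passing to the infimum over $\Psi_N$, respectively over $\Psi$, bounds $R_{N,\delta}^{(r)}$, respectively ${\cal E}_{\delta}^{(r)}$, from above by this same order, and together with Theorem~\ref{Th4.1} this gives both $\asymp$ relations, as well as the assertion that (\ref{ModVer}) implements the order-optimal bounds.

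Finally, the equivalence $\delta^{\frac{\mu-2r+1/s-1}{\mu-1/p+1/s}}\asymp N^{-\mu+2r-1/s+1}$ under $N\asymp\delta^{-\frac{1}{\mu-1/p+1/s}}$ is a one-line computation: writing $N^{-\mu+2r-1/s+1}=N^{-(\mu-2r+1/s-1)}$ and inserting $N\asymp\delta^{-1/(\mu-1/p+1/s)}$ reproduces precisely the exponent $\frac{\mu-2r+1/s-1}{\mu-1/p+1/s}$ of $\delta$. I do not expect any genuine obstacle here; the only care needed will be the bookkeeping in the choice of $N$ --- large enough that $\Psi_N$ contains a truncation level of the correct order, small enough that $\card([r,N])\le N$ --- together with the observation that the range $1\le p\le\infty$ is already covered by Theorem~\ref{Th4.1}, so that no detour through (\ref{V}) (which is stated only for $p\ge2$) is required.
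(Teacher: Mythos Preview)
Your proposal is correct and follows exactly the paper's approach: the paper's proof consists of a single sentence stating that the assertion follows immediately from Theorem~\ref{Th2} (upper bound) and Theorem~\ref{Th4.1} (lower bound). Your expansion, including the bookkeeping on $\card([r,N])\le N$ and the direct computation of the $N\leftrightarrow\delta$ equivalence, simply makes explicit what the paper leaves implicit.
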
}

\vskip 2mm

The assertion of Theorem \ref{Th4.2} follows immediately from Theorem \ref{Th2} (upper bound) and
Theorem \ref{Th4.1} (lower bound).

\vskip 4mm

\begin{remark} \label{Legendre}
Comparison of Theorem \ref{Th4.2} with the results of \cite{Lu&Naum&Per} and \cite{Sol_Stas_UMZ2022} shows that, in the $C$-metric, Chebyshev polynomials provide a higher accuracy of numerical differentiation compared to Legendre polynomials. For example, for $p=s=2$, the highest order of recovery of the first derivative $f^{(1)}$ by Chebyshev polynomials is $O(\delta^{\frac{\mu-5/2}{\mu}})$ and by Legendre polynomials is $O(\delta^\frac{\mu-3}{\mu})$.
\end{remark}	
	
\vskip 2mm

\begin{remark} \label{change1}
As follows from Theorem \ref{Th4.2}, narrowing the set of methods from $\Psi$
to $\Psi_N$ does not affect the optimal order of accuracy $O(\delta^{\frac{\mu-2r+1/s-1}{\mu-1/p+1/s}})$.
In other words, no method from $\Psi$ dealing with an arbitrary (possibly infinite)
amount of Galerkin information provides a higher order  accuracy than method (\ref{ModVer}).
\end{remark}

\vskip 2mm
		
{It remains to establish the smallest value of $N$ at which this order is achieved for the quantity
$R_{N,\delta}^{(r)}$ in the uniform metric.}

{\begin{theorem} \label{Th4.3}
Let $1\leq s< \infty$, $1\leq p \leq \infty$, $\mu>2r-1/s+1$.
Then for any $\delta>0$ and $N$ it holds
\begin{equation}  \label{comb}
R_{N,\delta}^{(r)}(W^{\mu}_{s},\Psi_N, C, \ell_p)
\ge c\, \max \left\{
\delta^{\frac{\mu-2r+1/s-1}{\mu-1/p+1/s}},\, \,
N^{-\mu+2r-1/s+1}\right\} .
\end{equation}
\end{theorem}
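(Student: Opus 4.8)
The plan is to establish the lower bound in (\ref{comb}) by combining two separate estimates: the $\delta$-dependent term and the $N$-dependent term, each proved by its own perturbation argument, and then taking the maximum. The first term, $\delta^{\frac{\mu-2r+1/s-1}{\mu-1/p+1/s}}$, is already available: by the chain of inequalities $\Psi_N\subset\Psi$ we have $R_{N,\delta}^{(r)}\ge {\cal E}_{\delta}^{(r)}$, and Theorem \ref{Th4.1} gives precisely ${\cal E}_{\delta}^{(r)}(W^\mu_s,\Psi,C,\ell_p)\ge c_p\,\delta^{\frac{\mu-2r+1/s-1}{\mu-1/p+1/s}}$. So that half requires no new work.

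For the second term, $N^{-\mu+2r-1/s+1}$, the key point is that any algorithm in $\Psi_N$ uses at most $N$ Fourier--Chebyshev coefficients, i.e. it is blind to the coefficients indexed outside some set $\Omega$ with $\card(\Omega)\le N$. The plan is to fix an arbitrary such $\Omega\subset[r,\infty)$ and exhibit a function $f_2\in W^\mu_s$ supported (in the coefficient sense) on indices \emph{outside} $\Omega$, on which $\psi^{(r)}G_\Omega$ is forced to err. Concretely, among any $2N+r$ consecutive integers $\{N+r,\dots,3N+2r-1\}$ at least $N+r$ (hence at least one, and in fact a positive proportion) lie outside $\Omega$; more simply, one can choose a single index $k^*\in[2N,\,3N]\setminus\Omega$ (possible since $\card(\Omega)\le N<N+1=\card([2N,3N]\cap\mathbb{N}$ restricted appropriately), or a block of such indices to get the right power of $N$ in the norm estimate. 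Take
$$
f_2(t) = c\,N^{-\mu-1/s}\sum_{\substack{k=2N\\ k\notin\Omega}}^{3N-1} T_k(t),
$$
with the constant chosen so that $\|f_2\|_{s,\mu}\le 1$; since the number of summands is $\asymp N$ this forces a prefactor $\asymp N^{-\mu-1/s}$ as in (\ref{f_1}). Then $G_\Omega(f_2)=\overline 0=G_\Omega(-f_2)$, so for any recovery operator $\psi^{(r)}$ the usual two-point argument gives $e_\delta(W^\mu_s,\psi^{(r)}G_\Omega,C,\ell_p)\ge \|f_2^{(r)}\|_C$. (One should also check $f_2$ admits a $\delta$-admissible perturbation, but $\overline{f_2}$ itself has $\ell_p$-norm $\asymp N^{-\mu+1/p-1/s}$, which is $\le\delta$ exactly in the regime where the $N$-term dominates; in the complementary regime the first term of the maximum is the larger one and already handled, so the two cases dovetail.)

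The computation of $\|f_2^{(r)}\|_C\ge |f_2^{(r)}(1)|$ is then identical in spirit to the one in the proof of Theorem \ref{Th4.1}: using (\ref{dif_ort}) repeatedly, evaluating at $t=1$ where all $T_{l_r}(1)>0$ so there is no cancellation, and estimating the nested sums $B^r_k\asymp k^{2r-2}$ (cf. (\ref{Br})) together with the outer factor $k$, one gets $\|f_2^{(r)}\|_C\succeq N^{-\mu-1/s}\cdot N\cdot N^{2r-1}\cdot N = N^{-\mu+2r-1/s+1}$, where the extra factor $N$ is the number of terms in the $k$-sum. Taking the infimum over $\psi^{(r)}$ and over all admissible $\Omega$ with $\card(\Omega)\le N$ yields $R_{N,\delta}^{(r)}(W^\mu_s,\Psi_N,C,\ell_p)\succeq N^{-\mu+2r-1/s+1}$, and combining with the first estimate gives (\ref{comb}).

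The main obstacle is the bookkeeping in the second estimate: one must choose the block of indices outside $\Omega$ to have cardinality comparable to $N$ (not just one index) so that the $L_\infty$-norm of the derivative picks up the full power $N^{-\mu+2r-1/s+1}$ rather than a weaker one, and one must verify that such a block can always be found inside an interval of length $O(N)$ regardless of how the $N$ forbidden points of $\Omega$ are distributed — a simple pigeonhole argument (e.g. in $[2N,4N)$ the set $\Omega$ omits at least $N$ integers, among which a sub-block of length $\ge N/2$ must be consecutive-enough, or more cleanly: work in $[cN, 2cN)$ for a large absolute constant $c$ and note the complement of $\Omega$ there has size $\ge (2c-1)N/\,\cdot$, from which a run suffices for the order estimate). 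Care is also needed that evaluating $f_2^{(r)}$ at $t=1$ genuinely gives a lower bound with the claimed constant, but this is exactly the positivity-of-$T_k(1)$ trick already used for $f_1$, so no new idea is required there.
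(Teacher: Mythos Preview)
Your plan is correct and matches the paper's proof: invoke Theorem~\ref{Th4.1} via (\ref{V}) for the $\delta$-term, and for the $N$-term build $f_2$ on $\asymp N$ indices in an interval of length $O(N)$ lying outside $\Omega$, use $G_\Omega(\pm f_2)=\overline{0}$ together with the two-point argument, and bound $\|f_2^{(r)}\|_C$ from below by evaluating at $t=1$.

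Two of the difficulties you flag are illusory, and removing them brings your argument in line with the paper's. First, the $\delta$-admissibility check is vacuous: simply take $f_2^\delta:=f_2$, i.e.\ zero perturbation $\overline{\xi}=\overline{0}$, which satisfies $\|\overline{\xi}\|_{\ell_p}\le\delta$ for every $\delta>0$; hence the bound $R_{N,\delta}^{(r)}\succeq N^{-\mu+2r-1/s+1}$ holds unconditionally and no dovetailing with the $\delta$-regime is needed. Second, the indices outside $\Omega$ need not be consecutive or form a block: since every $T_k(1)>0$, there is no cancellation at $t=1$, so \emph{any} set of $\asymp N$ indices contained in $[cN,c'N]$ already yields $|f_2^{(r)}(1)|\succeq N^{-\mu+2r-1/s+1}$. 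The paper accordingly just takes $\Lambda_N=\{k: N+r\le k\le 3N+r,\ k\notin\hat\Omega\}$, which has at least $N+1$ elements by counting, and no pigeonhole refinement is required.
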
}

\begin{proof}
{Let $\delta$ and $N$ be arbitrary.
Let us fix an arbitrarily chosen set $\hat{\Omega}$, $\card(\hat{\Omega})\le N$, of points
$k$, $r\le k < \infty$, of the coordinate axis and take a set
$$
\Lambda_N = \{k: N+r\le k\le 3N+r, \, k \notin \hat{\Omega}\} ,
$$
consisting of $N$ points of the axis such that
$$
\hat{\Omega} \cap \Lambda_N = \varnothing .
$$
Note that such a set $\Lambda_N$ will always exist.
Further, we construct the following auxiliary function
$$
f_2 (t) = 4^{-\mu}\, N^{-\mu-1/s}\, \sum_{k\in \Lambda_N} T_k (t)
$$
and estimate the norm of $f_2$ in the metric of $W^{\mu}_{s}$:}
$$
\|f_2\|_{s,\mu}^s = 4^{-\mu s}\, N^{-\mu s - 1}\, \sum_{k\in \Lambda_N} k^{\mu s} \le 1 .
$$
Using (\ref{dif_ort}), we write down the representation
$$
f_2^{(r)} (t)
= 2^r\, 4^{-\mu} \,  N^{-\mu-1/s}\, \mathop{{\sum}}\limits_{k\in \Lambda_N} k
\mathop{{\sum}}\limits_{l_1=r-1}^{k-1} l_1 \mathop{{\sum}}\limits_{l_2=r-2}^{l_1-1} l_2 \ldots
\mathop{{\sum}}\limits_{l_{r-1}=1}^{l_{r-2}-1} l_{r-1}
\mathop{{\sum}}\limits_{l_{r}=0}^{l_{r-1}-1} \xi_{l_r} T_{l_r}(t) .
$$
Then
$$
\|f_2^{(r)}\|_{C}  \ge  |f_2^{(r)}(1)|
= \frac{2^{r+1/2}}{\sqrt{\pi}}\, 4^{-\mu} \,  N^{-\mu-1/s}\, \mathop{{\sum}}\limits_{k\in \Lambda_N} k
$$
$$
\times \mathop{{\sum}}\limits_{l_1=r-1}^{k-1} l_1 \mathop{{\sum}}\limits_{l_2=r-2}^{l_1-1} l_2 \ldots
\mathop{{\sum}}\limits_{l_{r-1}=1}^{l_{r-2}-1} l_{r-1}
\mathop{{\sum}}\limits_{l_{r}=0}^{l_{r-1}-1} 1
$$
\begin{equation}  \label{f_2(r)}
\ge c' N^{-\mu+2r-1/s+1} ,
\end{equation}
where
$$
c'= \frac{\sqrt{2}}{4^{\mu} \sqrt{\pi} (2r-1)!!} .
$$
It is obvious that for any $\delta>0$, under a $\delta$-perturbation $f_2^\delta$ of the function $f_2$,
we can consider the function $f_2$ itself, i.e. $f_2^\delta (t) := f_2 (t)$.
Taking into account the relationship
$G_{\hat{\Omega}}(f_2)=G_{\hat{\Omega}}(-f_2)=\overline{0}$,
for any $\psi^{(r)}G_{\hat{\Omega}}\in\Psi_{\hat{\Omega}}$ we have
$\psi^{(r)}G_{\hat{\Omega}}(f_2)=\psi^{(r)}G_{\hat{\Omega}}(-f_2)$ and
$$
2 \|f_2^{(r)}\|_{C}
=  \|-f_2^{(r)} - \psi^{(r)}G_{\hat{\Omega}}(-f_2) - f_2^{(r)} + \psi^{(r)}G_{\hat{\Omega}}(f_2)\|_{C}
$$
$$
\le \|- f_2^{(r)} - \psi^{(r)}G_{\hat{\Omega}}(-f_2^\delta)\|_{C}
+ \|f_2^{(r)} - \psi^{(r)}G_{\hat{\Omega}}(f_2^\delta)\|_{C}
$$
$$
\le 2\, \sup_{\substack{\|f\|_{s,\mu}\leq 1}}
 \ \sup_{\substack{f^\delta\in L_{2,\omega}: \, \overline{f}^{\delta}=\overline{f}+\overline{\xi}}
 \atop \|\overline{\xi}\|_{\ell_p} \leq \delta }
 \| f^{(r)} - \psi^{(r)}G_{\hat{\Omega}}(f^{\delta}) \|_{C}
$$
$$
:= 2\, e_\delta (W_s^\mu, \psi^{(r)}G_{\hat{\Omega}}, C, \ell_p) .
$$
Then, using (\ref{f_2(r)}), we find that for any $\delta>0$ and $N$, the following holds:
$$
e_\delta (W_s^\mu, \psi^{(r)}G_{\hat{\Omega}}, C, \ell_p)
\ge \|f_2^{(r)}\|_{C}
\ge c'\, N^{-\mu+2r-1/s+1} .
$$
From here, due to arbitrariness of $\psi^{(r)}G_{\hat{\Omega}}$ and $\hat{\Omega}$, we get
\begin{equation}  \label{low_est_N}
R_{N,\delta}^{(r)}(W^{\mu}_{s},\Psi_N, C, \ell_p)
\ge  c'\, N^{-\mu+2r-1/s+1} .
\end{equation}
Combining (\ref{low_est_p}) and (\ref{low_est_N}), we obtain the statement of Theorem.
\end{proof}

\begin{corollary} \label{Cor3}
From (\ref{comb}) it follows that for any $N\ll \delta^{-1/(\mu+1/s-1/p)}$ the relation
$$
R_{N,\delta}^{(r)}(W^{\mu}_{s},\Psi_N, C, \ell_p)
\gg \delta^{\frac{\mu-2r+1/s-1}{\mu-1/p+1/s}}
$$
is satisfied. This means that for such $N$ the optimal order of accuracy cannot be achieved.
On the other hand, for all $N\ge c\, \delta^{-1/(\mu+1/s-1/p)}$ the optimal order can be achieved.
\end{corollary}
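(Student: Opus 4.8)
The plan is to read off both assertions directly from the two-term lower bound (\ref{comb}), supplemented in the second case by the upper bound already secured for the truncation method in Theorem \ref{Th2}. Throughout, write $\beta = \mu - 2r + 1/s - 1$ and $\gamma = \mu + 1/s - 1/p$, so that the optimal accuracy exponent is $\alpha = \beta/\gamma$ and the threshold is $N_\ast = \delta^{-1/\gamma}$. Under the standing hypothesis $\mu > 2r - 1/s + 1$ one has $\beta > 0$, and since $1/p \le 1 < \mu + 1/s$ one also has $\gamma > 0$. The decisive algebraic observation is that the two candidate lower bounds in (\ref{comb}) coincide exactly at $N = N_\ast$: indeed $(N_\ast)^{-\beta} = \delta^{\beta/\gamma} = \delta^{\alpha}$. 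So the threshold $N_\ast$ is precisely the balance point between the accuracy term and the information term.

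For the first assertion, suppose $N \ll \delta^{-1/\gamma} = N_\ast$. Since $\beta > 0$, the map $x \mapsto x^{-\beta}$ is strictly decreasing, whence $N^{-\beta} / (N_\ast)^{-\beta} = (N/N_\ast)^{-\beta} \to \infty$, that is, $N^{-\beta} \gg (N_\ast)^{-\beta} = \delta^{\alpha}$. Retaining the second term in the maximum of (\ref{comb}) then gives
$$
R_{N,\delta}^{(r)}(W^{\mu}_{s},\Psi_N, C, \ell_p) \ge c\, N^{-\mu+2r-1/s+1} \gg \delta^{\frac{\mu-2r+1/s-1}{\mu-1/p+1/s}} ,
$$
which is the claimed relation. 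Because the optimal recovery order equals $\delta^{\alpha}$ by Theorem \ref{Th4.2}, and because $R_{N,\delta}^{(r)}$ is by definition a lower bound for the error of every algorithm in $\Psi_N$, this says precisely that any algorithm restricted to fewer than $\asymp N_\ast$ coefficients overshoots the optimal error by an unbounded factor, and hence cannot attain the order $O(\delta^{\alpha})$.

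For the converse, let $N \ge c\, \delta^{-1/\gamma}$; here the idea is to exhibit one admissible algorithm in $\Psi_N$ that already reaches the optimal order. I would take the truncation method (\ref{ModVer}) at the balanced level $N_\ast \asymp \delta^{-1/\gamma}$ prescribed by Theorem \ref{Th2}; it consumes only $\card([r,N_\ast]) \asymp N_\ast$ Fourier--Chebyshev coefficients, and after adjusting the generic constant we may assume $N_\ast \le N$, so that $\mathcal{D}^{(r)}_{N_\ast} \in \Psi_N$. By Theorem \ref{Th2} its error obeys $e_\delta(W_s^\mu, \mathcal{D}^{(r)}_{N_\ast}, C, \ell_p) \le c\, \delta^{\alpha}$, which forces $R_{N,\delta}^{(r)} \le c\, \delta^{\alpha}$. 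Combined with the lower bound $R_{N,\delta}^{(r)} \ge c\, \delta^{\alpha}$ furnished by the first term of the maximum in (\ref{comb}), this yields $R_{N,\delta}^{(r)} \asymp \delta^{\alpha}$, i.e. the optimal order is indeed attained.

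I expect no serious obstacle: both halves are elementary consequences of facts already in hand, the monotonicity of $x \mapsto x^{-\beta}$ being what converts the hypothesis $N \ll N_\ast$ into the strict domination $N^{-\beta} \gg \delta^{\alpha}$. The only point requiring mild care is the bookkeeping of the constant $c$ in the converse, so that the truncation level $N_\ast$ genuinely fits within the budget $N$ and the method stays inside $\Psi_N$; this is harmless, since $c$ is permitted to vary from line to line and does not depend on $\delta$ or $N$.
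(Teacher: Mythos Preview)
Your proposal is correct and matches the paper's intended reasoning: the paper states Corollary~\ref{Cor3} without proof, treating it as immediate from (\ref{comb}) together with the already-established upper bound (Theorem~\ref{Th2}), and your argument supplies exactly those implicit steps. The only detail worth noting is that for the converse you rightly use $\mathcal{D}^{(r)}_{N_\ast}$ rather than $\mathcal{D}^{(r)}_{N}$, since the latter's perturbation term $\delta N^{2r-1/p+1}$ would overshoot for large $N$; this is handled correctly in your write-up.
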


Thus, we get
\begin{theorem} \label{ThNEW}
Let $1\leq s< \infty$, $1\leq p \leq \infty$, $\mu>2r-1/s+1$.
The most economical (in order) choice of $N$ gives the rule
\begin{equation}  \label{optNp}
N_{\min}\asymp \delta^{-1/(\mu+1/s-1/p)} ,
\end{equation}
for which it holds
$$
\min\limits_N R_{N,\delta}^{(r)}(W^{\mu}_{s},\Psi_N, C, \ell_p)
\asymp R_{N_{\min},\delta}^{(r)}(W^{\mu}_{s},\Psi_{N_{\min}}, C, \ell_p)
\asymp \delta^{\frac{\mu-2r+1/s-1}{\mu-1/p+1/s}} .
$$
\end{theorem}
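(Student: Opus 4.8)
The plan is to assemble the statement from the lower and upper bounds already proved, since Theorem~\ref{ThNEW} is purely a synthesis of Theorems~\ref{Th2}, \ref{Th4.1} and \ref{Th4.3}. The first point to notice is that the exponent governing the parameter choice in Theorem~\ref{Th2}, namely $\mu-1/p+1/s$, coincides with $\mu+1/s-1/p$, so the rule (\ref{optNp}) is exactly the rule $N\asymp\delta^{-1/(\mu-1/p+1/s)}$ under which the truncation method $\mathcal{D}_N^{(r)}$ attains the accuracy $O(\delta^{\frac{\mu-2r+1/s-1}{\mu-1/p+1/s}})$ in the $C$-metric. Since the set $\Omega=[r,N_{\min}]$ satisfies $\card(\Omega)=N_{\min}-r+1\le N_{\min}$, the method $\mathcal{D}_{N_{\min}}^{(r)}$ belongs to $\Psi_{N_{\min}}$, and hence, by Corollary~\ref{Cor2} (equivalently Theorem~\ref{Th2}),
$$
R_{N_{\min},\delta}^{(r)}(W^{\mu}_{s},\Psi_{N_{\min}},C,\ell_p)
\le e_{\delta}(W^{\mu}_{s},\mathcal{D}_{N_{\min}}^{(r)},C,\ell_p)
\preceq \delta^{\frac{\mu-2r+1/s-1}{\mu-1/p+1/s}} ,
$$
so in particular $\min_N R_{N,\delta}^{(r)}(W^{\mu}_{s},\Psi_N,C,\ell_p)\preceq\delta^{\frac{\mu-2r+1/s-1}{\mu-1/p+1/s}}$.

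For the matching lower bound I would invoke the inequality (\ref{low_est_p}) of Theorem~\ref{Th4.1}, which holds for \emph{every} $\delta>0$ and every $N$ and gives $R_{N,\delta}^{(r)}(W^{\mu}_{s},\Psi_N,C,\ell_p)\ge c_p\,\delta^{\frac{\mu-2r+1/s-1}{\mu-1/p+1/s}}$ uniformly in $N$; taking the infimum over $N$ yields $\min_N R_{N,\delta}^{(r)}(W^{\mu}_{s},\Psi_N,C,\ell_p)\succeq\delta^{\frac{\mu-2r+1/s-1}{\mu-1/p+1/s}}$. Since trivially $\min_N R_{N,\delta}^{(r)}\le R_{N_{\min},\delta}^{(r)}$, the three quantities
$$
\min_N R_{N,\delta}^{(r)}(W^{\mu}_{s},\Psi_N,C,\ell_p),\qquad
R_{N_{\min},\delta}^{(r)}(W^{\mu}_{s},\Psi_{N_{\min}},C,\ell_p),\qquad
\delta^{\frac{\mu-2r+1/s-1}{\mu-1/p+1/s}}
$$
are squeezed between the same upper and lower orders, which proves the chain of $\asymp$ relations in the statement.

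It remains to justify the claim that (\ref{optNp}) is the most economical rule in order, and here I would cite Corollary~\ref{Cor3}, which is itself an immediate consequence of the bound (\ref{comb}) of Theorem~\ref{Th4.3}: for every $N\ll\delta^{-1/(\mu+1/s-1/p)}$ one has $R_{N,\delta}^{(r)}\succeq N^{-\mu+2r-1/s+1}\gg\delta^{\frac{\mu-2r+1/s-1}{\mu-1/p+1/s}}$, so no choice of $N$ of strictly smaller order than $N_{\min}$ can attain the optimal accuracy, while the previous paragraph shows $N_{\min}$ does. The whole argument is bookkeeping; the only places deserving a moment's care are the identification $\mu-1/p+1/s=\mu+1/s-1/p$ and the verification that the truncation method with parameter $N_{\min}$ genuinely lies in $\Psi_{N_{\min}}$, i.e. that tuning the implicit constant in $N_{\min}$ never violates the cardinality constraint $\card(\Omega)\le N$.
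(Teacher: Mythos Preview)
Your proposal is correct and follows exactly the paper's approach: in the paper, Theorem~\ref{ThNEW} is stated immediately after Corollary~\ref{Cor3} with the phrase ``Thus, we get'', indicating it is a direct synthesis of Theorem~\ref{Th4.2} (which already gives the two-sided $\asymp$ at $N\asymp\delta^{-1/(\mu-1/p+1/s)}$) and Corollary~\ref{Cor3} (which rules out smaller $N$). Your write-up simply makes this synthesis explicit, and your closing caveat about the cardinality constraint is the only point where any care is needed.
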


\begin{remark}  \label{rmNEW}
In the framework of  IBC theory, the quantity $N_{\min}$ is usually called information complexity.
The value (\ref{optNp}) describes the smallest (in order) amount of discrete information of the form (\ref{FHc})
necessary to achieve the highest possible accuracy $O(\delta^{\frac{\mu-2r+1/s-1}{\mu-1/p+1/s}})$.
\end{remark}

\subsection{Case of $L_{2,\omega}$-error in input data}

Now we consider separately the case where the error of the input data is measured in $L_{2,\omega}$, i.e.
$\|f-f^\delta\|_{2,\omega} \leq \delta$. In this case, it is possible to establish sharp (in order) estimates for
$E_\delta^{(r)}$ as well.

\begin{theorem} \label{Th4.4}
Let $1\leq s< \infty$, $\mu>2r-1/s+1$. Then for any $\delta>0$
it holds
\begin{equation}  \label{low_est_2}
E_{\delta}^{(r)}(W^{\mu}_{s}, {\cal M}, C)
\geq c_2 \, \delta^{\frac{\mu-2r+1/s-1}{\mu+1/s-1/2}} ,
\end{equation}
where $c_2$ is defined by (\ref{c_p}).
\end{theorem}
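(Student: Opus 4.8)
The plan is to apply the classical two‑point scheme for lower bounds (the same device used in the proofs of Theorems \ref{Th4.1} and \ref{Th4.3}), but now calibrated against the $L_{2,\omega}$–norm of the disturbance rather than an $\ell_p$–norm of its Fourier–Chebyshev coefficients. The key point is that this scheme only needs a single element of $L_{2,\omega}$ that is simultaneously an admissible $\delta$–perturbation of two functions $\pm f_1$ from the class; it imposes no structural requirement (linearity, continuity, or use of Galerkin data) on the mapping being tested, so the resulting estimate is valid for the whole of $\mathcal M$, not just for $\Psi$. This is precisely why one can write $E_\delta^{(r)}$ here instead of ${\cal E}_\delta^{(r)}$.

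Concretely, for a given $\delta>0$ I would pick $N_1$ from the calibration $3^{-\mu}N_1^{-\mu-1/s+1/2}=\delta$, i.e. $N_1\asymp\delta^{-1/(\mu+1/s-1/2)}$, and take the test function from the proof of Theorem \ref{Th4.1},
$$
f_1(t)=3^{-\mu}\,N_1^{-\mu-1/s}\!\!\sum_{k=N_1+r}^{2N_1+r-1}\!\!T_k(t).
$$
The bound $\|f_1\|_{s,\mu}\le1$ is checked exactly as there (using $2N_1+r-1\le 3N_1$). By Parseval, $\|f_1\|_{2,\omega}=\|\overline{f_1}\|_{\ell_2}=3^{-\mu}N_1^{-\mu-1/s+1/2}=\delta$, so $f_1^\delta:=0\in L_{2,\omega}$ satisfies $\|f_1-f_1^\delta\|_{2,\omega}\le\delta$ and likewise $\|-f_1-f_1^\delta\|_{2,\omega}\le\delta$; thus the zero function is a common admissible $\delta$–perturbation of $f_1$ and $-f_1$.

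Next I would estimate $\|f_1^{(r)}\|_C$ from below by $|f_1^{(r)}(1)|$. Iterating (\ref{dif_ort}) and evaluating the nested sums at $t=1$ exactly as in the proof of Theorem \ref{Th4.1} (equivalently, using $T_k^{(r)}(1)=\tfrac{\sqrt2}{\sqrt\pi}\,\prod_{j=0}^{r-1}(k^2-j^2)/(2r-1)!!\succeq k^{2r}$), one obtains
$$
\|f_1^{(r)}\|_C\ge|f_1^{(r)}(1)|\ge \frac{\sqrt2}{\sqrt\pi\,(2r-1)!!}\,3^{-\mu}\,N_1^{-\mu+2r-1/s+1}.
$$
Substituting $N_1=3^{-\mu/(\mu+1/s-1/2)}\,\delta^{-1/(\mu+1/s-1/2)}$ and collecting the powers of $3$ (the exponent of $3$ becomes $-\mu(2r+1/2)/(\mu+1/s-1/2)$) reproduces precisely the constant $c_2$ of (\ref{c_p}) with $p=2$, so that $\|f_1^{(r)}\|_C\ge c_2\,\delta^{(\mu-2r+1/s-1)/(\mu+1/s-1/2)}$.

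Finally, for an arbitrary $m^{(r)}\in\mathcal M$, since $0$ is an admissible perturbation of both $f_1$ and $-f_1$,
$$
2\|f_1^{(r)}\|_C\le\|f_1^{(r)}-m^{(r)}(0)\|_C+\|-f_1^{(r)}-m^{(r)}(0)\|_C ,
$$
so the right–hand side is bounded by $2\,E_\delta^{(r)}(W^\mu_s,\mathcal M,C)$ up to passing to the supremum over $f$ and over admissible $f^\delta$ and then to the infimum over $m^{(r)}$; this gives $E_\delta^{(r)}(W^\mu_s,\mathcal M,C)\ge\|f_1^{(r)}\|_C\ge c_2\,\delta^{(\mu-2r+1/s-1)/(\mu+1/s-1/2)}$, which is (\ref{low_est_2}). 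The computation is essentially routine once Theorem \ref{Th4.1} is available; the only points requiring care are the replacement of the $\ell_p$–calibration by the $L_{2,\omega}=\ell_2$–calibration (which changes the denominator of the exponent from $\mu-1/p+1/s$ to $\mu+1/s-1/2$) and the bookkeeping of the powers of $3$ so that the emerging constant is exactly $c_2$. There is no genuine obstacle beyond these; the conceptual content is only the remark above that the two‑point argument uses nothing about $m^{(r)}$ other than that it is a map $L_{2,\omega}\to C$.
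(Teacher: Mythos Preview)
Your proof is correct and follows essentially the same approach as the paper: both use the auxiliary function $f_1$ from Theorem~\ref{Th4.1} with the $p=2$ calibration $3^{-\mu}N_1^{-\mu-1/s+1/2}=\delta$ (so that $\|f_1\|_{2,\omega}=\|\overline{f_1}\|_{\ell_2}=\delta$ by Parseval), reuse the lower bound $\|f_1^{(r)}\|_C\ge\frac{\sqrt2}{\sqrt\pi\,(2r-1)!!}\,3^{-\mu}N_1^{-\mu+2r-1/s+1}$ already obtained there, and then apply the two-point argument, which---as you correctly emphasize---requires nothing of $m^{(r)}$ beyond being a map $L_{2,\omega}\to C$. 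Your write-up is in fact more explicit than the paper's (which simply refers back to the properties~(\ref{prop_f_1}) of $f_1$), and your bookkeeping of the power of $3$ yielding exactly $c_2$ is accurate.
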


\begin{proof}
To obtain the lower bound (\ref{low_est_2}) for $E_{\delta}^{(r)}$, we use the auxiliary function $f_1(t)$ (\ref{f_1}).
Now we recall the known properties of $f_1$ (see Theorem \ref{Th4.1}):
\begin{equation}  \label{prop_f_1}
\|f_1\|_{s,\mu} \le 1, \qquad
\|f_1\|_{2,\omega} = \|\overline{f_1}\|_{\ell_2} = \delta, \qquad
\|f_1^{(r)}\|_{C}
\geq c_2 \, \delta^{\frac{\mu-2r+1/s-1}{\mu+1/s-1/2}} .
\end{equation}
Next, repeating the reasoning from the proof of Theorem \ref{Th4.1}, we obtain
that for any solution operator $m^{(r)} \in {\mathcal M}$ the following holds:
$$
2\|f_1^{(r)}\|_{C}
\le \|f_1^{(r)} - m^{(r)}0\|_{C}
+ \|-f_1^{(r)} - m^{(r)}0\|_{C} .
$$
This implies
$$
\|f_1^{(r)}\|_{C} \le
\sup_{\substack{\|f\|_{s,\mu}\leq 1},
\atop
\|f\|_{2,\omega} \leq \delta }
\| f^{(r)}\|_{C}
 \leq \sup_{\substack{\|f\|_{s,\mu}\leq 1}}
 \ \sup_{\substack{f^\delta\in L_{2,\omega},
 \atop \|f-f^\delta\|_{2,\omega} \leq \delta }}
 \| f^{(r)} - m^{(r)}f^{\delta} \|_{C} .
$$
Due to the arbitrariness of $m^{(r)} \in {\mathcal M}$, using (\ref{prop_f_1}) we get the desired estimate.
\end{proof}

The combination of Theorem \ref{Th2} (upper bound for $p=2$), Theorem \ref{Th4.4} (lower bound) and relation (\ref{IV})
yields the following statement.

\begin{theorem} \label{Th4.5}
Let $1\leq s< \infty$, $\mu>2r-1/s+1$.
Then for any $\delta>0$ it holds
$$
E^{(r)}_{\delta}(W^{\mu}_{s}, {\mathcal M}, C)
\asymp {\cal E}_{\delta}^{(r)}(W^{\mu}_{s},\Psi, C, \ell_2)
\asymp \delta^{\frac{\mu-2r+1/s-1}{\mu+1/s-1/2}} .
$$
Moreover, for any $\delta>0$ and $N \asymp \delta^{-\frac{1}{\mu+1/s-1/2}}$ it holds
$$
R_{N,\delta}^{(r)}(W^{\mu}_{s},\Psi_N, C, \ell_2)
\asymp \delta^{\frac{\mu-2r+1/s-1}{\mu+1/s-1/2}}
\asymp N^{-\mu+2r-1/s+1} .
$$
The order-optimal bounds of $E^{(r)}_{\delta}$, ${\cal E}^{(r)}_{\delta}$, $R_{N,\delta}^{(r)}$
are implemented by the method (\ref{ModVer}).
\end{theorem}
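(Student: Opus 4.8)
The plan is to assemble the statement from three facts already at our disposal: the upper bound for the truncation method in the $C$--metric (Theorem~\ref{Th2} specialized to $p=2$), the lower bound for $E_\delta^{(r)}$ in the $C$--metric (Theorem~\ref{Th4.4}), and the comparison chain~(\ref{IV}). The one point that must be kept in mind throughout is Parseval's identity $\|f-f^\delta\|_{2,\omega}^2=\sum_{k\ge 0}|\langle f-f^\delta,T_k\rangle|^2=\|\overline{\xi}\|_{\ell_2}^2$, which shows that the $L_{2,\omega}$--perturbation model and the $\ell_2$--perturbation model coincide; this is exactly what legitimizes~(\ref{IV}) and lets Theorem~\ref{Th2} (stated for $\ell_p$) be used to control the $E_\delta^{(r)}$--setting as well.

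First I would treat the lower bound. Under the hypothesis $\mu>2r-1/s+1$, Theorem~\ref{Th4.4} gives, for every $\delta>0$,
$$
E_{\delta}^{(r)}(W^{\mu}_{s}, {\mathcal M}, C)\ \ge\ c_2\,\delta^{\frac{\mu-2r+1/s-1}{\mu+1/s-1/2}} .
$$
Substituting this into~(\ref{IV}) propagates the same bound up the chain, so that for every $N$ and every $\delta>0$
$$
R_{N,\delta}^{(r)}(W^{\mu}_{s}, \Psi_N, C, \ell_2)\ \ge\ {\cal E}_{\delta}^{(r)}(W^{\mu}_{s}, \Psi, C, \ell_2)\ \ge\ E_{\delta}^{(r)}(W^{\mu}_{s}, {\mathcal M}, C)\ \ge\ c_2\,\delta^{\frac{\mu-2r+1/s-1}{\mu+1/s-1/2}} .
$$

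For the matching upper bound I would choose $N\asymp\delta^{-1/(\mu+1/s-1/2)}$. Since the truncation method $\mathcal{D}_N^{(r)}$ of~(\ref{ModVer}) reads off only the Fourier--Chebyshev coefficients indexed by $\Omega=[r,N]$, with $\card(\Omega)\le N$, it belongs to $\Psi_N\subset\Psi\subset\mathcal M$. Applying Theorem~\ref{Th2} with $p=2$ (its hypothesis $\mu>2r-1/s+1$ is precisely the one assumed here, and $\mu-1/p+1/s=\mu+1/s-1/2$) yields
$$
e_\delta(W^{\mu}_{s},\mathcal{D}_N^{(r)},C,\ell_2)\ \le\ c\,\delta^{\frac{\mu-2r+1/s-1}{\mu+1/s-1/2}} ,
$$
hence $R_{N,\delta}^{(r)}\le e_\delta(W^{\mu}_{s},\mathcal{D}_N^{(r)},C,\ell_2)\le c\,\delta^{(\mu-2r+1/s-1)/(\mu+1/s-1/2)}$, and by~(\ref{IV}) the same upper bound holds for ${\cal E}_\delta^{(r)}$ and $E_\delta^{(r)}$. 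Together with the lower bound this proves the two $\asymp$--relations; the equivalence with $N^{-\mu+2r-1/s+1}$ is then the elementary substitution $N^{-\mu+2r-1/s+1}=N^{-(\mu-2r+1/s-1)}\asymp\delta^{(\mu-2r+1/s-1)/(\mu+1/s-1/2)}$ valid for $N\asymp\delta^{-1/(\mu+1/s-1/2)}$. Finally, since each of the quantities $E_\delta^{(r)}$, ${\cal E}_\delta^{(r)}$, $R_{N,\delta}^{(r)}$ was bounded above precisely by the error of $\mathcal{D}_N^{(r)}$ with this $N$, the order-optimality of the method~(\ref{ModVer}) is immediate. There is no genuine obstacle here beyond bookkeeping; the only steps needing a moment's care are verifying that $\mathcal{D}_N^{(r)}$ indeed lies in $\Psi_N$ (it uses at most $N$ Galerkin coefficients) and the Parseval identification of the $\ell_2$-- and $L_{2,\omega}$--noise models, so that Theorem~\ref{Th2} and relation~(\ref{IV}) may be invoked in concert.
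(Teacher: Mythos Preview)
Your proposal is correct and follows essentially the same route as the paper, which simply states that the theorem is obtained by combining Theorem~\ref{Th2} with $p=2$ (upper bound), Theorem~\ref{Th4.4} (lower bound), and the chain~(\ref{IV}). Your additional remarks (the Parseval identification of the $\ell_2$ and $L_{2,\omega}$ noise models, and the verification that $\mathcal{D}_N^{(r)}\in\Psi_N$) are the right bookkeeping details to make the one-line argument fully rigorous.
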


\vskip 4mm

{\begin{remark} \label{change2}
As follows from Theorem \ref{Th4.5}, narrowing the set of methods from ${\mathcal M}$
to $\Psi_N$ and $\Psi$ does not affect the optimal order of accuracy $O(\delta^{\frac{\mu-2r+1/s-1}{\mu+1/s-1/2}})$.
In other words, no mapping from $L_{2,\omega}$ into $C$ provides a higher order of accuracy than the method (\ref{ModVer}).
\end{remark}}

\section{Optimal recovery and information complexity in the integral metric} \label{opt_L2}

{\begin{theorem} \label{Th5.1}
Let $1\leq s< \infty$, $\mu>2r-1/s+1/2$, $1\leq p \leq \infty$. Then for any
$\delta>0$ and $N$ it holds
$$
R_{N,\delta}^{(r)}(W^{\mu}_{s}, \Psi_N, L_{2,\omega}, \ell_p)
\geq {\cal E}_{\delta}^{(r)}(W^{\mu}_{s}, \Psi, L_{2,\omega}, \ell_p)
\geq \overline{c}_p \, \delta^{\frac{\mu-2r+1/s-1/2}{\mu-1/p+1/s}} ,
$$
where
\begin{equation}  \label{ov_c_p}
\overline{c}_p = 3^{-\frac{\mu(2r-1/p+1/2)}{\mu-1/p+1/s}} c^* .
\end{equation}
\end{theorem}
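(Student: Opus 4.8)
The plan is to repeat, almost verbatim, the lower-bound argument of Theorem \ref{Th4.1}, with the uniform norm $\|\cdot\|_C$ replaced throughout by the integral norm $\|\cdot\|_{2,\omega}$; the only genuinely new ingredient is a lower estimate for $\|f_1^{(r)}\|_{2,\omega}$.

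First, for an arbitrary $\delta>0$ I would choose $N_1$ by the same rule (\ref{N_1}), i.e. $3^{-\mu}N_1^{-\mu+1/p-1/s}=\delta$, and take the same auxiliary function $f_1$ defined in (\ref{f_1}). From the proof of Theorem \ref{Th4.1} we may reuse, at no extra cost, the two facts $\|f_1\|_{s,\mu}\le 1$ and $\|\overline{f_1}\|_{\ell_p}=\delta$; in particular $\overline{0}=(0,\ldots,0,\ldots)$ is a $\delta$-perturbation in the $\ell_p$-metric of both $\overline{f_1}$ and $-\overline{f_1}$, and $\pm f_1\in W_s^\mu$.

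The new step is the lower bound for $\|f_1^{(r)}\|_{2,\omega}$. Applying (\ref{dif_ort}) $r$ times to $f_1^{(r)}$ and then interchanging the order of summation exactly as in the proof of Lemma \ref{lemma_BoundErrHC} (taking into account that only terms with $l_1+k,\,l_2+l_1,\ldots,l_r+l_{r-1}$ odd occur), one obtains a representation of the form
$$
f_1^{(r)}(t)=2^r\,3^{-\mu}\,N_1^{-\mu-1/s}\sum_{l_r\ge 0}\xi_{l_r}T_{l_r}(t)\sum_{k}k\,B^r_k ,
$$
where, for $0\le l_r\le N_1$, the inner sum runs over all admissible $k$ with $N_1+r\le k\le 2N_1+r-1$. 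Since $\{T_k\}$ is orthonormal in $L_{2,\omega}$ and $\xi_{l_r}\ge 1$, Parseval's identity gives
$$
\|f_1^{(r)}\|_{2,\omega}^2\ \ge\ \big(2^r\,3^{-\mu}\,N_1^{-\mu-1/s}\big)^2\sum_{l_r=0}^{N_1}\Big(\sum_{k}k\,B^r_k\Big)^2 .
$$
Substituting $B^r_k=c^*k^{2r-2}$ from (\ref{Br}), the inner sum is $c^*\sum_k k^{2r-1}\succeq c^*N_1^{2r}$, and there are $\asymp N_1$ admissible values of $l_r$, whence $\|f_1^{(r)}\|_{2,\omega}\succeq N_1^{-\mu+2r-1/s+1/2}$. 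Inserting $N_1=(3^{\mu}\delta)^{-1/(\mu-1/p+1/s)}$ and keeping track of the constants yields
$$
\|f_1^{(r)}\|_{2,\omega}\ \ge\ \overline{c}_p\,\delta^{\frac{\mu-2r+1/s-1/2}{\mu-1/p+1/s}}
$$
with $\overline{c}_p$ as in (\ref{ov_c_p}).

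Finally I would close the argument by the standard scheme of Theorem \ref{Th4.1}: for every $\psi^{(r)}G\in\Psi$, since $\overline{0}$ is a common $\delta$-perturbation of $\pm\overline{f_1}$, the triangle inequality gives
$$
2\|f_1^{(r)}\|_{2,\omega}\le \|f_1^{(r)}-\psi^{(r)}G(0)\|_{2,\omega}+\|-f_1^{(r)}-\psi^{(r)}G(0)\|_{2,\omega}\le 2\,e_\delta(W_s^\mu,\psi^{(r)}G,L_{2,\omega},\ell_p) ,
$$
and taking the infimum over $\psi^{(r)}G\in\Psi$ bounds ${\cal E}^{(r)}_\delta(W_s^\mu,\Psi,L_{2,\omega},\ell_p)$ from below by $\overline{c}_p\,\delta^{(\mu-2r+1/s-1/2)/(\mu-1/p+1/s)}$; the inequality for $R^{(r)}_{N,\delta}$ then follows from the inclusion $\Psi_N\subset\Psi$ (cf. (\ref{V})). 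I expect the main obstacle to be the bookkeeping in the change of summation order for $f_1^{(r)}$ — keeping the odd-index restriction and the nested-sum constant $c^*$ of (\ref{Br}) under control so that the explicit constant $\overline{c}_p$ of (\ref{ov_c_p}) is recovered; the boundary cases $p=1$ and $p=\infty$ are handled as in the analogous lemmas.
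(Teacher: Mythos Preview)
Your proposal is correct and follows essentially the same route as the paper's proof: the paper also reduces Theorem~\ref{Th5.1} to Theorem~\ref{Th4.1} with the single new ingredient being the lower bound for $\|f_1^{(r)}\|_{2,\omega}$, obtained by interchanging the order of summation in (\ref{RHS}) and then using orthonormality together with (\ref{Br}). The only cosmetic difference is that the paper writes out the full reordered representation of $f_1^{(r)}$ over both ranges $0\le l_r\le N_1$ and $N_1+1\le l_r\le 2N_1-1$, whereas you keep only the first range; since the contributions are squared in Parseval's identity, your truncation already yields the required lower bound, so nothing is lost.
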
}

\begin{proof}
The proof of Theorem \ref{Th5.1} almost completely coincides with the proof of Theorem \ref{Th4.1},
including the form of the auxiliary functions
$f_1$, $f_1^{\delta}=0$.
The only difference is in the lower estimate of the norm of
$f_1^{(r)}$. Changing the order of summation in (\ref{RHS}) yields to the representation
	$$
	f_1^{(r)}(t) = 2^r\, 3^{-\mu} \, {N_1^{-\mu-1/s}} \,
	\Big(\mathop{{\sum}}\limits_{l_r=0}^{N_1} \xi_{l_r} T_{l_r}(t) \mathop{{\sum}
	}\limits_{k=N_1+r}^{2N_1+r-1}\, k
	+ \mathop{{\sum}}\limits_{l_r=N_1+1}^{2N_1-1} \xi_{l_r} T_{l_r}(t) \mathop{{\sum}}\limits_{k=l_r+r}^{2N_1+r-1}\, k\Big)
	B^r_k .
	$$
Taking into account (\ref{Br}), it is easy to verify that
	$$
	\|f_1^{(r)}\|_{2,\omega}^2
	\geq  2^{2r}\,  3^{-2\mu} \, (c^*)^2 N^{-2\mu+4r-2/s+1} .
	$$
Next, repeating the reasoning from Theorem \ref{Th4.1}, we obtain for any $\delta>0$
$$
{\cal E}_{\delta}^{(r)}(W^{\mu}_{s}, \Psi, L_{2,\omega}, \ell_p)
\geq \overline{c}_p \, \delta^{\frac{\mu-2r+1/s-1/2}{\mu-1/p+1/s}} .
$$
By using (\ref{V}) we get the assertion of Theorem.
\end{proof}

\vskip 2mm

The following statement contains order-optimal estimates for the quantities
${\cal E}^{(r)}_{\delta}$, $R_{N,\delta}^{(r)}$ in the integral metric.

\begin{theorem} \label{Th5.2}
Let $1\leq s< \infty$, $\mu>2r-1/s+1$, and let the condition (\ref{perturbation2})
be satisfied for $1\le p\le \infty$.
Then for any $\delta>0$ it holds
$$
{\cal E}_{\delta}^{(r)}(W^{\mu}_{s},\Psi, L_{2,\omega}, \ell_p)
\asymp \delta^{\frac{\mu-2r+1/s-1/2}{\mu-1/p+1/s}} .
$$
Moreover, for any $\delta>0$ and $N \asymp \delta^{-\frac{1}{\mu-1/p+1/s}}$ it holds
$$
R_{N,\delta}^{(r)}(W^{\mu}_{s},\Psi_N, L_{2,\omega}, \ell_p)
\asymp \delta^{\frac{\mu-2r+1/s-1/2}{\mu-1/p+1/s}}
\asymp N^{-\mu+2r-1/s+1/2} .
$$
The order-optimal bounds of ${\cal E}^{(r)}_{\delta}$, $R_{N,\delta}^{(r)}$
are implemented by the method (\ref{ModVer}).
\end{theorem}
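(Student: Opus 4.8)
The plan is to derive Theorem~\ref{Th5.2} purely by combining results already in hand: the $L_{2,\omega}$-error bound for the truncation method (Theorem~\ref{Th1}), the matching lower bound (Theorem~\ref{Th5.1}), and the elementary inclusion $\Psi_N\subset\Psi$, which gives $R_{N,\delta}^{(r)}\ge{\cal E}_{\delta}^{(r)}$ (cf. (\ref{V})). First I would observe that the standing hypothesis $\mu>2r-1/s+1$ is strictly stronger than the condition $\mu>2r-1/s+1/2$ under which Theorems~\ref{Th1} and \ref{Th5.1} were proved, so both apply here without modification; and that for the balanced discretization parameter $N\asymp\delta^{-1/(\mu-1/p+1/s)}$ the two scales coincide, $N^{-\mu+2r-1/s+1/2}\asymp\delta^{(\mu-2r+1/s-1/2)/(\mu-1/p+1/s)}$, which is what ties the $R_{N,\delta}^{(r)}$-equivalences together.

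For the upper bounds I would note that the truncation method $\mathcal{D}^{(r)}_N$ of (\ref{ModVer}) uses the index set $\Omega=[r,N]$, hence $\mathcal{D}^{(r)}_N\in\Psi_N\subset\Psi$ and consumes $\card([r,N])\asymp N$ perturbed Fourier--Chebyshev coefficients. Theorem~\ref{Th1} then gives, for $N\asymp\delta^{-1/(\mu-1/p+1/s)}$,
$$
e_\delta\big(W^{\mu}_{s},\mathcal{D}^{(r)}_N,L_{2,\omega},\ell_p\big)\le c\,\delta^{\frac{\mu-2r+1/s-1/2}{\mu-1/p+1/s}},
$$
and passing to the infimum over $\Psi_N$ (respectively $\Psi$) yields $R_{N,\delta}^{(r)}(W^{\mu}_{s},\Psi_N,L_{2,\omega},\ell_p)\le c\,\delta^{(\mu-2r+1/s-1/2)/(\mu-1/p+1/s)}$ and, a fortiori, ${\cal E}_{\delta}^{(r)}(W^{\mu}_{s},\Psi,L_{2,\omega},\ell_p)\le c\,\delta^{(\mu-2r+1/s-1/2)/(\mu-1/p+1/s)}$.

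For the lower bounds I would invoke Theorem~\ref{Th5.1}, which already provides ${\cal E}_{\delta}^{(r)}(W^{\mu}_{s},\Psi,L_{2,\omega},\ell_p)\ge\overline{c}_p\,\delta^{(\mu-2r+1/s-1/2)/(\mu-1/p+1/s)}$, and hence, since $\Psi_N\subset\Psi$, the same bound for $R_{N,\delta}^{(r)}$ for every $N$. Sandwiching the two bounds gives both displayed equivalences, while the order-optimality of (\ref{ModVer}) is exactly the statement that the method realizing the upper bound matches the lower bound in order. I do not anticipate a real obstacle: the analytic substance lives in Theorems~\ref{Th1} and \ref{Th5.1}, and all that is left is to track exponents and the admissible range of $\mu$. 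The one thing worth flagging is that Theorem~\ref{Th5.2} asserts the equivalence only for the balanced $N\asymp\delta^{-1/(\mu-1/p+1/s)}$; if one wanted the $N$-uniform lower bound $R_{N,\delta}^{(r)}(W^{\mu}_{s},\Psi_N,L_{2,\omega},\ell_p)\succeq N^{-\mu+2r-1/s+1/2}$ for arbitrary $N$ (the $L_{2,\omega}$-counterpart of Theorem~\ref{Th4.3}), one would rerun the $f_2$-construction, bounding $\|f_2^{(r)}\|_{2,\omega}$ from below by reordering the multiple sum and applying (\ref{Br}) rather than evaluating at $t=1$ --- this is not needed for the statement above.
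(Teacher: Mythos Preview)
Your proposal is correct and follows essentially the same route as the paper: the authors simply state that Theorem~\ref{Th5.2} ``follows immediately from Theorem~\ref{Th1} (upper bound) and Theorem~\ref{Th5.1} (lower bound),'' which is exactly the combination you describe, together with the trivial inclusion $\Psi_N\subset\Psi$ from (\ref{V}). Your additional remarks on the range of $\mu$ and the distinction from the $N$-uniform lower bound (handled separately in Theorem~\ref{Th5.3}) are accurate and go slightly beyond what the paper spells out.
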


\vskip 2mm

The assertion of Theorem \ref{Th5.2} follows immediately from Theorem \ref{Th1} (upper bound) and
Theorem \ref{Th5.1} (lower bound).
	
\vskip 2mm

\begin{remark} \label{2change1}
As follows from Theorem \ref{Th5.2}, narrowing the set of methods from $\Psi$
to $\Psi_N$ does not affect the optimal order of accuracy $O(\delta^{\frac{\mu-2r+1/s-1/2}{\mu-1/p+1/s}})$.
In other words, no differentiation method from $\Psi$ dealing with an arbitrary (possibly infinite)
amount of Galerkin information provides a higher order of accuracy than method (\ref{ModVer}).
\end{remark}

\vskip 2mm

Further, we establish the smallest value of $N$ at which one can achieve
the sharp (in the power scale) estimates for the quantity
$R_{N,\delta}^{(r)}$ in the integral metric.

\begin{theorem} \label{Th5.3}
Let $1\leq s< \infty$, $1\leq p \leq \infty$, $\mu>2r-1/s+1/2$.
Then for any $\delta>0$ and $N$ it holds
\begin{equation}  \label{combL}
R_{N,\delta}^{(r)}(W^{\mu}_{s},\Psi_N, L_{2,\omega}, \ell_p)
\ge c\, \max \left\{
\delta^{\frac{\mu-2r+1/s-1/2}{\mu-1/p+1/s}},\,\,
N^{-\mu+2r-1/s+1/2}\right\} .
\end{equation}
\end{theorem}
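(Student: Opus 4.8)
The plan is to follow the proof of Theorem~\ref{Th4.3} almost verbatim, replacing the uniform norm by the $L_{2,\omega}$-norm and, accordingly, the pointwise estimate at $t=1$ by a Parseval-type lower bound of the sort used in the proof of Theorem~\ref{Th5.1}. First I would fix an arbitrary $\hat\Omega$ with $\card(\hat\Omega)\le N$ among the points $k$, $r\le k<\infty$, and put
$$
\Lambda_N=\{k:\ N+r\le k\le 3N+r,\ k\notin\hat\Omega\},
$$
which can be taken to consist of $N$ points and satisfies $\hat\Omega\cap\Lambda_N=\varnothing$ (the interval $[N+r,3N+r]$ carries $2N+1$ integers, so at least $N+1$ survive). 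Set $f_2(t)=4^{-\mu}N^{-\mu-1/s}\sum_{k\in\Lambda_N}T_k(t)$. Exactly as in Theorem~\ref{Th4.3} one checks $\|f_2\|_{s,\mu}\le 1$ and $G_{\hat\Omega}(f_2)=G_{\hat\Omega}(-f_2)=\overline 0$.

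The one genuinely new ingredient is the bound $\|f_2^{(r)}\|_{2,\omega}\succeq N^{-\mu+2r-1/s+1/2}$. I would obtain it as in the proof of Theorem~\ref{Th5.1}: using (\ref{dif_ort}) and reordering the nested sums,
$$
f_2^{(r)}(t)=2^r4^{-\mu}N^{-\mu-1/s}\sum_{l_r\ge 0}\xi_{l_r}T_{l_r}(t)\Big(\sum_{\substack{k\in\Lambda_N\\ k\ge l_r+r}}k\,B^r_k\Big),
$$
where only those $k$ with $k\equiv l_r+r\pmod 2$ occur and $B^r_k\asymp k^{2r-2}$ by (\ref{Br}). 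By orthonormality of $\{T_{l_r}\}$ in $L_{2,\omega}$,
$$
\|f_2^{(r)}\|_{2,\omega}^2=2^{2r}4^{-2\mu}N^{-2\mu-2/s}\sum_{l_r\ge 0}|\xi_{l_r}|^2\Big(\sum_{\substack{k\in\Lambda_N\\ k\ge l_r+r}}k\,B^r_k\Big)^2,
$$
and, crucially, the summands $k\,B^r_k$ are positive, so no cancellation occurs. Restricting to $0\le l_r\le N$ (for which every $k\in\Lambda_N$ satisfies $k\ge l_r+r$), noting $k\asymp N$ and $B^r_k\asymp N^{2r-2}$ for $k\in\Lambda_N$, and that for each such $l_r$ at least $\succeq N$ of the indices $k\in\Lambda_N$ have the required parity, one gets $\sum_k k\,B^r_k\succeq N^{2r}$ and $\sum_{0\le l_r\le N}|\xi_{l_r}|^2\succeq N$, hence $\|f_2^{(r)}\|_{2,\omega}^2\succeq N^{-2\mu-2/s+4r+1}$, which is the claimed estimate.

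With this in hand, the rest is the standard two-function argument of Theorem~\ref{Th4.3}: taking $f_2^\delta:=f_2$ (a trivial $\delta$-perturbation, since the error is zero) and using $\psi^{(r)}G_{\hat\Omega}(f_2)=\psi^{(r)}G_{\hat\Omega}(-f_2)$, one gets $2\|f_2^{(r)}\|_{2,\omega}\le 2\,e_\delta(W_s^\mu,\psi^{(r)}G_{\hat\Omega},L_{2,\omega},\ell_p)$ for every $\psi^{(r)}G_{\hat\Omega}\in\Psi_{\hat\Omega}$; the arbitrariness of $\psi^{(r)}G_{\hat\Omega}$ and of $\hat\Omega$ then yields $R_{N,\delta}^{(r)}(W^{\mu}_{s},\Psi_N,L_{2,\omega},\ell_p)\succeq N^{-\mu+2r-1/s+1/2}$. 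Combining this with the $\delta$-dependent lower bound already provided by Theorem~\ref{Th5.1} produces the maximum in (\ref{combL}). The main obstacle I anticipate is precisely the lower estimate of $\|f_2^{(r)}\|_{2,\omega}$: unlike in the uniform metric one cannot simply evaluate at $t=1$, and one must verify that reordering the nested sums together with the parity restriction still leaves $\succeq N$ non-cancelling terms of size $\asymp N^{2r-1}$, even though $\Lambda_N$ is, in general, a non-contiguous set of frequencies.
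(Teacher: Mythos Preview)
Your proposal is correct and follows exactly the route the paper indicates (``similar to the proof of Theorem~\ref{Th4.3}''): the same auxiliary function $f_2$ supported on $\Lambda_N$ disjoint from $\hat\Omega$, the same two-function argument, and the $L_{2,\omega}$ lower bound for $\|f_2^{(r)}\|_{2,\omega}$ obtained via the reordering/Parseval device from the proof of Theorem~\ref{Th5.1}, combined at the end with the $\delta$-term from Theorem~\ref{Th5.1}. The one small imprecision is the claim that for \emph{every} $l_r\in[0,N]$ one has both $B^r_k\asymp N^{2r-2}$ and $\succeq N$ indices $k\in\Lambda_N$ of the required parity; in fact one should restrict to, say, $l_r\le N/2$ (so that $k-l_r\succeq N$ guarantees $B^r_k\succeq N^{2r-2}$) and to the parity class of $l_r$ for which at least $N/2$ elements of $\Lambda_N$ satisfy $k\equiv l_r+r\pmod 2$ --- both restrictions still leave $\asymp N$ admissible $l_r$, so the estimate goes through unchanged.
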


The proof of Theorem \ref{Th5.3} is similar to the proof of Theorem \ref{Th4.3}.

\vskip 2mm

\begin{corollary} \label{Cor4}
From (\ref{combL}) it follows that for any $N\ll \delta^{-1/(\mu+1/s-1/p)}$ the relation
$$
R_{N,\delta}^{(r)}(W^{\mu}_{s},\Psi_N, L_{2,\omega}, \ell_p)
\gg \delta^{\frac{\mu-2r+1/s-1/2}{\mu-1/p+1/s}}
$$
is satisfied. This means that for such $N$ the optimal order of accuracy cannot be achieved.
On the other hand, for all $N\ge c\, \delta^{-1/(\mu+1/s-1/p)}$ he optimal accuracy is achieved.
\end{corollary}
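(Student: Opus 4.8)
The plan is to read off both assertions directly from the combined lower bound (\ref{combL}) together with the matching upper bound already established in Theorem \ref{Th5.2}. Write $\beta := -\mu+2r-1/s+1/2$ and $\alpha := 1/(\mu+1/s-1/p)$; by the hypothesis $\mu>2r-1/s+1/2$ we have $\beta<0$, while $\alpha>0$. The key observation is that the two terms inside the maximum in (\ref{combL}) cross precisely at $N=\delta^{-\alpha}$: substituting gives $(\delta^{-\alpha})^{\beta}=\delta^{-\alpha\beta}=\delta^{\frac{\mu-2r+1/s-1/2}{\mu+1/s-1/p}}$, which is exactly the optimal order $\delta^{\frac{\mu-2r+1/s-1/2}{\mu-1/p+1/s}}$. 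Thus $\delta^{-\alpha}$ is the natural crossover threshold, and everything else reduces to comparing $N$ with it.

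First I would treat the regime $N\ll\delta^{-\alpha}$. Since $\beta<0$, the function $N\mapsto N^{\beta}$ is strictly decreasing, and the assumption $N/\delta^{-\alpha}\to 0$ gives $N^{\beta}/(\delta^{-\alpha})^{\beta}=(N/\delta^{-\alpha})^{\beta}\to\infty$. Hence $N^{\beta}\gg(\delta^{-\alpha})^{\beta}=\delta^{\frac{\mu-2r+1/s-1/2}{\mu-1/p+1/s}}$. Retaining only the second term of the maximum in (\ref{combL}) then yields $R_{N,\delta}^{(r)}\ge c\,N^{\beta}\gg\delta^{\frac{\mu-2r+1/s-1/2}{\mu-1/p+1/s}}$, which is precisely the stated blow-up showing that the optimal order cannot be reached for such small $N$.

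For the complementary regime $N\ge c\,\delta^{-\alpha}$ I would invoke monotonicity of the minimal radius in the information budget: because $\Psi_N\subset\Psi_{N'}$ whenever $N\le N'$, the map $N\mapsto R_{N,\delta}^{(r)}$ is non-increasing. By Theorem \ref{Th5.2} the choice $M\asymp\delta^{-\alpha}$ already attains the optimal order, i.e. $R_{M,\delta}^{(r)}\asymp\delta^{\frac{\mu-2r+1/s-1/2}{\mu-1/p+1/s}}$, realized by the truncation method (\ref{ModVer}). Consequently, for every $N\ge c\,\delta^{-\alpha}$ monotonicity gives the upper bound $R_{N,\delta}^{(r)}\le R_{M,\delta}^{(r)}\preceq\delta^{\frac{\mu-2r+1/s-1/2}{\mu-1/p+1/s}}$, while the first term in the maximum of (\ref{combL}) supplies the matching lower bound; together these yield $R_{N,\delta}^{(r)}\asymp\delta^{\frac{\mu-2r+1/s-1/2}{\mu-1/p+1/s}}$, i.e. the optimal accuracy is achieved. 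The whole argument is elementary once the crossover identity is in hand; the only point requiring care---and the main thing I would double-check---is this identity $(\delta^{-\alpha})^{\beta}=\delta^{\frac{\mu-2r+1/s-1/2}{\mu-1/p+1/s}}$, since it is what aligns the $N$-dependent term of (\ref{combL}) with the $\delta$-dependent optimal rate.
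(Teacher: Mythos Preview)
Your argument is correct and is exactly the reasoning the paper leaves implicit: the paper states Corollary~\ref{Cor4} without proof, attributing the first claim to (\ref{combL}) and tacitly relying on Theorem~\ref{Th5.2} for the second. Your treatment of both regimes---the crossover identity, the use of the $N^{\beta}$ term for the blow-up, and the monotonicity-in-$N$ argument together with the $\delta$-term of (\ref{combL}) for the optimal regime---matches the intended route precisely.
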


Thus, we get

{\begin{theorem} \label{ThNEW1}
Let $1\leq s< \infty$, $1\leq p \leq \infty$, $\mu>2r-1/s+1/2$.
The most economical (in order) choice of $N_{\min}$ gives the rule (\ref{optNp}),
for which it holds
$$
\min\limits_N R_{N,\delta}^{(r)}(W^{\mu}_{s},\Psi_N, L_{2,\omega}, \ell_p)
\asymp R_{N_{\min},\delta}^{(r)}(W^{\mu}_{s},\Psi_{N_{\min}}, L_{2,\omega}, \ell_p)
\asymp\delta^{\frac{\mu-2r+1/s-1/2}{\mu-1/p+1/s}} .
$$
\end{theorem}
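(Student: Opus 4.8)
The plan is to combine the lower bound of Theorem~\ref{Th5.3} with the upper bound of Theorem~\ref{Th1} (equivalently Corollary~\ref{Cor1}), exactly as Theorem~\ref{ThNEW} was obtained from Theorem~\ref{Th4.3} and Theorem~\ref{Th2} in the uniform-metric case. First I would fix the candidate rule $N_{\min}\asymp\delta^{-1/(\mu-1/p+1/s)}$, which is precisely (\ref{optNp}), and observe that with this choice both terms in the maximum appearing in (\ref{combL}) are of the same order: indeed $N_{\min}^{-\mu+2r-1/s+1/2}\asymp\delta^{(\mu-2r+1/s-1/2)/(\mu-1/p+1/s)}$, so that Theorem~\ref{Th5.3} already gives the matching lower bound
$$
R_{N_{\min},\delta}^{(r)}(W^{\mu}_{s},\Psi_{N_{\min}}, L_{2,\omega}, \ell_p)
\ge c\,\delta^{\frac{\mu-2r+1/s-1/2}{\mu-1/p+1/s}} .
$$

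Next I would supply the upper bound. Since $\Psi_{N_{\min}}\subset\Psi$ and the method $\mathcal{D}^{(r)}_{N_{\min}}$ from (\ref{ModVer}) belongs to $\Psi_{N_{\min}}$ (it uses $\Omega=[r,N_{\min}]$, so $\card(\Omega)\le N_{\min}$), Theorem~\ref{Th1} with $N\asymp\delta^{-1/(\mu-1/p+1/s)}$ yields
$$
R_{N_{\min},\delta}^{(r)}(W^{\mu}_{s},\Psi_{N_{\min}}, L_{2,\omega}, \ell_p)
\le e_\delta(W^{\mu}_{s}, \mathcal{D}^{(r)}_{N_{\min}}G_{[r,N_{\min}]}, L_{2,\omega}, \ell_p)
\le c\,\delta^{\frac{\mu-2r+1/s-1/2}{\mu-1/p+1/s}} ,
$$
which together with the previous display gives the asserted order equality for $R_{N_{\min},\delta}^{(r)}$. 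Finally, to obtain the first $\asymp$ in the statement, namely $\min_N R_{N,\delta}^{(r)}\asymp R_{N_{\min},\delta}^{(r)}$, I would argue by the two inequalities: the bound $\min_N R_{N,\delta}^{(r)}\le R_{N_{\min},\delta}^{(r)}$ is trivial, while the reverse $\min_N R_{N,\delta}^{(r)}\succeq\delta^{(\mu-2r+1/s-1/2)/(\mu-1/p+1/s)}$ follows from Theorem~\ref{Th5.1} (or from the first term in the maximum in (\ref{combL})), which bounds $R_{N,\delta}^{(r)}$ below by $c\,\delta^{(\mu-2r+1/s-1/2)/(\mu-1/p+1/s)}$ uniformly in $N$. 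Hence every $R_{N,\delta}^{(r)}$, and in particular the infimum over $N$, has the same order as $R_{N_{\min},\delta}^{(r)}$.

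Strictly speaking there is nothing hard here once Theorems~\ref{Th1}, \ref{Th5.1} and \ref{Th5.3} are in place; the only point requiring a word of care is the verification that at $N=N_{\min}$ the two competing quantities in (\ref{combL}) balance, i.e. that $\delta^{1/(\mu-1/p+1/s)}\cdot N_{\min}^{-\mu+2r-1/s+1/2}$ collapses to the stated power of $\delta$ after substituting (\ref{optNp}) — a routine exponent computation, identical in structure to the one carried out in the proof of Theorem~\ref{Th1}. The genuine content of the statement lies in Theorem~\ref{Th5.3}, whose proof (indicated to mirror that of Theorem~\ref{Th4.3}) is where the information-complexity lower bound — the impossibility of reaching the optimal order with fewer than $N_{\min}$ Galerkin coefficients — is actually established; the present theorem merely records the resulting optimal value of $N$.
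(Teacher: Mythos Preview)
Your proposal is correct and follows essentially the same route as the paper. The paper gives no separate proof for Theorem~\ref{ThNEW1}; it simply writes ``Thus, we get'' after Corollary~\ref{Cor4}, meaning the result is read off directly from Theorem~\ref{Th5.2} (matching upper and lower bounds at $N\asymp\delta^{-1/(\mu-1/p+1/s)}$) together with Theorem~\ref{Th5.3}/Corollary~\ref{Cor4} (no smaller $N$ can achieve the optimal order), which is exactly the combination you spell out.
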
}

\vskip 2mm

Now we consider separately the case where the error of the input data is measured in $L_{2,\omega}$.

\begin{theorem} \label{Th5.4}
Let $1\leq s< \infty$, $\mu>2r-1/s+1/2$. Then for any $\delta>0$
it holds
$$
E_{\delta}^{(r)}(W^{\mu}_{s}, {\cal M}, L_{2,\omega})
\geq \overline{c}_2 \, \delta^{\frac{\mu-2r+1/s-1/2}{\mu+1/s-1/2}} ,
$$
where $\overline{c}_2$ is defined by (\ref{ov_c_p}).
\end{theorem}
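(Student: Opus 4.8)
The plan is to mirror the proof of Theorem \ref{Th5.1} exactly, simply replacing the $\ell_p$-perturbation bound with the stronger $L_{2,\omega}$-perturbation bound and tracking which exponent of $N_1$ this produces. First I would recall that for the auxiliary function $f_1$ from \eqref{f_1} we already have, from the proof of Theorems \ref{Th4.1} and \ref{Th5.1}, the two facts $\|f_1\|_{s,\mu}\le 1$ and the lower estimate $\|f_1^{(r)}\|_{2,\omega}^2 \ge 2^{2r} 3^{-2\mu}(c^*)^2 N_1^{-2\mu+4r-2/s+1}$, i.e.\ $\|f_1^{(r)}\|_{2,\omega} \ge 2^r 3^{-\mu} c^* N_1^{-\mu+2r-1/s+1/2}$. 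The only thing that changes is the normalization of $N_1$: instead of calibrating $\|\overline{f_1}\|_{\ell_p}=\delta$, I calibrate the $L_{2,\omega}$-norm. Since $\|f_1\|_{2,\omega} = \|\overline{f_1}\|_{\ell_2}$ by Parseval, this amounts to choosing $N_1$ so that $3^{-\mu} N_1^{-\mu-1/s+1/2} = \delta$, that is $N_1 \asymp \delta^{-1/(\mu+1/s-1/2)}$.

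Next I would run the standard two-point argument: since $f_1, -f_1 \in W^\mu_s$ and $\|f_1\|_{2,\omega}=\delta$, the zero function $0$ is a legitimate $\delta$-approximation in $L_{2,\omega}$ of both $f_1$ and $-f_1$. Hence for any solution operator $m^{(r)}\in\mathcal M$ the triangle inequality gives
$$
2\|f_1^{(r)}\|_{C} \le \|f_1^{(r)} - m^{(r)}0\|_{C} + \|-f_1^{(r)} - m^{(r)}0\|_{C} \le 2\, E_\delta^{(r)}(W^\mu_s,\mathcal M, L_{2,\omega}),
$$
wait — here the target metric is $L_{2,\omega}$, so I must be careful: I should use $\|f_1^{(r)}\|_{2,\omega}$ rather than the $C$-norm on the left. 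The correct chain is
$$
2\|f_1^{(r)}\|_{2,\omega} \le \|f_1^{(r)} - m^{(r)}0\|_{2,\omega} + \|-f_1^{(r)} - m^{(r)}0\|_{2,\omega},
$$
so that $\|f_1^{(r)}\|_{2,\omega} \le E_\delta^{(r)}(W^\mu_s,\mathcal M, L_{2,\omega})$ after taking the infimum over $m^{(r)}$. Substituting the lower bound on $\|f_1^{(r)}\|_{2,\omega}$ and the chosen value of $N_1$ then yields $E_\delta^{(r)} \ge \overline{c}_2\, \delta^{(\mu-2r+1/s-1/2)/(\mu+1/s-1/2)}$ with $\overline{c}_2$ of the form \eqref{ov_c_p} (now with $p=2$, so the exponent in the constant reads $3^{-\mu(2r+1/2-1/2)/(\mu+1/s-1/2)} = 3^{-\mu\,2r/(\mu+1/s-1/2)}$ times $c^*$).

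I expect essentially no obstacle here; the work is entirely bookkeeping of exponents. The one point requiring mild care is making sure the $L_{2,\omega}$-calibration $N_1 \asymp \delta^{-1/(\mu+1/s-1/2)}$ is inserted consistently into the lower bound for $\|f_1^{(r)}\|_{2,\omega}$, and that the hypothesis $\mu > 2r - 1/s + 1/2$ guarantees the resulting exponent $(\mu-2r+1/s-1/2)/(\mu+1/s-1/2)$ is positive so that the bound is nontrivial as $\delta \to 0$. Since $f_1$ already satisfies all the needed properties (its membership in $W^\mu_s$ and the derivative estimate were verified in the proofs of Theorems \ref{Th4.1} and \ref{Th5.1}), the proof is just: recall $f_1$, recalibrate $N_1$ via Parseval, apply the two-point lower-bound scheme in the $L_{2,\omega}$-metric, and simplify.
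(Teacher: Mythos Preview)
Your proposal is correct and follows essentially the same route as the paper: use the auxiliary function $f_1$ from \eqref{f_1}, invoke Parseval to calibrate $\|f_1\|_{2,\omega}=\|\overline{f_1}\|_{\ell_2}=\delta$ (this is exactly the $p=2$ choice of $N_1$ in \eqref{N_1}), take the lower bound for $\|f_1^{(r)}\|_{2,\omega}$ already established in the proof of Theorem~\ref{Th5.1}, and run the two-point argument against an arbitrary $m^{(r)}\in\mathcal M$ in the $L_{2,\omega}$-metric. The paper itself merely states that the proof ``shares many of the same principles as Theorem~\ref{Th4.4}'', which is precisely this scheme; your mid-course correction from $\|\cdot\|_C$ to $\|\cdot\|_{2,\omega}$ lands you on the right version.
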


The technique of the proof of Theorem \ref{Th5.4} shares many of the same principles as Theorem \ref{Th4.4}.

The combination of Theorem \ref{Th1} (upper bound for $p=2$), Theorem \ref{Th5.4} (lower bound) and relation (\ref{IV})
yields the following statement.

\begin{theorem} \label{Th5.5}
Let $1\leq s< \infty$, $\mu>2r-1/s+1/2$.
Then for any $\delta>0$ it holds
$$
E^{(r)}_{\delta}(W^{\mu}_{s}, {\mathcal M}, L_{2,\omega})
\asymp {\cal E}_{\delta}^{(r)}(W^{\mu}_{s},\Psi, L_{2,\omega}, \ell_2)
\asymp \delta^{\frac{\mu-2r+1/s-1/2}{\mu+1/s-1/2}} .
$$
Moreover, for any $\delta>0$ and $N \asymp \delta^{-\frac{1}{\mu+1/s-1/2}}$ it holds
$$
R_{N,\delta}^{(r)}(W^{\mu}_{s},\Psi_N, L_{2,\omega}, \ell_2)
\asymp \delta^{\frac{\mu-2r+1/s-1/2}{\mu+1/s-1/2}}
\asymp N^{-\mu+2r-1/s+1/2} .
$$
The order-optimal bounds of $E^{(r)}_{\delta}$, ${\cal E}^{(r)}_{\delta}$, $R_{N,\delta}^{(r)}$
are implemented by the method (\ref{ModVer}).
\end{theorem}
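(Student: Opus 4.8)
The plan is to obtain all three asymptotic relations at once by sandwiching the quantities $E^{(r)}_\delta$, ${\cal E}^{(r)}_\delta$ and $R^{(r)}_{N,\delta}$ between a lower bound coming from Theorem~\ref{Th5.4} and an upper bound coming from Theorem~\ref{Th1} specialised to $p=2$, and then invoking the inequality chain (\ref{IV}) to propagate these bounds across the three quantities. The one conceptual point that must be made explicit is that the $\ell_2$-noise model (\ref{perturbation2}) and the $L_{2,\omega}$-noise model coincide: by Parseval's identity for the orthonormal system $\{T_k\}$, if $\xi_k=\langle f-f^\delta,T_k\rangle$ then $\|\overline{\xi}\|_{\ell_2}=\|f-f^\delta\|_{2,\omega}$, so a $\delta$-perturbation in $L_{2,\omega}$ is exactly a $\delta$-perturbation of the Fourier--Chebyshev coefficients in $\ell_2$, and conversely. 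This is what makes the three settings comparable.

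For the upper bound I would first fix $N\asymp\delta^{-1/(\mu+1/s-1/2)}$, i.e.\ the rule of Theorem~\ref{Th1} with $p=2$. The truncation method $\mathcal{D}^{(r)}_N$ in (\ref{ModVer}) uses $\Omega=[r,N]$, hence at most $\card([r,N])\le N$ Fourier--Chebyshev coefficients, so it is an algorithm from $\Psi_N\subset\Psi\subset{\mathcal M}$. Given any $f\in W^\mu_s$ and any $f^\delta$ with $\|f-f^\delta\|_{2,\omega}\le\delta$, Parseval turns this into $\|\overline{\xi}\|_{\ell_2}\le\delta$, so the hypotheses of Theorem~\ref{Th1} are met (note that the standing condition $\mu>2r-1/s+1/2$ is precisely the one required there), and Theorem~\ref{Th1} yields $\|f^{(r)}-\mathcal{D}^{(r)}_N f^\delta\|_{2,\omega}\le c\,\delta^{\frac{\mu-2r+1/s-1/2}{\mu+1/s-1/2}}$ uniformly in $f$ and $f^\delta$. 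Passing to the suprema over $\|f\|_{s,\mu}\le1$ and over the admissible perturbations, and then to the infima over the respective classes of methods, this single estimate bounds $E^{(r)}_\delta$, ${\cal E}^{(r)}_\delta$ and $R^{(r)}_{N,\delta}$ from above by $c\,\delta^{\frac{\mu-2r+1/s-1/2}{\mu+1/s-1/2}}$, and simultaneously shows that the order-optimal bounds are attained by the method (\ref{ModVer}).

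For the lower bound I would apply (\ref{IV}), namely $R^{(r)}_{N,\delta}(W^\mu_s,\Psi_N,L_{2,\omega},\ell_2)\ge{\cal E}^{(r)}_\delta(W^\mu_s,\Psi,L_{2,\omega},\ell_2)\ge E^{(r)}_\delta(W^\mu_s,{\mathcal M},L_{2,\omega})$, followed by Theorem~\ref{Th5.4}, which gives $E^{(r)}_\delta(W^\mu_s,{\mathcal M},L_{2,\omega})\ge\overline{c}_2\,\delta^{\frac{\mu-2r+1/s-1/2}{\mu+1/s-1/2}}$. Combined with the upper bound, this pins down all three quantities to the claimed order. Finally, for the second line of the theorem I would substitute $\delta\asymp N^{-(\mu+1/s-1/2)}$, which is equivalent to $N\asymp\delta^{-1/(\mu+1/s-1/2)}$, into the exponent, obtaining $\delta^{\frac{\mu-2r+1/s-1/2}{\mu+1/s-1/2}}\asymp N^{-\mu+2r-1/s+1/2}$. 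There is no genuine obstacle here: all the analytic content already resides in Lemma~\ref{lemma_BoundErrHC}, Lemma~\ref{lemma_BoundPertHC}, Theorem~\ref{Th1} and Theorem~\ref{Th5.4}; the only care needed is the bookkeeping of which noise model is in force via Parseval, and checking that the exponent $\mu-2r+1/s-1/2$ is positive under the hypothesis $\mu>2r-1/s+1/2$ so that the power of $\delta$ tends to $0$.
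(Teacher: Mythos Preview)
Your proposal is correct and follows exactly the paper's own approach: the paper states that Theorem~\ref{Th5.5} follows from ``the combination of Theorem~\ref{Th1} (upper bound for $p=2$), Theorem~\ref{Th5.4} (lower bound) and relation~(\ref{IV})'', which is precisely the sandwich you describe. Your explicit remark on Parseval's identity, linking the $L_{2,\omega}$-noise model in the definition of $E^{(r)}_\delta$ with the $\ell_2$-noise model used in (\ref{IV}) and Theorem~\ref{Th1}, is a welcome clarification that the paper leaves implicit.
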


\vskip 2mm

\begin{remark} \label{2change2}
As follows from Theorem \ref{Th5.5}, narrowing the set of methods from ${\mathcal M}$
to $\Psi_N$ and $\Psi$ does not affect the optimal order of accuracy $O(\delta^{\frac{\mu-2r+1/s-1/2}{\mu+1/s-1/2}})$.
In other words, no mapping from $L_{2,\omega}$ into $L_{2,\omega}$ provides a higher order of accuracy than the method (\ref{ModVer}).
\end{remark}

\section{Numerical summation}   \label{num_prob}
		
If we put $r=0$ in (\ref{r_deriv}) and in the definitions of $E^{(r)}_{\delta}$, ${\cal E}^{(r)}_{\delta}$, $R_{N,\delta}^{(r)}$, then the problem of numerical differentiation is transformed into a problem of numerical summation.
Our further research is devoted to estimating the quantities $E^{(0)}_{\delta}$, ${\cal E}^{(0)}_{\delta}$, $R_{N,\delta}^{(0)}$,
which characterizes the optimal recovery
and information complexity for the numerical summation problem.
It should be noted that numerical summation as an unstable problem was considered in \cite{Tikh1964},
where the problem of ensuring the stability of approximations for periodic functions with imprecisely specified
Fourier coefficients was first studied.
It is well known that the stability of this problem depends on the metrics of the spaces in which it is studied.
		Thus, in particular, it turned out (see, for example, \cite{TAr1977}) that for periodic functions in the case
		of measuring the error of the input data in $\ell_2$, the summation problem is well-posed if the accuracy
		of the approximation is estimated in $L_2$-norm, and unstable if the approximation accuracy is estimated in $C$.
		The results presented below not only confirm these conclusions for non-periodic functions, but also allow us to establish that
		(and how) the stability of the problem is affected by the values of the parameters $p$, $s$ and the metric of the space $X$.
		Thus, Theorems \ref{Sum_C}, \ref{2Sum_C}, \ref{Sum_L} and \ref{2Sum_L} make it possible to create a clear view of the conditions
		of stability and instability in the problem of numerical summation.
		
		For numerical summation of functions $f\in W^{\mu}_{s}$ representable as the series (\ref{function}),
		we use the following variant of truncation method
		\begin{equation} \label{ModVer1}
			\mathcal{D}_N^{(0)} f^\delta(t) = \sum_{k=0}^{N} \langle f^\delta, T_{k}\rangle
			T^{}_k(t) .
		\end{equation}
		
		\vskip 2mm
		
The following two statements contain order-optimal estimates of the quantities
$E^{(0)}_{\delta}$, ${\cal E}^{(0)}_{\delta}$, $R_{N,\delta}^{(0)}$ in the uniform metric.
		
\begin{theorem} \label{Sum_C}
Let $1\leq s< \infty$, $\mu>1-1/s$, $1\leq p \leq \infty$.
Then for any $\delta>0$ it holds
$$
{\cal E}_{\delta}^{(0)}(W^{\mu}_{s},\Psi, C, \ell_p)
\asymp \delta^{\frac{\mu+1/s-1}{\mu-1/p+1/s}} .
$$
Moreover, for any $\delta>0$ and $N \asymp \delta^{-\frac{1}{\mu-1/p+1/s}}$ it holds
$$
R_{N,\delta}^{(0)}(W^{\mu}_{s},\Psi_N, C, \ell_p)
\asymp \delta^{\frac{\mu+1/s-1}{\mu-1/p+1/s}}
\asymp N^{-\mu-1/s+1} .
$$
The order-optimal bounds are implemented by the method $\mathcal{D}_N^{(0)}$ (\ref{ModVer1}).
\end{theorem}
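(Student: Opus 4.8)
The plan is to obtain Theorem \ref{Sum_C} as the $r=0$ specialization of the apparatus already built for numerical differentiation in Sections \ref{up_bound_C} and \ref{opt_C}, verifying along the way that every hypothesis and every representation degenerates correctly. Recall that putting $r=0$ turns $f^{(r)}$ into $f$, the operator $\mathcal{D}_N^{(r)}$ into $\mathcal{D}_N^{(0)}$ of (\ref{ModVer1}), and the restriction $\mu>2r-1/s+1$ into $\mu>1-1/s$, which is precisely the hypothesis of the theorem. The nested parity-restricted sums appearing in the representations (\ref{first_dif})--(\ref{RHS}) simply disappear when $r=0$, so the combinatorial part of the earlier proofs trivializes rather than becomes harder.

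First I would establish the upper bound. Lemmas \ref{lemma_BoundErrHCC} and \ref{lemma_BoundPertHCC} remain valid for $r=0$: since $f-\mathcal{D}_N^{(0)}f=\sum_{k>N}\langle f,T_k\rangle T_k$, one has $\|f-\mathcal{D}_N^{(0)}f\|_C\le c\sum_{k>N}|\langle f,T_k\rangle|$, and H\"older together with $\mu>1-1/s$ gives $c\|f\|_{s,\mu}N^{-\mu+1-1/s}$; likewise $\|\mathcal{D}_N^{(0)}f-\mathcal{D}_N^{(0)}f^\delta\|_C\le c\sum_{k=0}^N|\xi_k|\le c\delta N^{1-1/p}$ by H\"older. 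Inserting these into the proof of Theorem \ref{Th2} with $r=0$ yields $\|f-\mathcal{D}_N^{(0)}f^\delta\|_C\le c\big(N^{-\mu-1/s+1}+\delta N^{1-1/p}\big)$, and balancing the two terms at $N\asymp\delta^{-1/(\mu-1/p+1/s)}$ produces the order $O\big(\delta^{(\mu+1/s-1)/(\mu-1/p+1/s)}\big)$, which equals $N^{-\mu-1/s+1}$. This simultaneously shows that the order-optimal bounds are implemented by the method (\ref{ModVer1}).

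For the lower bound I would reuse the perturbation/antisymmetry device of Theorems \ref{Th4.1} and \ref{Th4.3}. With $f_1(t)=3^{-\mu}N_1^{-\mu-1/s}\sum_{k=N_1}^{2N_1-1}T_k(t)$ and $N_1$ fixed by $3^{-\mu}N_1^{-\mu+1/p-1/s}=\delta$, one checks $\|f_1\|_{s,\mu}\le1$, $\|\overline{f_1}\|_{\ell_p}=\delta$, and, since $T_k(1)=\sqrt{2/\pi}$ for $k\ge1$, $\|f_1\|_C\ge|f_1(1)|\succeq N_1^{-\mu-1/s+1}\asymp\delta^{(\mu+1/s-1)/(\mu-1/p+1/s)}$. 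As $\overline{0}$ is a $\delta$-perturbation of both $\overline{f_1}$ and $-\overline{f_1}$, the triangle inequality gives ${\cal E}_\delta^{(0)}(W_s^\mu,\Psi,C,\ell_p)\ge\|f_1\|_C$, hence the $\delta$-power lower bound, and via (\ref{V}) the same for $R_{N,\delta}^{(0)}$. For the $N$-dependent part, given any $\hat\Omega$ with $\card(\hat\Omega)\le N$ I would take $N$ indices $\Lambda_N\subset[N,3N]$ disjoint from $\hat\Omega$, set $f_2=4^{-\mu}N^{-\mu-1/s}\sum_{k\in\Lambda_N}T_k$, note $\|f_2\|_{s,\mu}\le1$, $G_{\hat\Omega}(f_2)=\overline{0}$, and $\|f_2\|_C\ge|f_2(1)|\ge cN^{-\mu-1/s+1}$, giving $R_{N,\delta}^{(0)}\ge cN^{-\mu-1/s+1}$. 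Combining the two bounds gives $R_{N,\delta}^{(0)}\succeq\max\{\delta^{(\mu+1/s-1)/(\mu-1/p+1/s)},N^{-\mu-1/s+1}\}$, and at $N\asymp\delta^{-1/(\mu-1/p+1/s)}$ the two expressions coincide and match the upper bound. I do not expect a real obstacle: the only point requiring care is that the numerator exponent $\mu+1/s-1$ is positive exactly under the assumed $\mu>1-1/s$ (so the estimates are non-degenerate) and that, for $r=0$, every nested-sum representation borrowed from the differentiation arguments collapses to the plain partial sum; once this is observed, the proof is a transcription of Theorems \ref{Th2}, \ref{Th4.1} and \ref{Th4.3} with $r$ set to $0$.
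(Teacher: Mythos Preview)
Your proposal is correct and follows essentially the same route as the paper, which simply states that Theorem \ref{Sum_C} ``can be proven in the same way as Theorem \ref{Th4.2}'' (itself a combination of Theorem \ref{Th2} for the upper bound and Theorem \ref{Th4.1} for the lower bound). Your observation that the nested parity-restricted sums collapse at $r=0$ and that the smoothness condition $\mu>2r-1/s+1$ becomes exactly $\mu>1-1/s$ is precisely what makes the specialization go through; the additional $f_2$ argument borrowed from Theorem \ref{Th4.3} is not strictly required for the stated $R_{N,\delta}^{(0)}$ claim (since the lower bound from $f_1$ together with (\ref{V}) already suffices at $N\asymp\delta^{-1/(\mu-1/p+1/s)}$), but it does no harm.
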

		
Theorem \ref{Sum_C} can be proven in the same way as Theorem \ref{Th4.2}.

\vskip 2mm

\begin{theorem} \label{2Sum_C}
Let $1\leq s< \infty$, $\mu>1-1/s$.
Then for any $\delta>0$ it holds
$$
{\cal E}_{\delta}^{(0)}(W^{\mu}_{s},\Psi, C, \ell_2)
\asymp E^{(0)}_{\delta}(W^{\mu}_{s}, {\mathcal M}, C)
\asymp \delta^{\frac{\mu+1/s-1}{\mu+1/s-1/2}} .
$$
Moreover, for any $\delta>0$ and $N \asymp \delta^{-\frac{1}{\mu+1/s-1/2}}$ it holds
$$
R_{N,\delta}^{(0)}(W^{\mu}_{s},\Psi_N, C, \ell_2)
\asymp \delta^{\frac{\mu+1/s-1}{\mu+1/s-1/2}}
\asymp N^{-\mu-1/s+1} .
$$
The order-optimal bounds are implemented by the method (\ref{ModVer1}).
\end{theorem}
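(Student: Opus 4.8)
The plan is to follow the template established for the differentiation problem, specializing the general constructions to $r=0$ and to the Hilbert-type error level $\ell_2$. For the upper bounds, I would invoke Theorem \ref{Th2} with $r=0$ and $p=2$: the truncation method $\mathcal{D}_N^{(0)}$ in \eqref{ModVer1} satisfies $\|f-\mathcal{D}_N^{(0)}f^\delta\|_C \le c\,\delta^{\frac{\mu+1/s-1}{\mu+1/s-1/2}}$ under the choice $N\asymp\delta^{-1/(\mu+1/s-1/2)}$, since setting $r=0$ in the exponent $\frac{\mu-2r+1/s-1}{\mu-1/p+1/s}$ with $p=2$ gives exactly $\frac{\mu+1/s-1}{\mu+1/s-1/2}$. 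This handles the upper estimate for all three quantities simultaneously via the chain \eqref{IV}, since $\Psi_N\subset\Psi\subset\mathcal M$ and the method \eqref{ModVer1} lies in $\Psi_N$ for this $N$. The hypothesis $\mu>1-1/s$ is precisely the $r=0$ case of the condition $\mu>2r-1/s+1$ needed in Lemma \ref{lemma_BoundErrHCC} and Theorem \ref{Th2}, so the smoothness assumption is exactly what is required.

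For the lower bounds I would reuse the auxiliary function $f_1$ from \eqref{f_1} with $r=0$, i.e. $f_1(t)=3^{-\mu}N_1^{-\mu-1/s}\sum_{k=N_1}^{2N_1-1}T_k(t)$, where $N_1$ is chosen so that $3^{-\mu}N_1^{-\mu+1/2-1/s}=\delta$ (the $p=2$ instance of \eqref{N_1}). As in the proof of Theorem \ref{Th4.1}, one checks $\|f_1\|_{s,\mu}\le1$ and $\|\overline{f_1}\|_{\ell_2}=\|f_1\|_{2,\omega}=\delta$. The only genuinely new computation is the lower estimate of $\|f_1^{(0)}\|_C=\|f_1\|_C$: here there is no differentiation and no nested $B^r_k$ sums, so one simply evaluates at $t=1$ to get $\|f_1\|_C\ge|f_1(1)|=\frac{\sqrt2}{\sqrt\pi}\,3^{-\mu}N_1^{-\mu-1/s}\cdot N_1 = \frac{\sqrt2}{\sqrt\pi}\,3^{-\mu}N_1^{-\mu-1/s+1}$, which by the choice of $N_1$ equals $c\,\delta^{\frac{\mu+1/s-1}{\mu+1/s-1/2}}$. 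The symmetrization argument (using that $\overline 0$ is a $\delta$-perturbation of both $\overline{f_1}$ and $-\overline{f_1}$, and that any $m^{(0)}\in\mathcal M$ or $\psi^{(0)}G\in\Psi$ returns a single element on input $\overline0$) then gives $E_\delta^{(0)}(W^\mu_s,\mathcal M,C)\ge\|f_1\|_C\ge c\,\delta^{\frac{\mu+1/s-1}{\mu+1/s-1/2}}$ verbatim as in Theorems \ref{Th4.1} and \ref{Th4.4}, and the same bound for ${\cal E}_\delta^{(0)}$ and $R_{N,\delta}^{(0)}$ follows by \eqref{IV}.

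For the second displayed relation — the minimal-radius statement with $N\asymp\delta^{-1/(\mu+1/s-1/2)}$ — I would combine the already-established upper bound with the lower bound $R_{N,\delta}^{(0)}\succeq N^{-\mu-1/s+1}$ obtained by adapting the proof of Theorem \ref{Th4.3}: for an arbitrary $\hat\Omega$ with $\card(\hat\Omega)\le N$ one picks $N$ fresh indices $\Lambda_N\subset[N,3N]\setminus\hat\Omega$, forms $f_2(t)=4^{-\mu}N^{-\mu-1/s}\sum_{k\in\Lambda_N}T_k(t)$, verifies $\|f_2\|_{s,\mu}\le1$, and bounds $\|f_2\|_C\ge|f_2(1)|\ge c'N^{-\mu-1/s+1}$; the symmetrization then yields $R_{N,\delta}^{(0)}\ge c'N^{-\mu-1/s+1}$ for every $N$. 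Taking the maximum of the two lower bounds and noting that they balance at $N\asymp\delta^{-1/(\mu+1/s-1/2)}$ gives the claimed order, matched by \eqref{ModVer1}. I anticipate no serious obstacle: every step is a transcription of an argument already carried out in the paper, and the only place where care is needed is confirming that all the smoothness and exponent inequalities degrade correctly at $r=0$ (in particular that $\mu>1-1/s$ keeps the relevant Hölder-type series convergent and keeps the exponent $\frac{\mu+1/s-1}{\mu+1/s-1/2}$ positive).
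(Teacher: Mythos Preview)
Your proposal is correct and follows essentially the same route as the paper, which simply states that Theorem \ref{2Sum_C} ``can be proven in the same way as Theorem \ref{Th4.5}'' --- i.e., combine the upper bound from Theorem \ref{Th2} at $r=0$, $p=2$, the lower bound for $E_\delta^{(0)}$ obtained from the auxiliary function $f_1$ exactly as you describe, and the chain \eqref{IV}. Your additional adaptation of Theorem \ref{Th4.3} to bound $R_{N,\delta}^{(0)}$ from below by $N^{-\mu-1/s+1}$ is not strictly needed for the statement as written (since for $N\asymp\delta^{-1/(\mu+1/s-1/2)}$ the lower bound already follows from $R_{N,\delta}^{(0)}\ge E_\delta^{(0)}$), but it does no harm.
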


Theorem \ref{2Sum_C} can be proven in the same way as Theorem \ref{Th4.5}.

\vskip 2mm

\begin{remark} \label{norma_C}
As follows from Theorem \ref{Sum_C},
for $p=1$ and any $1\le s< \infty$, the summation problem in the metric of $C$ is well-posed.
This means that for the indicated values of $p$ and $s$, the problem is also well-posed in any weaker metric,
for example, in $L_q$, $1\le q\le \infty$,
where by $L_{q}$ we mean, as usual, the space of real-valued functions $f(t)$ with the norm
$\|f\|_{q} ;= \left(\int_{-1}^1 |f(t)|^q\, dt\right)^{1/q} < \infty$.
But even in these cases, it is necessary to adjust the levels of discretization $N$ and perturbation $\delta$
to prevent the saturation effect.
\end{remark}
		
		\vskip 2mm
		
The following two statements contain order-optimal estimates of the quantities
$E^{(0)}_{\delta}$, ${\cal E}^{(0)}_{\delta}$, $R_{N,\delta}^{(0)}$ in the integral metric.
		
\begin{theorem} \label{Sum_L}
Let $2\leq s< \infty$, $\mu>1/2-1/s$, $2\leq p \leq \infty$.
Then for any $\delta>0$ it holds
$$
{\cal E}_{\delta}^{(0)}(W^{\mu}_{s},\Psi, L_{2,\omega}, \ell_p)
\asymp \delta^{\frac{\mu+1/s-1/2}{\mu-1/p+1/s}} .
$$
Moreover, for any $\delta>0$ and $N \asymp \delta^{-\frac{1}{\mu-1/p+1/s}}$ it holds
$$
R_{N,\delta}^{(0)}(W^{\mu}_{s},\Psi_N, L_{2,\omega}, \ell_p)
\asymp \delta^{\frac{\mu+1/s-1/2}{\mu-1/p+1/s}}
\asymp N^{-\mu-1/s+1/2} .
$$
The order-optimal bounds are implemented by the method $\mathcal{D}^{(0)}_{N}$ (\ref{ModVer1}).
\end{theorem}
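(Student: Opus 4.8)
The plan is to follow the scheme of Theorem~\ref{Th5.2} with $r=0$: I would combine an
upper bound furnished by the truncation operator $\mathcal{D}_N^{(0)}$ of~(\ref{ModVer1}) (the
$r=0$ counterpart of Theorem~\ref{Th1}, built from the $r=0$ analogues of
Lemmas~\ref{lemma_BoundErrHC} and~\ref{lemma_BoundPertHC}) with a matching lower bound coming
from a single ``hardest'' function (the $r=0$ counterpart of Theorem~\ref{Th5.1}). Since
$\mathcal{D}_N^{(0)}\in\Psi_N\subset\Psi$, the upper bound serves both ${\cal E}_\delta^{(0)}$ and
$R_{N,\delta}^{(0)}$; the lower bound for ${\cal E}_\delta^{(0)}$ passes to $R_{N,\delta}^{(0)}$
via~(\ref{V}), which is available here because $2\le p\le\infty$.

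\emph{Upper bound.} For $r=0$ no nested sums appear: by Parseval,
\[
\|f-\mathcal{D}_N^{(0)}f\|_{2,\omega}^2=\sum_{k=N+1}^\infty|\langle f,T_k\rangle|^2,
\qquad
\|\mathcal{D}_N^{(0)}f-\mathcal{D}_N^{(0)}f^\delta\|_{2,\omega}^2=\sum_{k=0}^N|\langle f-f^\delta,T_k\rangle|^2 .
\]
For the first sum I would write $|\langle f,T_k\rangle|^2=(k^{\mu}|\langle f,T_k\rangle|)^{2}k^{-2\mu}$
and apply H\"older with exponents $s/2$ and $s/(s-2)$ (for $s=2$ one simply uses
$\sum_{k>N}|\langle f,T_k\rangle|^2\le N^{-2\mu}\|f\|_{2,\mu}^2$): the tail
$\sum_{k>N}k^{-2\mu s/(s-2)}$ converges precisely when $\mu>1/2-1/s$, and its contribution,
raised to the power $(s-2)/s$, is $\asymp N^{-2\mu+1-2/s}$, whence
$\|f-\mathcal{D}_N^{(0)}f\|_{2,\omega}\le c\|f\|_{s,\mu}N^{-\mu-1/s+1/2}$. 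For the second sum,
H\"older with exponents $p/2$ and $p/(p-2)$ (for $p=2$ this is immediate from~(\ref{perturbation2}))
gives $\|\mathcal{D}_N^{(0)}f-\mathcal{D}_N^{(0)}f^\delta\|_{2,\omega}\le c\delta N^{-1/p+1/2}$.
Adding the two bounds and choosing $N\asymp\delta^{-1/(\mu-1/p+1/s)}$ so as to balance them yields
$\|f-\mathcal{D}_N^{(0)}f^\delta\|_{2,\omega}\le c\,\delta^{(\mu+1/s-1/2)/(\mu-1/p+1/s)}\asymp N^{-\mu-1/s+1/2}$.

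\emph{Lower bound.} Following the proof of Theorem~\ref{Th5.1}, I would take
\[
f_1(t)=3^{-\mu}N_1^{-\mu-1/s}\sum_{k=N_1}^{2N_1-1}T_k(t),
\qquad
3^{-\mu}N_1^{-\mu+1/p-1/s}=\delta ,
\]
so that $\|f_1\|_{s,\mu}\le1$, $\|\overline{f_1}\|_{\ell_p}=\delta$, and, no differentiation being
involved, $\|f_1\|_{2,\omega}=3^{-\mu}N_1^{-\mu-1/s}\sqrt{N_1}\asymp\delta^{(\mu+1/s-1/2)/(\mu-1/p+1/s)}$.
As $\overline{0}$ is a $\delta$-perturbation of both $\overline{f_1}$ and $-\overline{f_1}$, the
triangle-inequality argument from the proof of Theorem~\ref{Th4.1} shows that
$e_\delta(W_s^\mu,\psi^{(0)}G,L_{2,\omega},\ell_p)\ge\|f_1\|_{2,\omega}$ for every
$\psi^{(0)}G\in\Psi$; hence ${\cal E}_\delta^{(0)}\ge c\,\delta^{(\mu+1/s-1/2)/(\mu-1/p+1/s)}$, and
$R_{N,\delta}^{(0)}\ge{\cal E}_\delta^{(0)}$ by~(\ref{V}). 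Matching this with the upper bound at
$N\asymp\delta^{-1/(\mu-1/p+1/s)}$ closes all the asserted $\asymp$-relations, the method
$\mathcal{D}_N^{(0)}$ realising them.

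The computations themselves are routine; the one point worth stressing is \emph{why} the ranges
narrow to $2\le s<\infty$, $2\le p\le\infty$. For $r\ge1$ the Chebyshev differentiation
identity~(\ref{dif_ort}) redistributes each coefficient over the lower modes, and the ensuing
nested sums let H\"older's inequality operate for all $1\le s,p<\infty$; for $r=0$ the error
instead splits, via Parseval, into pure $\ell_2$-blocks of coefficients (and of the noise), so
moving from $\ell_2$ to the scales $\|\cdot\|_{s,\mu}$ and $\ell_p$ forces the H\"older exponents
$s/2$ and $p/2$, i.e. $s\ge2$ and $p\ge2$. This is precisely the $L_2$-versus-$C$ well-posedness
dichotomy for numerical summation recalled at the beginning of the section.
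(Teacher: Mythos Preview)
Your proof is correct and follows exactly the route the paper indicates (``can be proven in the same way as Theorem~\ref{Th5.2}''): upper bound via the truncation method $\mathcal{D}_N^{(0)}$ balanced at $N\asymp\delta^{-1/(\mu-1/p+1/s)}$, matching lower bound via the auxiliary function $f_1$ and the standard triangle-inequality argument, with~(\ref{V}) transferring the lower bound to $R_{N,\delta}^{(0)}$. Your explicit identification of why the $r=0$ case forces $s\ge 2$ and $p\ge 2$ --- Parseval reduces both error terms to $\ell_2$-blocks of coefficients, so H\"older requires the exponents $s/2$ and $p/2$ --- is a detail the paper leaves implicit.
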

		
Theorem \ref{Sum_L} can be proven in the same way as Theorem \ref{Th5.2}.
\vskip 2mm
		
\begin{theorem} \label{2Sum_L}
Let $2\leq s< \infty$, $\mu>1/2-1/s$.
Then for any $\delta>0$ it holds
$$
E^{(0)}_{\delta}(W^{\mu}_{s}, {\mathcal M}, L_{2,\omega})
\asymp {\cal E}_{\delta}^{(0)}(W^{\mu}_{s},\Psi, L_{2,\omega}, \ell_2)
\asymp \delta .
$$
Moreover, for any $\delta>0$ and $N \asymp \delta^{-\frac{1}{\mu+1/s-1/2}}$ it holds
$$
R_{N,\delta}^{(0)}(W^{\mu}_{s},\Psi_N, L_{2,\omega}, \ell_2)
\asymp \delta
\asymp N^{-\mu-1/s+1/2} .
$$
The order-optimal bounds are implemented by the method (\ref{ModVer1}).
\end{theorem}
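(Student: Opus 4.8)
The plan is to mirror the structure of Theorem \ref{Th5.5} exactly, specialized to $r=0$ and with the observation that $r=0$ makes the differentiation operator trivial (the series \eqref{r_deriv} becomes the identity on $f$), so many of the combinatorial factors $B^r_k$, $(2r-1)!!$, etc. degenerate to constants. First I would record the chain of inequalities \eqref{IV} specialized to $r=0$, $X=L_{2,\omega}$, $p=2$, which reduces the task to proving a matching upper bound and lower bound, plus the assertion that \eqref{ModVer1} realizes the order. For the upper bound I invoke Theorem \ref{Th1} with $r=0$ and $p=2$: the exponent there is $\frac{\mu-2r+1/s-1/2}{\mu-1/p+1/s}$, which at $r=0$, $p=2$ collapses to $\frac{\mu+1/s-1/2}{\mu+1/s-1/2}=1$, giving $\|f-\mathcal{D}_N^{(0)}f^\delta\|_{2,\omega}\preceq\delta$ for $N\asymp\delta^{-1/(\mu+1/s-1/2)}$; note this also uses the hypothesis $\mu>1/2-1/s$ in place of $\mu>2r-1/s+1/2$, which is exactly the $r=0$ version of the smoothness condition needed in Lemma \ref{lemma_BoundErrHC}. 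The condition $s\ge 2$ enters precisely here: for the $L_{2,\omega}$ norm estimate of the truncation tail one needs the Fourier--Chebyshev coefficients to be controlled by an $\ell_s$ norm that embeds into $\ell_2$ after the weight is applied, so $s\ge2$ is what lets the H\"older step in the proof of Lemma \ref{lemma_BoundErrHC} go through with $p=2$ data and still land in $L_{2,\omega}$.

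For the lower bound I would reproduce the argument of Theorem \ref{Th5.1} (which in turn follows Theorem \ref{Th4.1}) with $r=0$. Take the auxiliary function $f_1(t)=3^{-\mu}N_1^{-\mu-1/s}\sum_{k=N_1}^{2N_1-1}T_k(t)$ with $N_1$ chosen from $3^{-\mu}N_1^{-\mu+1/2-1/s}=\delta$ (the $p=2$ instance of \eqref{N_1}), so that $\|f_1\|_{s,\mu}\le1$ and $\|f_1\|_{2,\omega}=\|\overline{f_1}\|_{\ell_2}=\delta$. Since $r=0$ we have $f_1^{(0)}=f_1$, and $\|f_1^{(0)}\|_{2,\omega}=\|\overline{f_1}\|_{\ell_2}=\delta$ directly from orthonormality of $\{T_k\}$ — there is no need for the $B^r_k$ computation \eqref{Br} at all. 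Then the standard symmetrization (using that $\overline{0}\in\ell_2$ is a $\delta$-perturbation of both $\pm\overline{f_1}$, and $G_{\hat\Omega}(\pm f_1)=\overline{0}$ for any admissible $\hat\Omega$, because the spectrum of $f_1$ can be pushed outside $\hat\Omega$ when $\card\hat\Omega\le N$) yields $e_\delta(W_s^\mu,\psi^{(0)}G_{\hat\Omega},L_{2,\omega},\ell_2)\ge\|f_1\|_{2,\omega}=\delta$, hence $E_\delta^{(0)}\ge c\,\delta$ and likewise ${\cal E}_\delta^{(0)}\succeq\delta$, $R_{N,\delta}^{(0)}\succeq\delta$. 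Combining with \eqref{IV}, all three quantities are $\asymp\delta$.

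For the second block of the statement — the minimal-$N$ claim $R_{N,\delta}^{(0)}\asymp\delta\asymp N^{-\mu-1/s+1/2}$ at $N\asymp\delta^{-1/(\mu+1/s-1/2)}$ — I would use the $r=0$, $p=2$ specialization of Theorem \ref{Th5.3}: the max there becomes $\max\{\delta,\,N^{-\mu-1/s+1/2}\}$, and at the stated $N$ the two terms balance, so the lower bound is $\succeq\delta$; the matching upper bound is the truncation estimate already cited. The equivalence $\delta\asymp N^{-\mu-1/s+1/2}$ is just the defining relation for $N$. I expect no genuine obstacle here, since every ingredient is a direct $r=0$ reading of a theorem proved earlier in the paper; the only point requiring a word of care is verifying that the hypotheses $s\ge2$, $p\ge2$, $\mu>1/2-1/s$ are exactly what the cited lemmas need at $r=0$ (in particular that $\mu>1/2-1/s$ forces the truncation tail exponent $-\mu+1/s-1/2<0$ so the series converges), and that the well-posedness reading — error in $\asymp\delta$, i.e. no amplification — is consistent with Remark \ref{norma_C}'s discussion for the $C$-metric case.
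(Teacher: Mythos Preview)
Your proposal is correct and follows the paper's approach exactly: the paper's proof of Theorem~\ref{2Sum_L} is literally ``can be proven in the same way as Theorem~\ref{Th5.5}'', and you have spelled out that specialization (Theorem~\ref{Th1} at $r=0$, $p=2$ for the upper bound; the $f_1$-symmetrization of Theorems~\ref{Th5.1}/\ref{Th5.4} for the lower bound; Theorem~\ref{Th5.3} for the $N$-dependence) with the correct observation that the exponent collapses to $1$.

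One small correction to your explanatory aside: your statement that ``$s\ge2$ is what lets the H\"older step \ldots\ go through'' is right in spirit, but the phrase ``an $\ell_s$ norm that embeds into $\ell_2$'' is backwards --- for $s\ge2$ it is $\ell_2\hookrightarrow\ell_s$, not the other way. The actual reason is that at $r=0$ the truncation error is simply $\bigl(\sum_{k>N}|\langle f,T_k\rangle|^2\bigr)^{1/2}$, and bounding this by $N^{-\mu-1/s+1/2}$ via H\"older with conjugate exponents $s/2$ and $s/(s-2)$ requires $s/2\ge1$; for $1\le s<2$ the sharp bound is only $N^{-\mu}$, which would change the $N$-rule in the second display. (Also, the relevant remark to cite at the end is Remark~\ref{norma_L}, not Remark~\ref{norma_C}, and the hypothesis here is $p=2$, not $p\ge2$.)
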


Theorem \ref{2Sum_L} can be proven in the same way as Theorem \ref{Th5.5}.

\vskip 2mm

\begin{remark} \label{norma_L}
As follows from Theorems \ref{Sum_L}, \ref{2Sum_L}
for $p=2$ and any $s\ge 2$, the summation problem in $L_{2,\omega}$-metric is well-posed.
This means that for the indicated values of $s$, the problem is also well-posed for any $1\le p\le 2$ and
in any weaker metric, for example in $L_q$, $L_{q,\omega}$, $1\le q\le 2$,
where by $L_{q,\omega}$ we mean, as usual, the space of real-valued functions $f(t)$ with the norm
$\|f\|_{q,\omega} ;= \left(\int_{-1}^1 \omega(t) |f(t)|^q\, dt\right)^{1/q} < \infty$.
\end{remark}
		
\section{Computational experiments}  \label{num_exp}

		Now we provide some numerical illustrations for our theoretical results.
		We perform a modeling in Mathlab 2022a for functions with different smoothness.
		
		\subsection{Example 1}
		
		In this example we consider the function  $f_1(t) = c(1-t^2)^{5/2}$ with $c=1/497.$
		It is easy to see that  $\|f_1\|_{s,\mu}\approx 1$ for $\mu = 5.4$ and $s=2$.
		
		For our experiment we take the perturbed data with random noise, which is simulated  by $\mathcal{F}^\delta=\mathcal{F}+c*\mbox{randn(size}(\mathcal{F})) \delta$, where $c=|\mathcal{F}|/(2\|\mathcal{F}\|_{\ell_2})$,
		randn and size are standard functions of the MATLAB system and $\mathcal{F}$ is a  vector of exact Fourier-Chebyshev coefficients.
		
		%
		
		%
		
		%

		Numerical experiments were carried out for the following error levels: $\delta= 10^{-4}, 10^{-5}, 10^{-6}$.
		Table \ref{tbl1}  presents  results of approximation of $f^{(1)}_1$ and  $f^{(2)}_1$ by the truncation method (\ref{ModVer}).
		The tables contain error estimations  in the  metrics of $L_{2,\omega}$ (column ErrorL2) and $C$ (column ErrorC)  and by  $N$
		we indicate the highest  degree of used Chebyshev polynomials.
		
		
		The figure \ref{Fig1} and \ref{Fig2} show results for $f^{(1)}_1$ and $f^{(2)}_1$ with different noise levels, respectively.

		\begin{table}[h!]
			\centering
			\caption{ Recovering derivatives $f^{(1)}_1$ and $f^{(2)}_1$   for random noise }
			\label{tbl1}
			\begin{tabular}{|c|c|c|c|c|c|c|}
				\hline
				
				& \multicolumn{3}{c|}{  Derivative $f^{(1)}_1$ }    	& \multicolumn{3}{c|}{  Derivative $f^{(2)}_1$ }
				\\ \hline
				$\delta$ & $10^{-4}$ & $10^{-5}$ & $10^{-6}$ & $10^{-4}$ & $10^{-5}$ & $10^{-6}$  \\ \hline
				ErrorL2        &  $5.8\cdot 10^{-5} $       &  $2 \cdot 10^{-5} $  &  $5.3  \cdot 10^{-6} $ &  $ 0.0068 $       &  $0.0014 $  &  $0.001 $ \\ \hline
				ErrorC        &  $2.3 \cdot 10^{-4} $       &  $9 \cdot 10^{-5} $  &  $2.6 \cdot10^{-5} $  &  $0.0221$       &  $0.0066$  &  $0.0051$  \\ \hline
				$N$  &  6     & 9   & 12 &  6     & 9   & 12   \\ \hline
			\end{tabular}
		\end{table}
		

		\begin{figure}[h!]
			\begin{minipage}[h]{0.3\linewidth}
				\center{\includegraphics[width=1\linewidth]{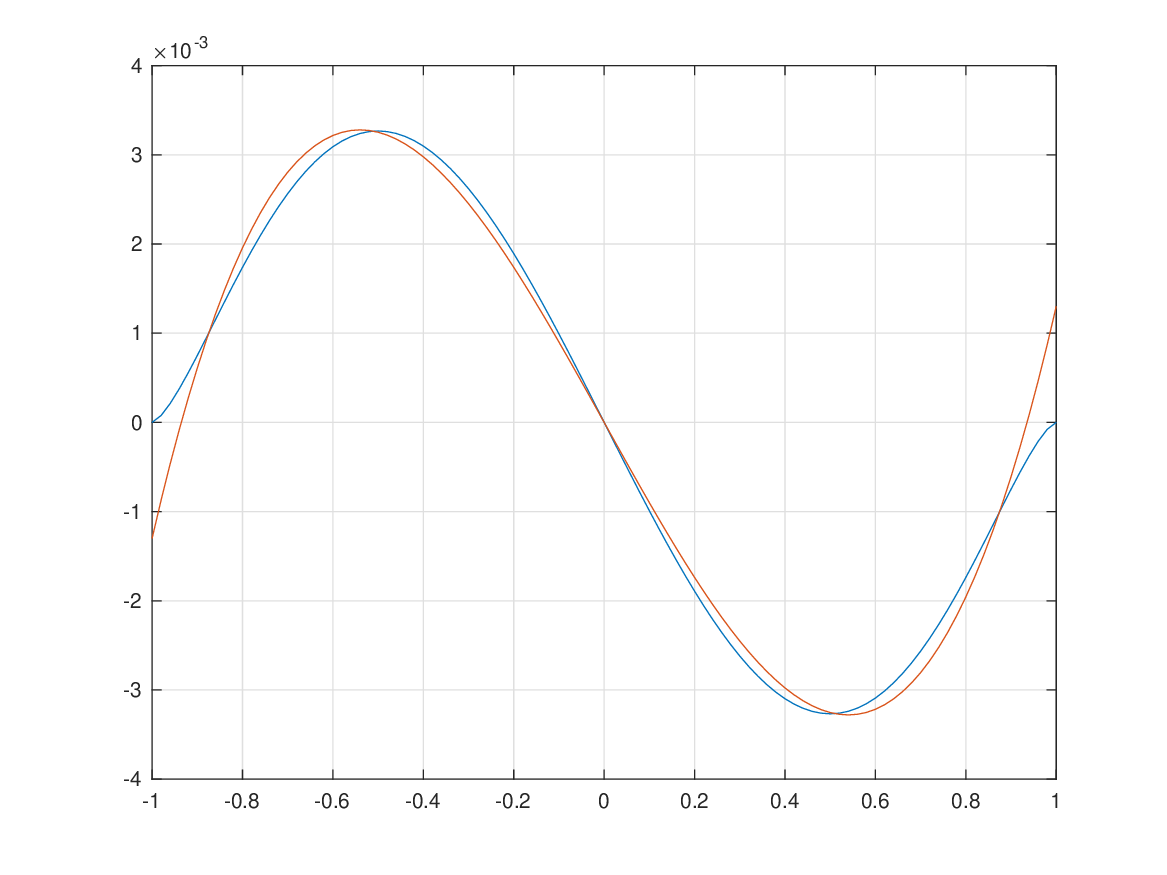} \\ a)}
			\end{minipage}
			\begin{minipage}[h]{0.3\linewidth}
				\center{\includegraphics[width=1\linewidth]{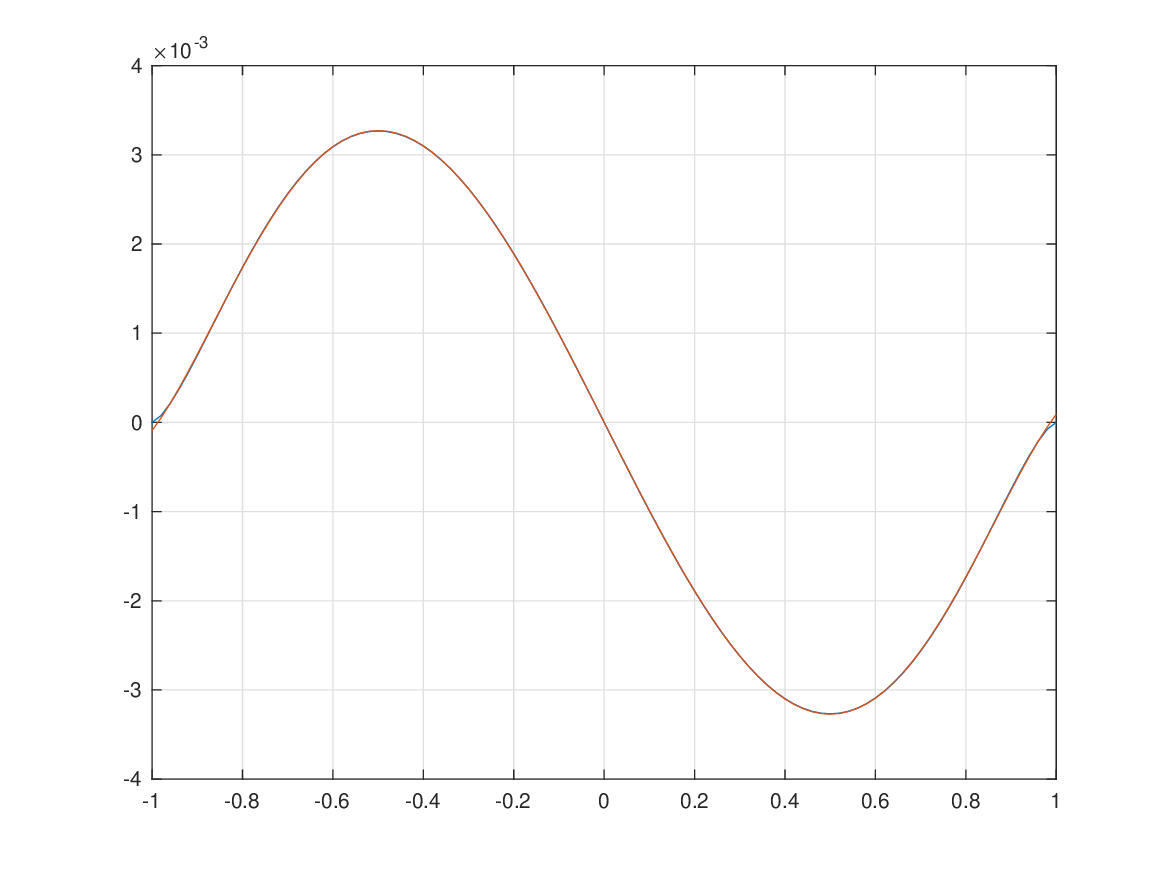} \\ b)}
			\end{minipage}
			\hfill
			\begin{minipage}[h]{0.3\linewidth}
				\center{\includegraphics[width=1\linewidth]{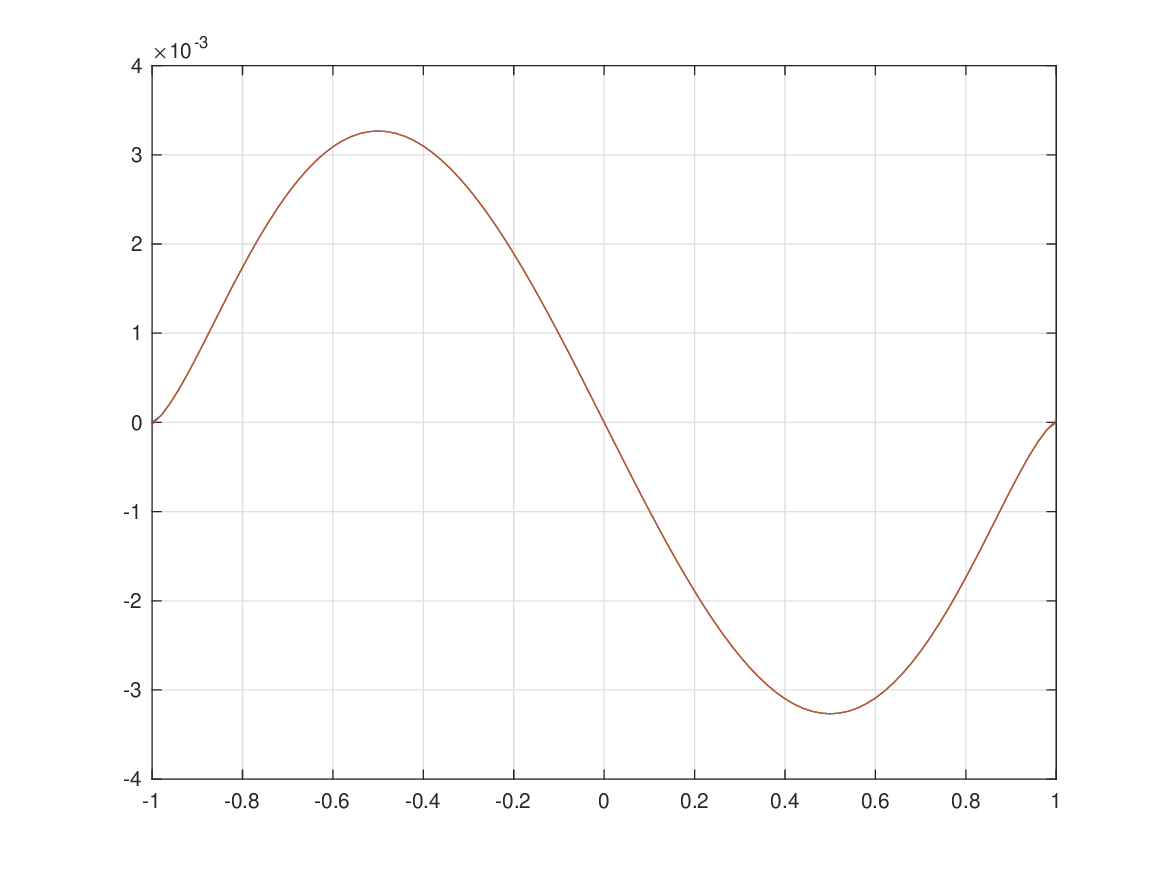} \\ c)}
			\end{minipage}
			\caption{Recovery of the derivative $f^{(1)}_1$  with  random noise in the  input data. Approximation to $f^{(1)}_1$ for    $\delta= 10^{-4}$ (Fig.  a) ),    for $\delta= 10^{-5}$ (Fig.b) ) and $\delta= 10^{-6}$ (Fig.  c) ) }
			\label{Fig1}
		\end{figure}

		\begin{figure}[h!]
			\begin{minipage}[h]{0.3\linewidth}
				\center{\includegraphics[width=1\linewidth]{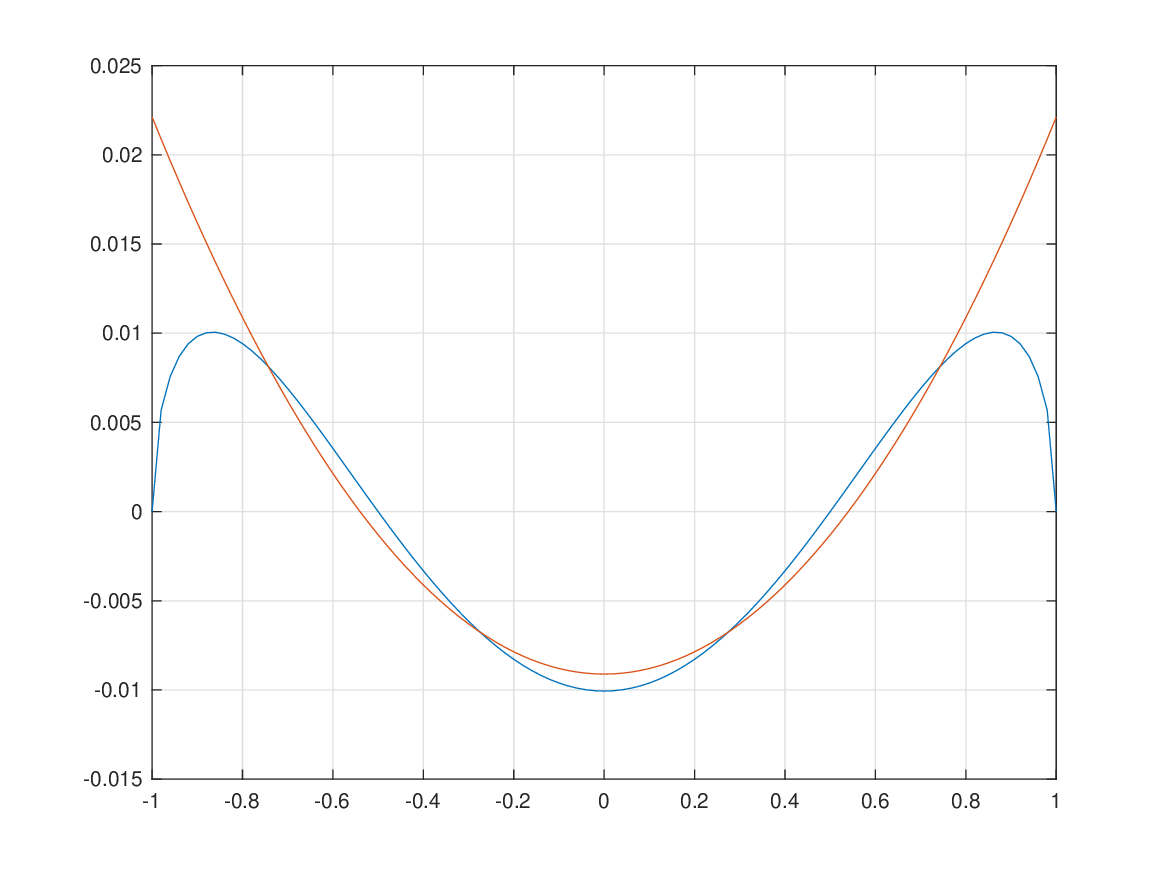} \\ a)}
			\end{minipage}
			\begin{minipage}[h]{0.3\linewidth}
				\center{\includegraphics[width=1\linewidth]{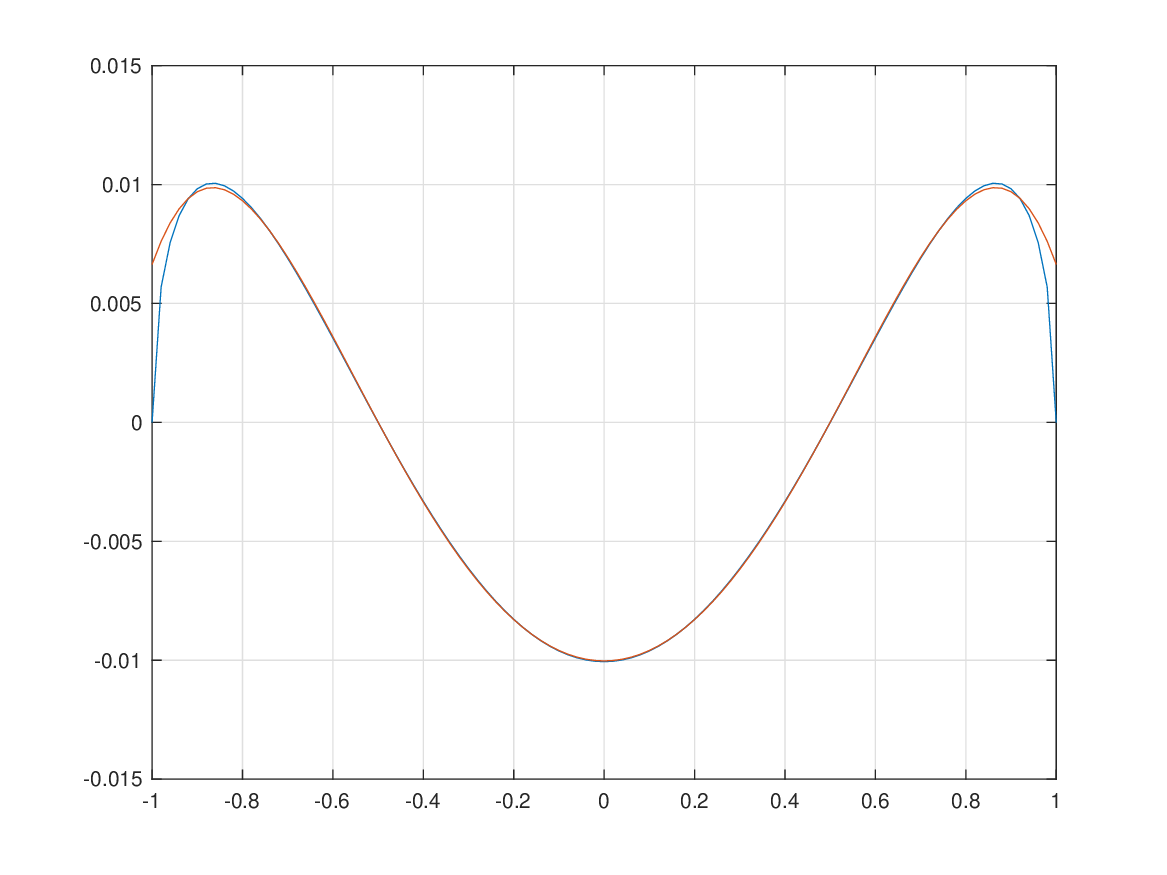} \\ b)}
			\end{minipage}
			\hfill
			\begin{minipage}[h]{0.3\linewidth}
				\center{\includegraphics[width=1\linewidth]{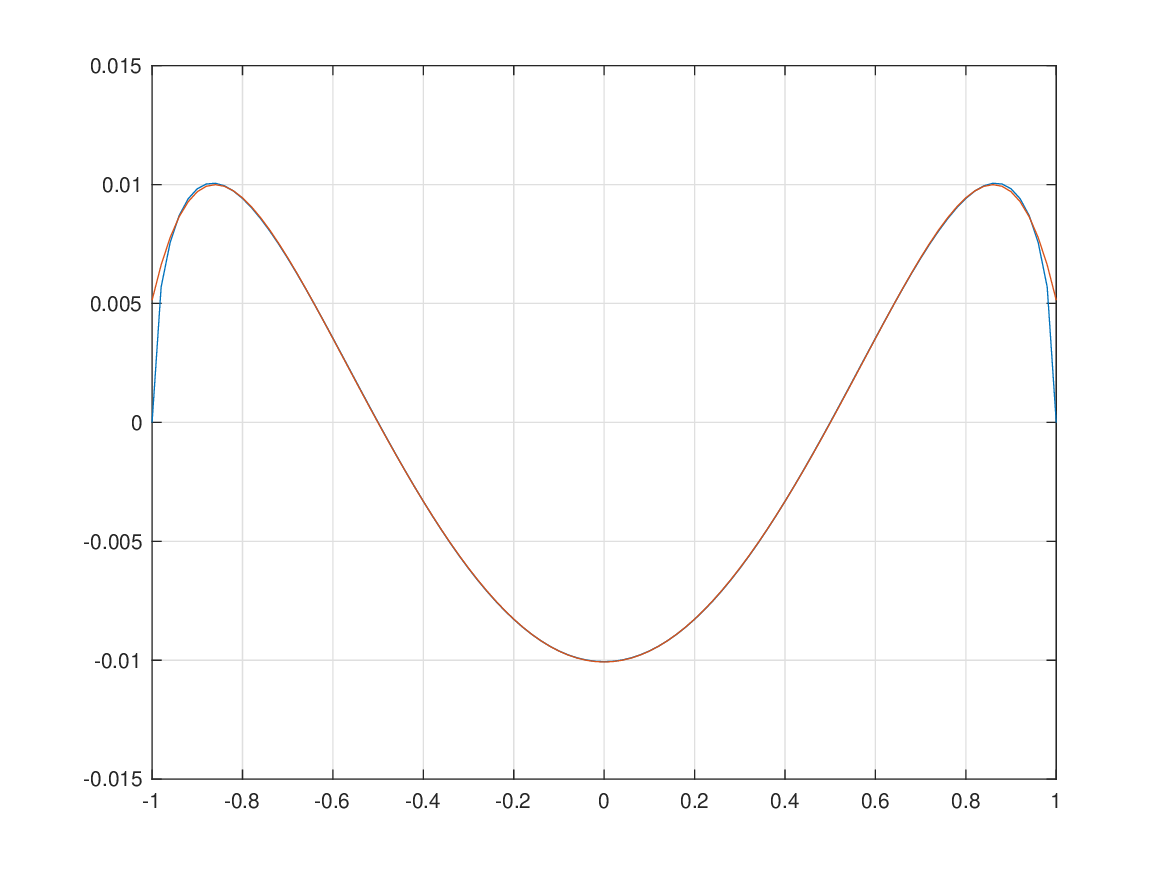} \\ c)}
			\end{minipage}
			\caption{Recovery of the derivative $f^{(2)}_1$  with  random noise in the  input data . Approximation to $f^{(2)}_1$ for    $\delta= 10^{-4}$ (Fig.  a) ),    for $\delta= 10^{-5}$ (Fig. b) ) and $\delta= 10^{-6}$ (Fig.  c) ) }
			\label{Fig2}
		\end{figure}


		
		\subsection{Example 2}
		
		Now we take the function $f_2(t) = c|t|$ with $c=1/7$.
		It is easy to see that  $\|f_2\|_{s,\mu}\approx 1$ for $\mu = 2$ and $s=2$.
		As follows from our theoretical results, for $f_2$ only the summation problem is correct.
		
		Just as in the previous example, for our experiment we take the Fourier-Chebyshev coefficients perturbed by random noise
		with noisy levels $\delta= 10^{-2}, 10^{-3}, 10^{-4}$.  In Table \ref{tbl3} and Figure \ref{Fig3} are given
		the corresponding numerical results.
		\begin{table}[h!]
			\centering
			\caption{ The results of summation $f_2$ for random noise }
			\label{tbl3}
			\begin{tabular}{|c|c|c|c|}
				\hline
				$\delta$ & $10^{-2}$ & $10^{-3}$ & $10^{-4}$  \\ \hline
				ErrorL2        &  $0.0021 $       &  $0.00045 $  &  $0.0002 $  \\ \hline
				ErrorC        &  $0.16 $       &  $0.0314 $  &  $0.024 $  \\ \hline
				$N$  &  8      & 29   & 49   \\ \hline
			\end{tabular}
		\end{table}

		\begin{figure}[h!]
			\begin{minipage}[h]{0.3\linewidth}
				\center{\includegraphics[width=1\linewidth]{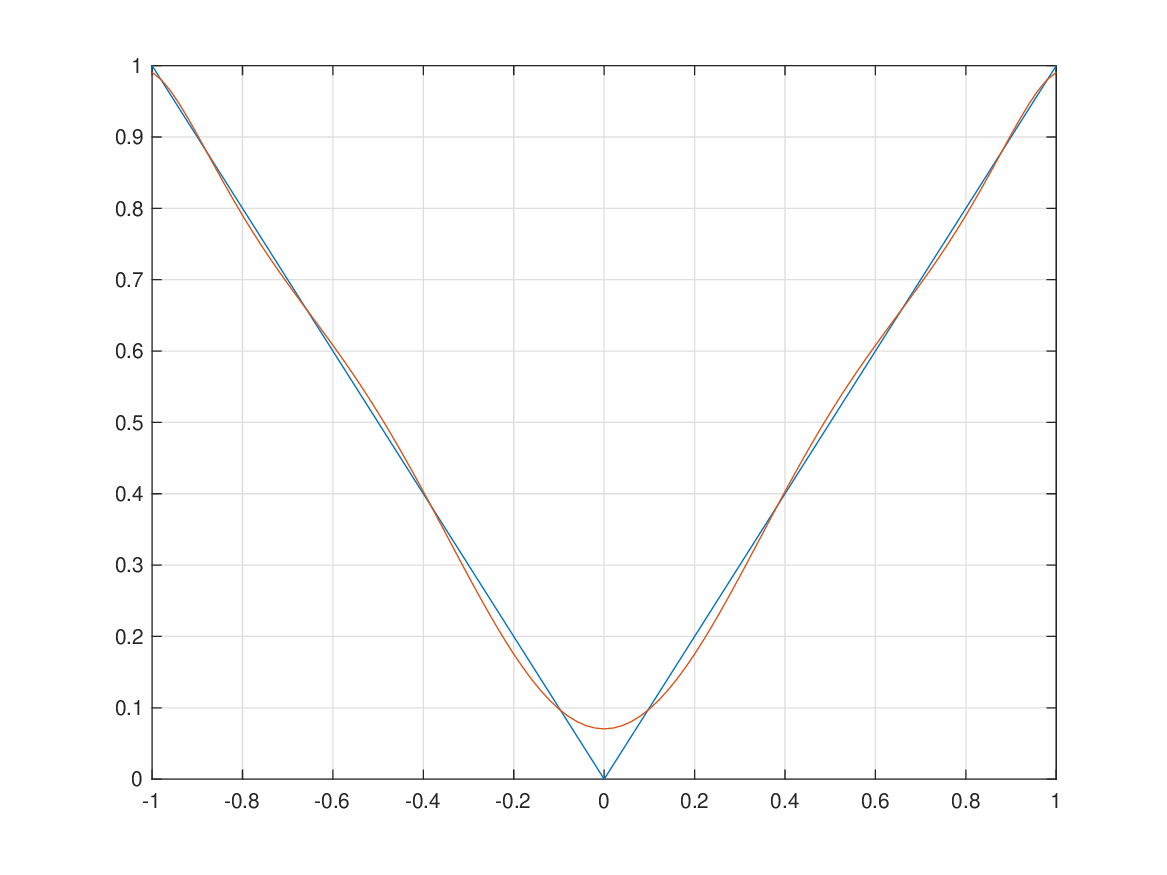} \\ a)}
			\end{minipage}
			\begin{minipage}[h]{0.3\linewidth}
				\center{\includegraphics[width=1\linewidth]{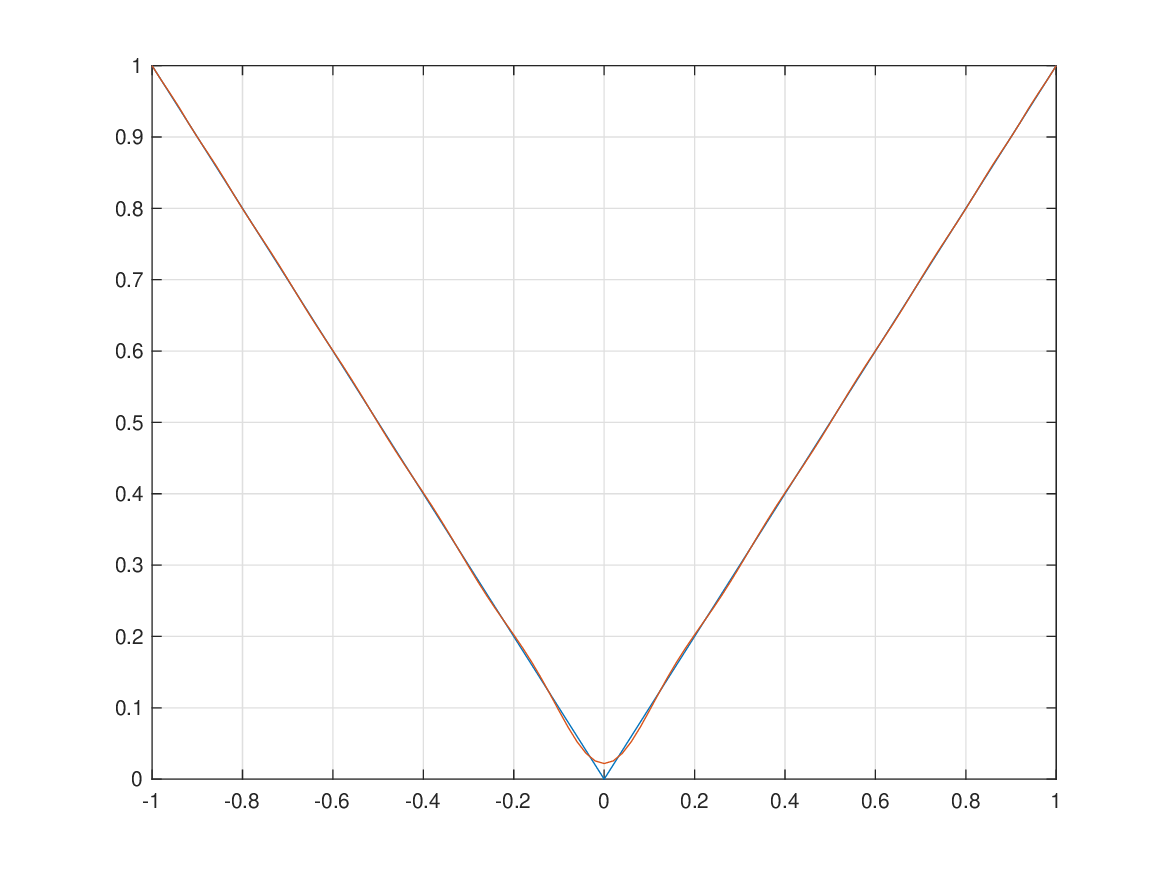} \\ b)}
			\end{minipage}
			\begin{minipage}[h]{0.3\linewidth}
				\center{\includegraphics[width=1\linewidth]{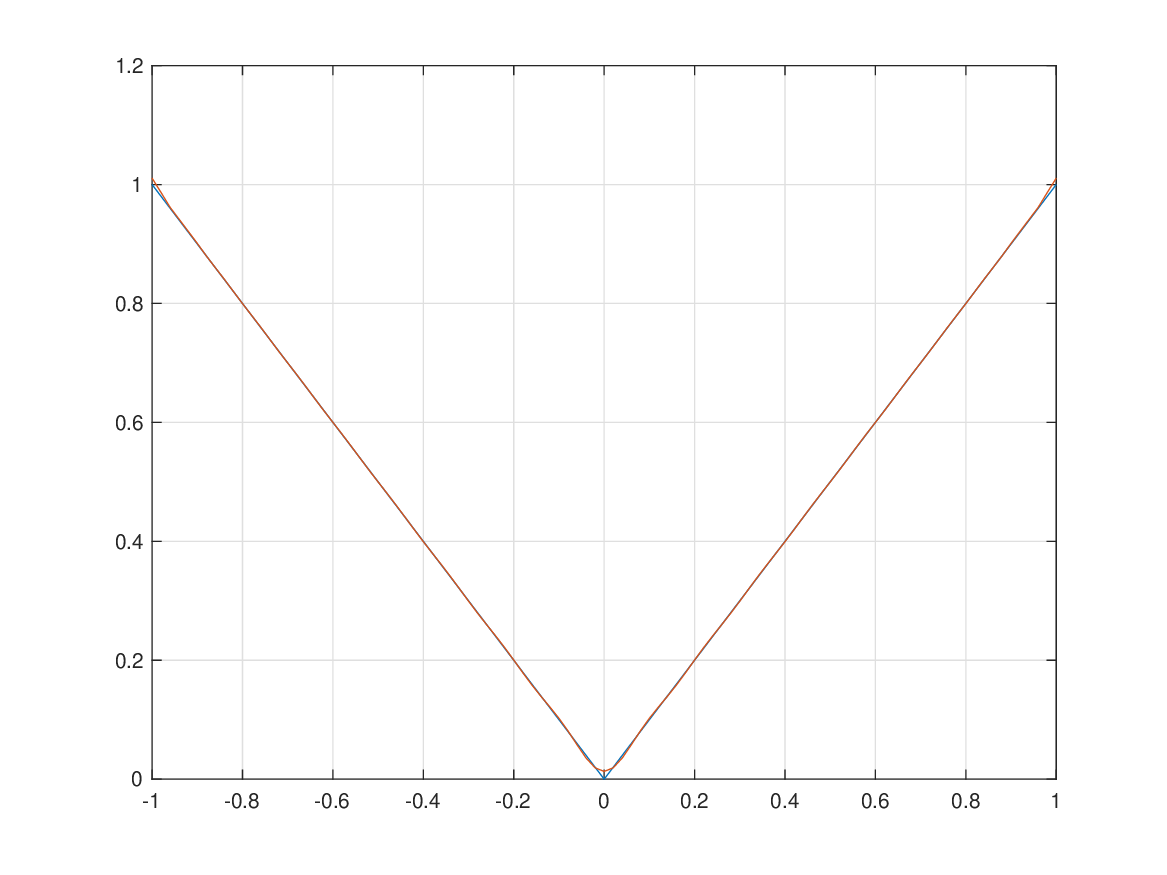} \\ c)}
			\end{minipage}
			\caption{Summation of $f_2$  with  random noise in the  input data for    $\delta= 10^{-2}$ (Fig.  a) ),
				for $\delta= 10^{-3}$ (Fig. b) ) and $\delta= 10^{-4}$ (Fig.  c) ) }
			\label{Fig3}
		\end{figure}
		
		\subsection{Example 3}
		
		Let's test the method (\ref{ModVer}) when recovering the second derivative of an analytic function.
		We consider the function $ f_3(t)=c\, t\, \sin(t\, \pi/2 )$ with  $c=1/1580$.
		For our calculation we take $\mu= 6.5$, $s=2$ and evaluate the effectiveness of numerical recovering
		$f_3^{(2)}$ with 2 types of perturbations.
		Namely, we simulate noise in two ways, firstly, in a random way, as it was done in Example 1, and secondly,
		by applying a quadrature formula. As such a formula, we will take the Clenshaw-Curtis quadrature,
		according to which the Chebyshev-Fourier coefficients  (see \cite{MasHand}) for considered weight function are calculated as follows
			$$
			\langle  f_3, T_k \rangle
			\approx \frac{\pi}{n} \Big(\frac{f_3(t_0)T_k(t_0)}{2}+\frac{f_3(t_n)T_k(t_n)}{2}+\sum_{j=1}^{n-1}f_3(t_j)T_k(t_j)\Big),
			$$
where  $t_j = \cos\frac{j\pi}{n},\, j=0,\ldots,n$ are the zeros of $(1-t^2)U_{n-1}(t)$ and
$U_n$ are second-kind Chebyshev polynomials.
Here, the parameter of interpolation, $n$, is chosen
to provide a $\delta$-perturbation of the input data in the metric $\ell_2$.
This condition is verified using error estimation for the Clenshaw-Curtis quadrature (see, e.g., \cite{XiangBorne_2012}).
As is known, this choice of nodes $\{t_j\}$ is the most economical in terms of computational expenses
without reducing the accuracy of the approximation.
		The quantity of nodes is involved in the approximation indicated in the column "n".
		
		Tables \ref{tbl4} and \ref{tbl5} show the results of approximation  of $f_3^{(2)}$ by the truncation method (\ref{ModVer})
		with both types of perturbations.
		The Figure \ref{Fig5} show the approximations  to $f_3^{(2)}$ built by input data with the noise from Clenshaw-Curtis quadrature.
		
		As can be seen from the Tables \ref{tbl4} and \ref{tbl5}, we achieve acceptable accuracy for both type of noise.
		But applying  the quadrature formula expands the area of using the proposed method in calculations,
		especially in the situation when the input data is given in the form of a set of function values at grid nodes.
		
		

		\begin{table}[h!]
			\centering
			\caption{ Recovering derivative $f_3^{(2)}$ with random noise  }
			\vspace{2mm}
			
			\label{tbl4}
			\begin{tabular}{|c|c|c|c|}
				\hline
				$\delta$ & $10^{-5}$ & $10^{-6}$ & $10^{-7}$  \\ \hline
				Error L2        &  $2 \cdot 10^{-4}  $       &  $ 8.69\cdot 10^{-6}   $  &  $1.59  \cdot 10^{-7}   $  \\ \hline
				Error C        &  $5 \cdot 10^{-4} $       &  $2.79 \cdot 10^{-5}   $  &  $6  \cdot 10^{-7}   $  \\ \hline
				N  &  6     & 7   & 10    \\ \hline
				
			\end{tabular}
		\end{table}
		

		\begin{table}[h!]
			\centering
			\caption{ Recovering derivative $f^{(2)}_3$ with noise from Clenshaw-Curtis quadrature  }
			\label{tbl5}
			\begin{tabular}{|c|c|c|c|}
				\hline
				$\delta$ & $10^{-5}$ & $10^{-6}$ & $10^{-7}$  \\ \hline
				Error L2        &  $2.25 \cdot 10^{-4}  $       &  $ 5.9\cdot 10^{-6}   $  &  $2.32  \cdot 10^{-7}   $  \\ \hline
				Error C        &  $5.6 \cdot 10^{-4} $       &  $1.9 \cdot 10^{-5}   $  &  $8.3  \cdot 10^{-7}   $  \\ \hline
				N  &  7     & 9 & 13    \\ \hline
				n  &  6     & 9  & 13    \\ \hline
				
			\end{tabular}
		\end{table}

		\begin{figure}[h!]
			\begin{minipage}[h]{0.3\linewidth}
				\center{\includegraphics[width=1\linewidth]{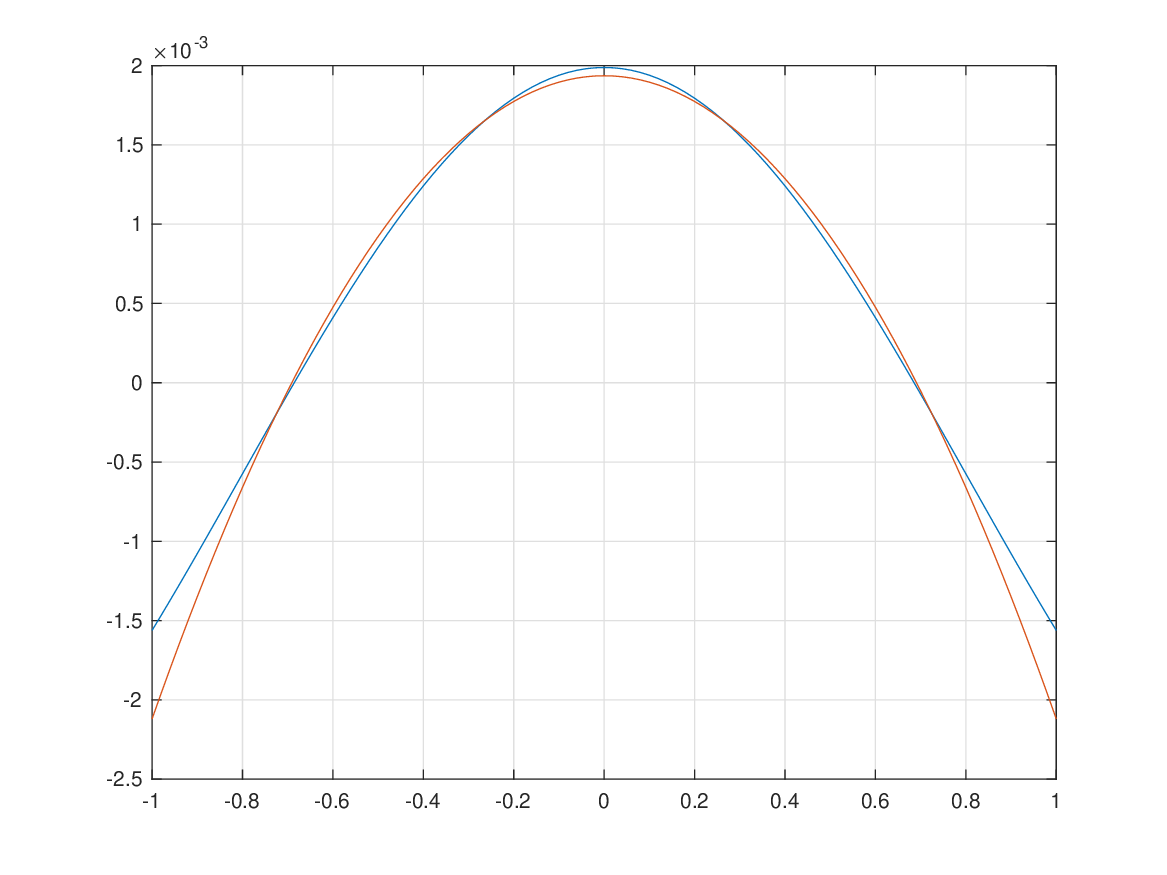} \\ a)}
			\end{minipage}
			\begin{minipage}[h]{0.3\linewidth}
				\center{\includegraphics[width=1\linewidth]{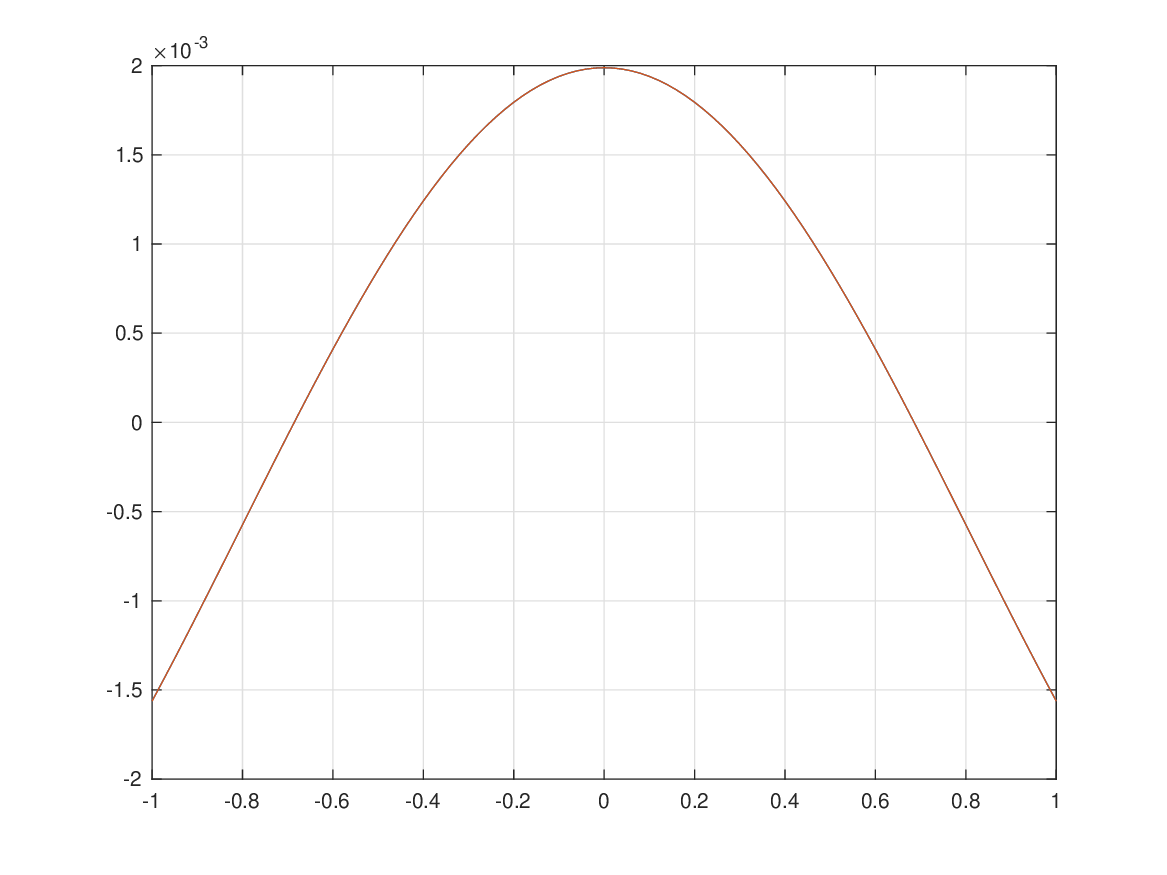} \\ b)}
			\end{minipage}
			\hfill
			\begin{minipage}[h]{0.3\linewidth}
				\center{\includegraphics[width=1\linewidth]{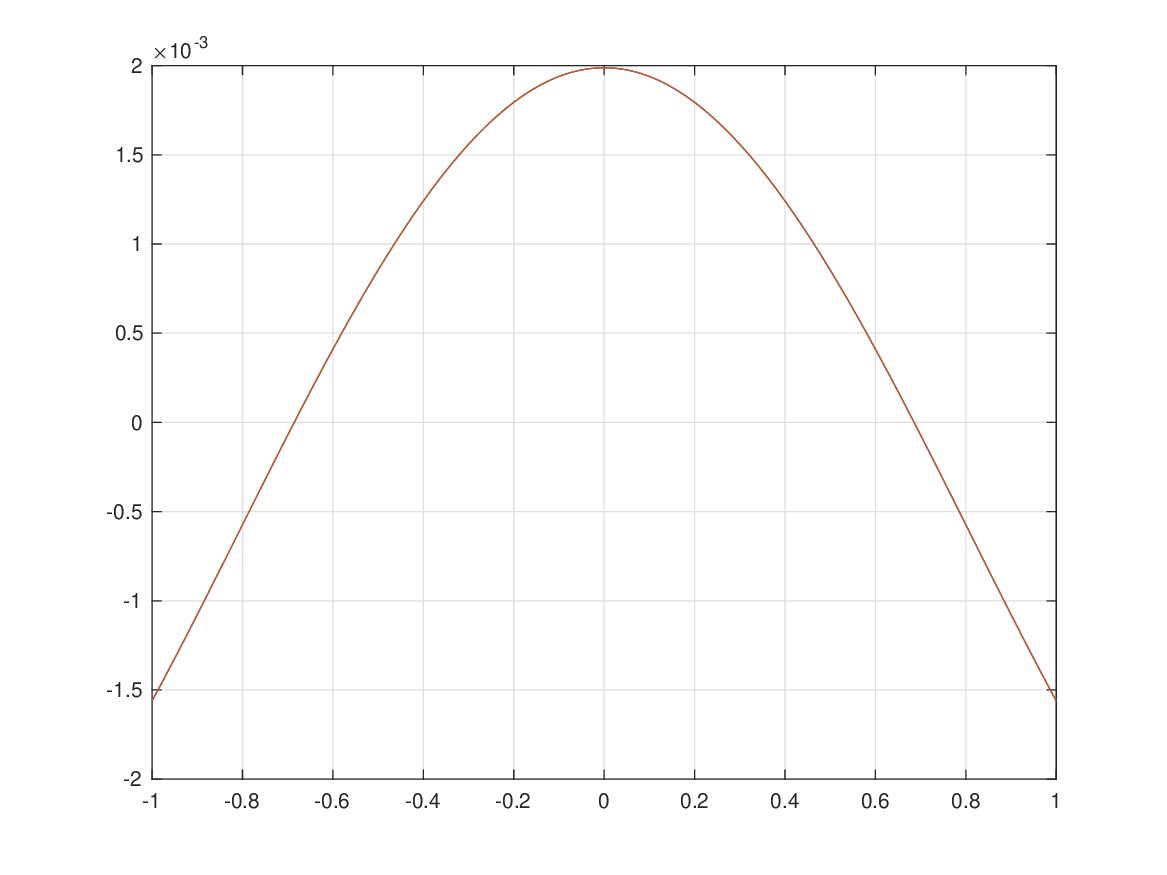} \\ c)}
			\end{minipage}
			\caption{Recovery of the derivative $f_3^{(2)}$  with noise from Clenshaw-Curtis quadrature. Approximation to $f_3^{(2)}$ for    $\delta= 10^{-5}$ (Fig.  a)),  $\delta= 10^{-6}$ (Fig. b)) and $\delta= 10^{-7}$ (Fig.  c)) }
			\vspace{2mm}
			\label{Fig5}
		\end{figure}


\section*{ Acknowledgments and Disclosure of Funding}
We are very grateful to Prof. Oleg Davydov for discussion and suggestion
	which improved the article a lot.

This project has received funding through the MSCA4Ukraine project,
which is funded by the European Union (ID number 1232599).
In addition, the first named author is supported by scholarship of University of L\"ubeck.
Also, the authors acknowledge partial financial support due to the project "Mathematical modelling of complex dynamical systems and processes caused by the state security" (Reg. No. 0123U100853).

\bibliography{ND_Cheb_RDer}

\begin{thebibliography}{10}

\bibitem{Ahn&Choi&Ramm_2006}
Soyoung Ahn, U.~Jin Choi, and Alexander~G. Ramm.
\newblock A scheme for stable numerical differentiation.
\newblock {\em J. Comput. Appl. Math.}, 186(2):325--334, 2006.

\bibitem{And84}
R.~S. Anderssen and F.~R. de~Hoog.
\newblock Finite difference methods for the numerical differentiation of
  non-exact data.
\newblock {\em Computing}, 33(3):259--267, 1984.

\bibitem{Ar2014}
O.~B. Arushanyan and S.~F. Zaletkin.
\newblock The use of chebyshev series for approximate analytic solution of
  ordinary differential equations.
\newblock {\em Moscow University Mathematics Bulletin}, 71(5):212--215, 2016.

\bibitem{COH2007}
Alan Cohen.
\newblock {\em Numerical Methods for Laplace Transform Inversion}, volume~5.
\newblock Springer New York, NY, 01 2007.

\bibitem{Cul71}
Jane Cullum.
\newblock Numerical differentiation and regularization.
\newblock {\em SIAM Journal on Numerical Analysis}, 8(2):254--265, 1971.

\bibitem{Dolgopolova&Ivanov_USSR_Comput_Math_Math_Phys_1966_Eng}
T.F. Dolgopolova and V.K. Ivanov.
\newblock On numerical differentiation.
\newblock {\em Zh. Vychisl. Mat. and Mat. Ph.}, 6(3):570--576, 1966.

\bibitem{EgorKond_1989}
Y.~V. Egorov and V.A. Kondrat'ev.
\newblock On a problem of numerical differentiation.
\newblock {\em Vestnik Moskov. Univ. Ser. I Mat. Mekh.}, 3:80--81, 1989.

\bibitem{Groetsch_1992_V74_N2}
C.~W. Groetsch.
\newblock Optimal order of accuracy in vasin's method for differentiation of
  noisy functions.
\newblock {\em Journal of Optimization Theory and Applications},
  74(2):373--378, 1992.

\bibitem{Hanke&Scherzer_2001_V108_N6}
Martin Hanke and Otmar Scherzer.
\newblock Inverse problems light: Numerical differentiation.
\newblock {\em The American Mathematical Monthly}, 108(6):512--521, 2001.

\bibitem{Lu&Naum&Per}
Shuai Lu, Valeriya Naumova, and Sergei~V. Pereverzev.
\newblock Legendre polynomials as a recommended basis for numerical
  differentiation in the presence of stochastic white noise.
\newblock {\em Journal of Inverse and Ill-Posed Problems}, 21(2):193--216,
  2013.

\bibitem{MasHand}
J.C. Mason and D.C. Handscomb.
\newblock {\em Chebyshev Polynomials}.
\newblock Chapman and Hall/CRC, 2002.

\bibitem{Mathe_Per}
Peter Math{\'e} and Sergei Pereverzyev.
\newblock Stable summation of orthogonal series with noisy coefficients.
\newblock {\em Journal of Approximation Theory}, 118:66--80, 09 2002.

\bibitem{Meng&Zhaoa&Mei&Zhou_2020}
Zehong Meng, Zhenyu Zhao, Duan Mei, and Yongxiong Zhou.
\newblock Numerical differentiation for two-dimensional functions by a fourier
  extension method.
\newblock {\em Inverse Problems in Science and Engineering}, 28(1):126--143,
  2020.

\bibitem{Nakamura&Wang&Wang_2008}
Gen Nakamura, Shengzhang Wang, and Yanbo Wang.
\newblock Numerical differentiation for the second order derivatives of
  functions of two variables.
\newblock {\em Journal of Computational and Applied Mathematics},
  212(2):341--358, 2008.

\bibitem{Os}
K.~Yu. Osipenko.
\newblock Optimal recovery of linear operators in non-euclidean metrics.
\newblock {\em Sb. Math.}, 205:10:1442--1472, 2014.

\bibitem{OsJC}
K.~Yu. Osipenko.
\newblock Optimal recovery of operators and multidimensional carlson type
  inequalities.
\newblock {\em Journal of Complexity}, 32(1):53--73, 2016.

\bibitem{PS1996}
S.V. Pereverzev and S.G. Solodky.
\newblock The minimal radius of galerkin information for the fredholm problem
  of the first kind.
\newblock {\em Journal of Complexity}, 12(4):401--415, 1996.

\bibitem{Pies1972}
R.~Piessens.
\newblock {A New Numerical Method for the Inversion of the Laplace Transform}.
\newblock {\em IMA Journal of Applied Mathematics}, 10(2):185--192, 10 1972.

\bibitem{PoPo}
Kateryna Pozharska and O.A. Pozharskyi.
\newblock Recovery of continuous functions from their fourier coefficients
  known with error.
\newblock {\em Researches in Mathematics}, 28:24, 12 2020.

\bibitem{Qian&Fu&Xiong&Wei_2006}
Zhi Qian, Chu-Li Fu, Xiang-Tuan Xiong, and Ting Wei.
\newblock Fourier truncation method for high order numerical derivatives.
\newblock {\em Appl. Math. Comput.}, 181(2):940--948, 2006.

\bibitem{Qu96}
R.~Qu.
\newblock A new approach to numerical differentiation and integration.
\newblock {\em Mathematical and Computer Modelling}, 24(10):55--68, 1996.

\bibitem{Ramm_1968_No11}
A.G. Ramm.
\newblock On numerical differentiation.
\newblock {\em Izv. Vuzov. Matem.}, 11:131--134, 1968.

\bibitem{RammSmir_2001}
Alexander~G. Ramm and Alexandra~B. Smirnova.
\newblock On stable numerical differentiation.
\newblock {\em Math. Comput.}, 70:1131--1153, 2001.

\bibitem{SSS_CMAM}
Evgeniya~V. Semenova, Sergiy~G. Solodky, and Serhii~A. Stasyuk.
\newblock Application of fourier truncation method to numerical differentiation
  for bivariate functions.
\newblock {\em Computational Methods in Applied Mathematics}, 22(2):477--491,
  2022.

\bibitem{Sem_Sol_ETNA}
Y.~V. Semenova and S.~G. Solodky.
\newblock An optimal method for recovering the mixed derivative $f^{(2,2)}$ of
  bivariate functions.
\newblock {\em Electron. Trans. Numer. Anal.}, 58:582--596, 2023.

\bibitem{Sem_Sol_2021}
Y.V. Semenova and S.G. Solodky.
\newblock Error bounds for fourier-legendre truncation method in numerical
  differentiation.
\newblock {\em Journal of Numerical and Applied Mathematics}, 137:113--130,
  2021.

\bibitem{Mileiko_Solodkii_2014}
Sergei~G. Solodky and Ganna~L. Myleiko.
\newblock The minimal radius of galerkin information for severely ill-posed
  problems.
\newblock {\em Journal of Inverse and Ill-posed Problems}, 22(5):739--757,
  2014.

\bibitem{Sol_Shar}
Sergei~G. Solodky and Kosnazar~K. Sharipov.
\newblock Summation of smooth functions of two variables with perturbed fourier
  coefficients.
\newblock {\em Journal of Inverse and Ill-posed Problems}, 23(3):287--297,
  2015.

\bibitem{Sol_Stas_JC2020}
S.G. Solodky and S.A. Stasyuk.
\newblock Estimates of efficiency for two methods of stable numerical summation
  of smooth functions.
\newblock {\em Journal of Complexity}, 56, 2020.

\bibitem{Sol_Stas_JC2023}
S.G. Solodky and S.A. Stasyuk.
\newblock The minimal radius of galerkin information for the problem of
  numerical differentiation.
\newblock {\em Journal of Complexity}, 81:101787, 2024.

\bibitem{Sol_Stas_UMZ2022}
Serhii~A. Stasyuk and Sergiy~G. Solodky.
\newblock On optimization of methods of numerical differentiation for bivariate
  functions.
\newblock {\em Ukr. Mat. J.}, 74:253--273, 2022.

\bibitem{Tikh1964}
AN~Tikhonov.
\newblock Stable methods for summation of fourier series.
\newblock {\em Doklady Akademii Nauk SSSR}, 156(2):268, 1964.

\bibitem{TAr1977}
Andrey~N. Tikhonov and Vasiliy~Y. Arsenin.
\newblock {\em Solutions of ill-posed problems}.
\newblock V. H. Winston \& Sons, New York, 1977.

\bibitem{Tom}
N.\~G. Tomin.
\newblock An application of the interpolation of linear operators to questions
  of the convergence of series of fourier coefficients with respect to
  classical orthogonal polynomials.
\newblock {\em Dokl. Akad. Nauk SSSR}, 212:5:107--1077, 1973.

\bibitem{TrWW}
J.F. Traub, G.~W. Wasilkowski, and H.~Wozniakowski.
\newblock {\em Information-Based Complexity.}
\newblock New York: Academic Press, 1988.

\bibitem{TrW}
J.F. Traub and H.~Wozniakowski.
\newblock {\em A general theory of optimal algorithms}.
\newblock New York (NY): Academic Press, 1980.

\bibitem{VasinVV_1969_V7_N2}
V.V. Vasin.
\newblock Regularization of the numerical differentiation problem.
\newblock {\em Mat. app. Ural. un-t.}, 7:29--33, 1969.

\bibitem{Wang_Hon_Ch_2006}
Y.~B. Wang, Y.~C. Hon, and J.~Cheng.
\newblock Reconstruction of high order derivatives from input data.
\newblock {\em Journal of Numerical Mathematics}, 14(2):205--218, 2006.

\bibitem{WW2005}
Y.B. Wang and T.~Wei.
\newblock Numerical differentiation for two-dimensional scattered data.
\newblock {\em Journal of Mathematical Analysis and Applications},
  312(1):121--137, 2005.

\bibitem{XiangBorne_2012}
S.~Xiang and F.~Bornemann.
\newblock On the convergence rates of gauss and clenshaw-curtis quadrature for
  functions of limited regularity.
\newblock {\em SIAM Journal on Numerical Analysis}, 50(5):2581--2587, 2012.

\bibitem{Zhao_2010}
Zhenyu Zhao.
\newblock A truncated legendre spectral method for solving numerical
  differentiation.
\newblock {\em International Journal of Computer Mathematics},
  87(14):3209--3217, 2010.

\bibitem{Zhao&Meng&Zhao&You&Xie_2016}
Zhenyu Zhao, Zehong Meng, Liang Zhao, Lei You, and Ou~Xie.
\newblock A stabilized algorithm for multi-dimensional numerical
  differentiation.
\newblock {\em Journal of Algorithms \& Computational Technology},
  10(2):73--81, 2016.

\end{thebibliography}

\end{document}